%
%

\documentclass[12pt]{amsart}
\usepackage{amsmath}	
\usepackage{amssymb}

\textwidth=6.5in \hoffset=-0.75in \textheight=8.75in \voffset=-0.5in

\setlength{\parskip}{ 0.45ex plus 0.2ex minus 0.2ex}

\newtheorem{theorem}{Theorem}[section]
\newtheorem{lemma}[theorem]{Lemma}
\newtheorem{proposition}[theorem]{Proposition}
\newtheorem{corollary}[theorem]{Corollary}

\theoremstyle{remark}

\numberwithin{equation}{section}

\newcommand{\et}{\quad\mbox{and}\quad}

\newcommand{\bQ}{\mathbb{Q}}
\newcommand{\bR}{\mathbb{R}}
\newcommand{\bZ}{\mathbb{Z}}
\newcommand{\cO}{{\mathcal{O}}}
\newcommand{\hlambda}{\widehat{\lambda}}
\newcommand{\norm}[1]{\|\/#1\/\|}

\newcommand{\uu}{\mathbf{u}}
\newcommand{\uv}{\mathbf{v}}
\newcommand{\uw}{\mathbf{w}}
\newcommand{\ux}{\mathbf{x}}

\newcommand{\uy}{\mathbf{y}}
\newcommand{\uz}{\mathbf{z}}
\newcommand{\tbigwedge}{\textstyle{\bigwedge}}

\newcommand{\wdeltax}{\widetilde{\Delta\ux}}
\newcommand{\wddeltax}{\widetilde{\Delta^2\ux}}

\begin{document}

\baselineskip=14.7pt

\title[Simultaneous approximation]
{On simultaneous approximation to\\ a real number, its square, and its cube, II}
\author{Damien Roy}
\address{
   D\'epartement de math\'ematiques\\
   Universit\'e d'Ottawa\\
   150 Louis Pasteur\\
   Ottawa, Ontario K1N 6N5, Canada}
\email{droy@uottawa.ca}
\subjclass[2010]{Primary 11J13; Secondary 11J82}
\keywords{Exponents of Diophantine approximation, minimal points, simultaneous rational approximation, uniform approximation}

\begin{abstract}
In a previous paper with the same title, we gave an upper bound for the exponent
of uniform rational approximation to a quadruple of $\bQ$-linearly independent real 
numbers in geometric progression.  Here, we explain why this upper bound is not 
optimal.  
\end{abstract}

\maketitle

\section{Introduction}
\label{sec:intro}

For each positive integer $n$ and each real number $\xi$, we follow Bugeaud and Laurent
in \cite{BL2005} and denote by $\hlambda_n(\xi)$ the exponent of uniform rational 
approximation to the geometric progression $(1,\xi,\dots,\xi^n)$ of length $n+1$ and 
ratio $\xi$.  This is defined as the supremum of all $\lambda\in \bR$ for which the 
inequalities
\[
 |x_0|\le X, \quad \max_{1\le i\le n} |x_0\xi^i-x_i| \le X^{-\lambda}
\]
admit a non-zero solution $\ux=(x_0,\dots,x_n)\in\bZ^{n+1}$ for each large enough 
real number $X$.   In their 1969 seminal paper \cite{DS1969}, Davenport and Schmidt
established upper bounds for $\hlambda_n(\xi)$ that are independent of $\xi$ when
$[\bQ(\xi):\bQ]>n$, namely when $1,\xi,\dots,\xi^n$ are linearly independent 
over $\bQ$.  Then, using an argument of geometry of numbers, they deduced a result 
of approximation to such $\xi$ by algebraic integers of degree at most $n+1$.  For 
$n=1$ and $n=2$, both estimates are best possible.  For $n=1$, we have 
$\hlambda_1(\xi)=1$ for each $\xi\in \bR\setminus\bQ$, and the corresponding result
of approximation by algebraic integers of degree at most $2$ is best possible as explained
in \cite[\S 1]{DS1969}.  For $n=2$, it is shown in \cite[Theorem 1a]{DS1969} that,
for each $\xi\in\bR$ with $[\bQ(\xi):\bQ]>2$, we have $\hlambda_2(\xi)\le 1/\gamma
\cong 0.618$ where $\gamma=(1+\sqrt{5})/2$ stands for the golden ratio.  In 
\cite{R2004}, we showed that this upper bound is best possible and, in \cite{R2003},
that the corresponding result of approximation by algebraic integers of degree at 
most $3$ is also best possible.  For $n>2$, refined
upper bounds for $\hlambda_n(\xi)$ have been established in \cite{Ba2022,La2003,
PR2022, R2008, Schl2020, Schl2021} but the least upper bound is unknown.  
This paper deals with the case $n=3$.

Let $\lambda_3\cong0.4245$ denote the smallest positive root of $T^2-\gamma^3T+\gamma$
where $\gamma=(1+\sqrt{5})/2$, as above.  In the previous paper \cite{R2008}
with the same title, I proved the following statement.

\begin{theorem}
 \label{intro:thm1}
Let $\xi\in\bR$ with $[\bQ(\xi)\colon\bQ] >3$, and let $c$ and
$\lambda$ be positive real numbers.  Suppose that, for any
sufficiently large value of $X$, the inequalities
\begin{equation}
  \label{intro:thm1:eq}
 |x_0|\le X, \quad
 \max_{1\le i\le 3}|x_0\xi^i-x_i| \le cX^{-\lambda}
\end{equation}
admit a non-zero solution $\ux=(x_0,x_1,x_2,x_3)\in \bZ^4$. Then, we
have $\lambda\le \lambda_3$.  Moreover, if $\lambda=\lambda_3$, then
$c$ is bounded below by a positive constant depending only on $\xi$.
\end{theorem}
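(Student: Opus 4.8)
My plan is to argue by contradiction, establishing both assertions at once: assume that for all sufficiently large $X$ the system \eqref{intro:thm1:eq} has a non-zero solution $\ux\in\bZ^4$, with a pair $(c,\lambda)$ for which either $\lambda>\lambda_3$, or $\lambda=\lambda_3$ and $c$ is smaller than a positive constant $c_0=c_0(\xi)$ to be fixed during the proof, and derive a contradiction. The first step is to pass to the sequence of minimal points $\ux_1,\ux_2,\dots\in\bZ^4$ for the simultaneous approximation of $\uxi:=(1,\xi,\xi^2,\xi^3)$: with $X_i:=|x_{i,0}|$ increasing to $+\infty$, $L_i:=\norm{x_{i,0}\uxi-\ux_i}_\infty$ decreasing to $0$, and $L_i$ the least value of $\norm{x_0\uxi-\ux}_\infty$ over non-zero $\ux$ with $|x_0|<X_{i+1}$. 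Letting $X$ increase to $X_{i+1}$ in \eqref{intro:thm1:eq} turns the hypothesis into $L_i\le c\,X_{i+1}^{-\lambda}$ for all large $i$, while Dirichlet's theorem gives the companion bound $L_i\ll X_{i+1}^{-1/3}$.

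Next I record a determinant estimate: writing $\ux_j=x_{j,0}\uxi+\ue_j$ with $\ue_j$ in the hyperplane $\{y_0=0\}$ and $\norm{\ue_j}\le L_j$, multilinear expansion of $\det(\ux_{i-2},\ux_{i-1},\ux_i,\ux_{i+1})$ — in which every term containing two copies of $\uxi$, and the one with four $\ue$'s, vanishes — gives $|\det(\ux_{i-2},\dots,\ux_{i+1})|\ll X_{i+1}L_{i-2}L_{i-1}L_i$. Since $[\bQ(\xi):\bQ]>3$, no tail of minimal points can lie in a fixed proper rational subspace of $\bR^4$ (that would make $1,\xi,\xi^2,\xi^3$ linearly dependent over $\bQ$), and a short induction on decreasing spans shows that no four consecutive minimal points are dependent for all large $i$; hence this determinant is a non-zero integer, so $X_{i+1}L_{i-2}L_{i-1}L_i\gg 1$ for all large $i$.

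The core is to inject the second-degree exponent through the rigidity of a geometric progression. Each minimal point carries \emph{two} approximations to $(1,\xi,\xi^2)$, its first three and its last three coordinates, which are moreover nearly proportional with ratio $\approx\xi$; a direct computation shows that their cross product $\uw_i:=(x_{i,1}x_{i,3}-x_{i,2}^2,\ x_{i,1}x_{i,2}-x_{i,0}x_{i,3},\ x_{i,0}x_{i,2}-x_{i,1}^2)\in\bZ^3$ satisfies $\norm{\uw_i}\asymp X_iL_i$ yet the \emph{second-order} estimate $|\uw_i\cdot(1,\xi,\xi^2)|\ll L_i^2$, so that $\uw_i$ — which is non-zero, since the minimal points do not lie on the twisted cubic — is a good dual approximation to the relation space of $(1,\xi,\xi^2)$, whence in particular $X_iL_i\gg1$. (Equivalently one may work with the exterior products $\ux_i\wedge\ux_{i+1}\in\tbigwedge^2\bZ^4$, which approximate a fixed $\bQ(\xi)$-rational subspace of $\bR^6$ to second order.) Feeding the Davenport–Schmidt bound $\hlambda_2(\xi)\le 1/\gamma$ — valid here because $[\bQ(\xi):\bQ]>3>2$, and re-expressed by a Jarník-type transference as a bound for the dual second-degree exponent — together with $L_i\le cX_{i+1}^{-\lambda}$, $X_iL_i\gg1$ and $X_{i+1}L_{i-2}L_{i-1}L_i\gg1$, one obtains an asymptotic two-term inequality for the sequence $u_i:=\log X_i$ whose characteristic polynomial is precisely $T^2-\gamma^3T+\gamma$. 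This forces $(u_i)$ to grow at least like the larger zero $\lambda_3'=\gamma/\lambda_3$ of that polynomial, which is consistent with the remaining constraints only when $\lambda\le\lambda_3$; and if $\lambda=\lambda_3$ every inequality used is asymptotically sharp, so the lone integrality input ``$|\det(\ux_{i-2},\dots,\ux_{i+1})|\ge 1$'' propagates through the chain to bound $c$ below in terms of $\xi$.

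The step I expect to be the genuine obstacle is the last one: the naive combinations of these ingredients only yield $\lambda\le 1/\gamma$, or some intermediate value, and extracting the sharp bound $\lambda\le\lambda_3$ demands that the second-order estimate for $\uw_i$ (or for $\ux_i\wedge\ux_{i+1}$) and the duality transfer of $\hlambda_2(\xi)\le 1/\gamma$ be used with exactly the right normalizations, so that the coefficients of the characteristic polynomial come out to $\gamma^3=\gamma^2+\gamma$ and $\gamma$ rather than to anything larger. Two further technical points need attention: the non-degeneracy claim of the second paragraph, where one genuinely uses $[\bQ(\xi):\bQ]>3$ to rule out a persistent linear dependence among consecutive minimal points (without which the determinant inequality fails); and the possibility that some $\uw_i$ is exceptionally small or unusually close to the relevant subspace, which must be shown either not to occur along a full tail or not to damage the recursion. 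Re-running the whole chain with all implied constants made explicit then settles the case $\lambda=\lambda_3$.
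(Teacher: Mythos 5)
This theorem is quoted from \cite{R2008}; the present paper only recalls the proof of the inequality $\lambda\le\lambda_3$ inside Proposition \ref{first:prop}, so that is the argument your proposal must be measured against. Your set-up (minimal points, the translation of the hypothesis into $L_i\ll X_{i+1}^{-\lambda}$, and the observation that $\ux_i^-\wedge\ux_i^+$ gives a quadratic-order dual approximation to $(1,\xi,\xi^2)$ --- this is exactly the map $C$ of Section \ref{sec:CE}) is sound, but the proposal has two genuine gaps.

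First, the claim that ``no four consecutive minimal points are dependent for all large $i$'', and hence that $|\det(\ux_{i-2},\ux_{i-1},\ux_i,\ux_{i+1})|\ge 1$ so that $X_{i+1}L_{i-2}L_{i-1}L_i\gg 1$, is false. The hypothesis $[\bQ(\xi):\bQ]>3$ only guarantees that a \emph{fixed} proper rational subspace contains finitely many minimal points; it does not prevent arbitrarily long runs of consecutive minimal points lying in a common plane or hyperplane. Indeed, between consecutive elements $i<j$ of the set $I$ one has $W_{i+1}=\cdots=W_j$, so all of $\ux_i,\dots,\ux_j$ lie in a two-dimensional subspace, and there is no a priori bound on $j-i$; likewise the three-dimensional spans $U_i$ persist across the complement of $J$. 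The entire machinery of the index sets $I$ and $J$, and of the heights $H(W_i)$, $H(U_i)$ estimated via Schmidt's inequality in Proposition \ref{not:prop}, exists precisely to control when and how often new dimensions appear. Your determinant lower bound is therefore unavailable as stated (compare Corollary \ref{first:cor2}, where such a non-vanishing is obtained only for a carefully chosen quadruple $\ux_h,\ux_i,\ux_j,\ux_{j+1}$ with $h,i,j$ consecutive in $I$ and $i\in J$, and only after the case $\lambda=\lambda_3$ has already been reached).

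Second, the decisive step --- producing a two-term inequality for $\log X_i$ with characteristic polynomial $T^2-\gamma^3T+\gamma$ --- is asserted rather than derived, and you yourself flag it as ``the genuine obstacle''. The actual derivation does not pass through a transference of the Davenport--Schmidt bound $\hlambda_2(\xi)\le 1/\gamma$. It combines (a) the height inequalities $X_iH(U_i)\ll H(W_i)H(W_{i+1})$ and $H(W_j)\ll H(U_i)H(U_j)$, (b) the growth estimates $X_{j+1}\ll X_{i+1}^{\theta}$ and $X_i\ll X_j^{\theta^2-1}$ of Lemma \ref{CE:lemmaG}, and (c) the non-trivial combinatorial input that $I\setminus J$ is infinite for $\lambda>\lambda_2$ (\cite[Corollary 6.3]{R2008}), which supplies infinitely many consecutive quadruples $g<h<i<j$ of $I$ with $h\notin J$ and $i\in J$; only on such quadruples does the chain \eqref{first:prop:eq3}--\eqref{first:prop:eq11} close up into $1\le(\theta-1/\theta)+(\theta-1/\theta)^2$, equivalently $\lambda^2-\gamma^3\lambda+\gamma\ge 0$. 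Nothing in your list of ingredients substitutes for (c), and without it the golden-ratio coefficients do not appear. The final assertion about bounding $c$ from below when $\lambda=\lambda_3$ likewise depends on tracking which of these inequalities become equalities, which presupposes the structure you have not established.
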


In particular, this implies that any $\xi\in\bR$ with $[\bQ(\xi)\colon\bQ] >3$
has $\hlambda_3(\xi)\le \lambda_3$. 

For several years, before the publication of \cite{R2008}, I thought that the 
upper bound $\lambda_3$ for $\lambda$ in Theorem \ref{intro:thm1} could be 
optimal until I realized that it is not.  However, I did not include 
the proof of this as it was only leading to a small improvement over $\lambda_3$.
The goal of this paper is to present that argument in the hope that it will help 
finding the least upper bound.  In fact, we will prove the 
following result.

\begin{theorem}
 \label{intro:thm2}
Under the hypotheses of Theorem \ref{intro:thm1}, we
have $\lambda< \lambda_3$.  
\end{theorem}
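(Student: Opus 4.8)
The plan is to argue by contradiction. Since Theorem~\ref{intro:thm1} already yields $\lambda\le\lambda_3$, it is enough to rule out the equality $\lambda=\lambda_3$. So assume that the inequalities \eqref{intro:thm1:eq} admit a non-zero solution in $\bZ^4$ for every sufficiently large $X$, with $\lambda=\lambda_3$. By the last assertion of Theorem~\ref{intro:thm1} we may moreover assume that $c\ge c_0$ for some constant $c_0=c_0(\xi)>0$, and the whole point is to contradict this lower bound.

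First I would recall the framework of \cite{R2008}: the sequence of minimal points $\ux_1,\ux_2,\dots$ of $\bZ^4$ attached to $(1,\xi,\xi^2,\xi^3)$, with strictly increasing norms $X_i=\norm{\ux_i}$ and strictly decreasing distances $L_i=\max_{1\le j\le 3}|x_{i,0}\xi^j-x_{i,j}|$, together with the elementary relations linking $X_i$, $X_{i+1}$ and $L_i$. On one side the hypothesis forces $L_i\ll X_{i+1}^{-\lambda_3}$; on the other, the geometry of the twisted cubic $t\mapsto(1,t,t^2,t^3)$ — the fact that the three quadratic forms $q_1=x_0x_2-x_1^2$, $q_2=x_0x_3-x_1x_2$, $q_3=x_1x_3-x_2^2$ cannot be simultaneously small at a lattice point lying far from the cubic — supplies matching lower bounds. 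In the borderline case $\lambda=\lambda_3$ these estimates are saturated: running them through the recursion of \cite{R2008} pins the ratios $\log X_{i+1}/\log X_i$ to the value attached to $T^2-\gamma^3T+\gamma$ and pins each $q_k(\ux_i)$ to the smallest size compatible with the lattice, so the configuration becomes asymptotically rigid.

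The heart of the argument is a second-order analysis of this rigid configuration. I would attach to each index $i$ the wedge products $\uy_i=\ux_i\wedge\ux_{i+1}$ and $\uz_i=\ux_i\wedge\ux_{i+1}\wedge\ux_{i+2}$, and a nonnegative defect $\delta_i$ measuring the size of $\uz_i$ relative to $X_iX_{i+1}X_{i+2}$, that is, how far the triple $(\ux_i,\ux_{i+1},\ux_{i+2})$ is from the degenerate (coplanar) configuration demanded by the extremal recursion. Two facts are then combined. First, the $\delta_i$ cannot all become small: if $\uz_i$ vanished for every large $i$ the minimal points would eventually lie in a fixed proper rational subspace, forcing a nontrivial $\bQ$-linear relation among $1,\xi,\xi^2,\xi^3$, contrary to hypothesis; so $\delta_i$ is bounded away from $0$ infinitely often. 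Second, substituting the saturated values of $X_i$, $L_i$ and $q_k(\ux_i)$ into the bilinear identities relating $\uy_i$, $\uy_{i+1}$ and $\uz_i$ shows that a nonzero $\delta_i$ costs a strictly positive amount in one of the relations that were equalities at first order; carried along the sequence this upgrades the hypothesis to $L_i\le\theta\,c\,X_{i+1}^{-\lambda_3}$ for some fixed $\theta\in(0,1)$ and all large $i$. But then, for every large $X$, choosing $i$ with $X_i\le X<X_{i+1}$ the point $\ux_i$ satisfies \eqref{intro:thm1:eq} with $c$ replaced by $\theta c$, so Theorem~\ref{intro:thm1} gives $\theta c\ge c_0$; since the same reasoning applies to any admissible constant, iterating gives $\theta^k c\ge c_0$ for all $k$, which is absurd. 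Hence $\lambda=\lambda_3$ cannot occur and $\lambda<\lambda_3$.

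The step I expect to be the main obstacle is the passage from ``$\delta_i$ not small infinitely often'' to the uniform gain ``$L_i\le\theta c X_{i+1}^{-\lambda_3}$ for \emph{all} large $i$''. A priori the extremal recursion may alternate between two regimes — three consecutive minimal points coplanar versus in general position — and one must exclude an infinite alternation in which the gain extracted from a general-position step is exactly cancelled over the following coplanar steps. I would handle this by showing that neither regime can persist too long, so that the sequence of minimal points breaks into blocks of bounded length over each of which the net loss is bounded below by a positive constant, whence the gains genuinely accumulate. Where this bookkeeping threatens to become intractable, a compactness argument in the spirit of \cite{R2004, R2008} — rescaling the minimal points and letting them, together with $\xi$, converge to a limit configuration that would have to be exactly self-similar at the rate $\lambda_3$ — should close the gap, since one can check directly that no such limit is compatible with the defining relation of $\lambda_3$ and the $\bQ$-linear independence of $1,\xi,\xi^2,\xi^3$.
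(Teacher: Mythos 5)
Your proposal is a plan rather than a proof, and the two steps on which everything hinges are precisely the ones you leave unestablished. First, the claim that ``a nonzero $\delta_i$ costs a strictly positive amount in one of the relations that were equalities at first order'' is asserted, not derived; nothing in your sketch identifies which identity is being perturbed or why the perturbation is one-signed. Second, and more seriously, the passage from a gain at infinitely many indices to the uniform bound $L_i\le\theta c X_{i+1}^{-\lambda_3}$ for \emph{all} large $i$ is exactly the kind of propagation that fails without a concrete mechanism; you acknowledge this yourself and defer to ``bookkeeping'' and an unspecified compactness argument. The paper's own analysis shows why a soft argument cannot close this gap: the limit configuration at $\lambda=\lambda_3$ is \emph{not} visibly incompatible with the constraints at first order. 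One must extract, from the saturation of the exponents, exact vanishing of certain integer determinants (e.g.\ $\det(C_h,C_i)=0$ and $\det(\ux_{g-1},\ux_g,\ux_h,\Psi_\epsilon(\ux_h,\ux_i,\ux_j))=0$, because these integers are forced to have absolute value $\ll X_h^{-\delta}$), convert these into rigid algebraic relations (a strict alternation of $I$ between $J$ and $I\setminus J$, proportionality of all the points $C_{h,g}$ to a single primitive vector $(a,b)$, and convergence of the normalized differences $\wdeltax_i$ to the conic $(s^2,st,t^2)$), and only then compute sharp two-sided asymptotics for $\norm{C_{f,h}}$, $L(C_{f,h})$, $\norm{C_{k,l}}$, $L(C_{k,l})$. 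The contradiction finally comes from two proportional nonzero integer vectors $C_{f,h}=\rho C_{k,l}$ with $\norm{C_{f,h}}<\norm{C_{k,l}}$ forcing $|\rho|<1$ while $L(C_{f,h})>L(C_{k,l})$ forces $|\rho|>1$ --- a comparison of exponents ($0.2735\ldots<0.2915\ldots$ and $-0.3131\ldots>-0.4245\ldots$) that no qualitative self-similarity argument would detect.

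A secondary point: your framing via the defect $\delta_i$ of \emph{arbitrary} consecutive triples $(\ux_i,\ux_{i+1},\ux_{i+2})$ does not match the actual combinatorics. Most consecutive triples of minimal points are coplanar for trivial reasons; the relevant structure is carried by the subsequences indexed by $I$ (where the plane $W_i$ changes) and $J$ (where the $3$-space $U_i$ changes), and the whole difficulty is to pin down the pattern of $J$ inside $I$. Your proposed iteration $\theta^k c\ge c_0$ is a legitimate contradiction scheme in principle, but since the uniform gain feeding it is unproven, the argument as written does not establish the theorem.
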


Using the same method, it is possible to compute an explicit $\epsilon>0$ 
such that $\hlambda_3(\xi)\le \lambda_3-\epsilon$.  I still refrain to do that here 
in order to keep the presentation as simple as possible.   In a further paper, 
I plan to provide more tools to make progress on this problem.  

In the next two sections, we recall most of the results of \cite{R2008} with
some precision added, including the notion of minimal points and the definition of the
important polynomial map $C\colon(\bR^4)^2\to\bR^2$ that was already implicit 
in \cite{DS1969}.  In Section \ref{sec:Psi}, we introduce a new pair of polynomial maps 
$\Psi_-$ and $\Psi_+$ from $(\bR^4)^3$ to $\bR^4$ and we elaborate on their 
analytic and algebraic properties.   
In Sections \ref{sec:first} to \ref{sec:another} we use these tools to study the behavior 
of the minimal points assuming that the hypothesis of Theorem \ref{intro:thm1} 
holds with $\lambda=\lambda_3$.  In each section, we get new algebraic relations 
that link the minimal points.  In Section \ref{sec:algC}, they involve the polynomial
map $C$ and, in Section \ref{sec:another}, the maps $\Psi_\pm$.  In the process, 
we isolate a very rigid structure among the subspaces spanned by consecutive 
minimal points.  Using this, we end up with a contradiction in Section 
\ref{sec:final}, and this proves Theorem \ref{intro:thm2}.   For some of the main 
results that we establish along the way, we indicate weaker conditions on $\lambda$ 
for which they hold, but we omit the proof to keep the paper reasonably short.  
In an addendum, we provide a further algebraic relation
involving another polynomial map with interesting algebraic properties.

%
%

\section{Notation and preliminaries}
\label{sec:not}

The notation is the same as in \cite{R2008}.  We fix a real number $\xi$ with
 $[\bQ(\xi)\colon\bQ] >3$ and a real number $\lambda>0$ which fulfills the 
hypothesis of Theorem \ref{intro:thm1} for some constant $c>0$.  For
shortness, we use the symbols $\ll$ and $\gg$ to denote inequalities involving
multiplicative constants that depend only on $\xi$ and $\lambda$.  We also 
denote by $\asymp$ their conjunction.  As we are not interested in the dependence
on $c$, we consider that $c\asymp 1$, contrary to what is done in \cite{R2008}.

For each integer $n\ge 1$ and each point $\ux=(x_0,x_1,\dots,x_n)\in\bR^{n+1}$,
we define
\begin{gather}
  \ux^- = (x_0,\dots,x_{n-1}), \quad \ux^+ = (x_1,\dots, x_n), \quad
  \Delta\ux = \ux^+ - \xi\ux^-,
  \label{not:eq1}\\
 \norm{\ux}=\max_{0\le i\le n}|x_i| \et L(\ux)=\max_{1\le i\le n}|x_0\xi^i-x_i|.
  \label{not:eq2}
\end{gather}
For each $p=1,\dots,n+1$, we identify $\tbigwedge^p\bR^{n+1}$ with 
$\bR^{\binom{n+1}{p}}$ via an ordering of the Grassmann coordinates as in
\cite[Chap.~I, \S5]{Sc}.  If $n\in\{1,2,3\}$ and if $\ux$ is a non-zero point of 
$\bZ^{n+1}$, then $L(\ux)\neq 0$ and we have
\begin{equation}
  L(\ux) \asymp\norm{\Delta\ux} \asymp \norm{\ux\wedge(1,\xi,\dots,\xi^n)}.
  \label{not:eq3}
\end{equation}

As in \cite[\S2]{R2008}, we fix a sequence $(\ux_i)_{i\ge 1}$ of non-zero
points of $\bZ^4$ with the following properties:
\begin{itemize}
\item[(a)] the positive integers $X_i:=\norm{\ux_i}$ form a strictly
 increasing sequence;
 \item[(b)] the positive real numbers $L_i:=L(\ux_i)$ form a
 strictly decreasing sequence;
 \item[(c)] if some non-zero point $\ux\in\bZ^4$ satisfies $L(\ux)
 < L_i$ for some $i\ge 1$, then $\|\ux\| \ge  X_{i+1}$.
\end{itemize}
This is slightly different than the construction of Davenport and Schmidt 
in \cite[\S4]{DS1969}, but it plays the same role.  In particular, using 
\eqref{not:eq3}, our hypothesis translates into the basic estimate
\begin{equation}
  L_i \asymp\norm{\Delta\ux_i} \ll X_{i+1}^{-\lambda}.
  \label{not:eq4}
\end{equation}
We say that $(\ux_i)_{i\ge 1}$ is a sequence of \emph{minimal points}
for $\xi$ in $\bZ^4$.

For any integer $n\ge 1$, we define the \emph{height} of a non-zero vector 
subspace $V$ of $\bR^n$ defined over $\bQ$ to be 
\begin{equation*}
  H(V) = \norm{\uy_1\wedge\cdots\wedge\uy_p},
\end{equation*}
where $(\uy_1,\dots,\uy_p)$ is any basis of $V\cap\bZ^n$ over $\bZ$.  We also 
set $H(\{0\})=1$.  We now recall some definitions and results from 
\cite[\S3]{R2008} relative to the subspaces $\langle\ux_i,\dots,\ux_j\rangle_\bR$
of $\bR^4$ spanned by consecutive minimal points $\ux_i,\dots,\ux_j$.  They use 
the well-known fact that, since $[\bQ(\xi):\bQ]>3$, any proper subspace of 
$\bR^4$ contains finitely many minimal points (see \cite[Lemma 2.4]{R2008}).

We first recall that each $\ux_i$ is a primitive point of $\bZ^4$, namely a non-zero 
point of $\bZ^4$ whose gcd of the coordinates 
is $1$.  Thus, $\langle\ux_i\rangle_\bR$ has height $X_i$ for each $i\ge 1$, and so
$\langle\ux_i\rangle_\bR \neq \langle\ux_j\rangle_\bR$ for distinct integers $i,j\ge 1$.
For each $i\ge 2$, we define
\[
  W_i = \langle\ux_{i-1},\ux_i\rangle_\bR.
\]
Then $W_i$ has dimension $2$ and the set $I$ of integers $i\ge 2$ for which 
$W_i\neq W_{i+1}$ is infinite.  For each $i\in I$, we define the 
\emph{successor} of $i$ in $I$ to be the smallest element $j$ of $I$ with $j>i$.
We also say that elements $i<j$ of $I$ are \emph{consecutive} in $I$, if $j$ is the
successor of $i$ in $I$.  For such $i$ and $j$, we have
\[
  W_i \neq W_{i+1}=\cdots=W_j \neq W_{j+1},
\]
thus $W_{i+1}=W_j=\langle\ux_i,\dots,\ux_j\rangle_\bR=
\langle\ux_i,\ux_j\rangle_\bR$.  For each $i\in I$, we also define
\[
  U_i = W_i+W_{i+1} = \langle\ux_{i-1},\ux_i,\ux_{i+1}\rangle_\bR.
\]
Then $U_i$ has dimension $3$.  Finally, we define $J$ to be the set of all $i\in I$
for which $U_i\neq U_j$ where $j$ is the successor of $i$ in $I$.  This is an infinite 
subset of $I$.  For each triple of consecutive elements $h<i<j$ of $I$, we have
\[
  U_i = W_{h+1}+W_{i+1} 
       = \langle\ux_h,\ux_i\rangle_\bR + \langle\ux_i,\ux_j\rangle_\bR
      = \langle\ux_h,\ux_i,\ux_j\rangle_\bR,
\]
thus $(\ux_h,\ux_i,\ux_j)$ is a basis of $U_i$.  We also note that 
$U_i = \langle\ux_h,\dots,\ux_j\rangle_\bR$.  Moreover, we have 
$\ux_{j+1}\notin U_i$ if and only if $i\in J$.  The heights of these subspaces 
of $\bR^4$ can be estimated as follows.

\begin{proposition}
\label{not:prop}
$\mathrm{(i)}$ For each $i\ge 2$, the pair $(\ux_{i-1},\ux_i)$ is a basis of
 $W_i\cap\bZ^4$ and we have 
\[
 H(W_i) \asymp X_iL_{i-1} \ll X_i^{1-\lambda}.
\]
\begin{itemize}
\item[$\mathrm(ii)$] For each $i\in I$, we have 
\[
 X_iH(U_i) \ll H(W_i)H(W_{i+1}).
\]
\item[$\mathrm(iii)$] For each pair of consecutive elements $i<j$ of $I$
 with $i\in J$, we have
\[
 H(W_j) \ll H(U_i)H(U_j).
\]
\end{itemize}
\end{proposition}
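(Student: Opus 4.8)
The plan is to deduce all three estimates from the basic inequality \eqref{not:eq4}, the formula \eqref{not:eq3} relating $L(\ux)$ to $\norm{\ux\wedge(1,\xi,\xi^2,\xi^3)}$, and elementary properties of exterior products together with the definition of the height $H(V)$ in terms of a $\bZ$-basis of $V\cap\bZ^n$. For part (i), I would first argue that $(\ux_{i-1},\ux_i)$ is a basis of $W_i\cap\bZ^4$: any primitive point of $\bZ^4$ lying in $W_i$ and with norm smaller than $X_i$ would, by minimality property (c) and the fact that $L(\ux)\asymp\norm{\ux\wedge\uxi}$ (writing $\uxi=(1,\xi,\xi^2,\xi^3)$), force a contradiction with the defining properties (a)--(c) of the minimal points; this is exactly the kind of argument made in \cite[\S3]{R2008}, which I would cite. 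Granting this, $H(W_i)=\norm{\ux_{i-1}\wedge\ux_i}$, and I would estimate this wedge by writing $\ux_i\wedge(1,\xi,\xi^2,\xi^3)$-type expressions: since $\ux_{i-1}$ and $\ux_i$ each lie close to the line $\bR\uxi$, one has $\norm{\ux_{i-1}\wedge\ux_i}\asymp \norm{\ux_{i-1}}\,\norm{\ux_i\wedge\uxi}/\norm{\uxi} + (\text{lower order})$, which after using \eqref{not:eq3} and \eqref{not:eq4} gives $H(W_i)\asymp X_i L_{i-1}\ll X_i\cdot X_i^{-\lambda}$, since $L_{i-1}\ll X_i^{-\lambda}$ by \eqref{not:eq4} applied with index $i-1$ and the fact that $X_i=X_{(i-1)+1}$.

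For part (ii), the idea is the standard "height of a sum" inequality from the geometry of numbers. Since $U_i=W_i+W_{i+1}$ is $3$-dimensional and $W_i\cap W_{i+1}=\langle\ux_i\rangle_\bR$ (as $i\in I$ means $W_i\neq W_{i+1}$, and both contain $\ux_i$), I would invoke the Plücker-type relation
\[
  H(W_i+W_{i+1})\,H(W_i\cap W_{i+1}) \ll H(W_i)\,H(W_{i+1})
\]
valid for rational subspaces (see \cite[Chap.~I]{Sc} or the Brill--Gordan / Bombieri--Vaaler type inequalities used in \cite{DS1969,R2008}), and then observe that $H(W_i\cap W_{i+1})=H(\langle\ux_i\rangle_\bR)=X_i$ because $\ux_i$ is primitive. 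This immediately yields $X_i H(U_i)\ll H(W_i)H(W_{i+1})$. Part (iii) is the same mechanism applied one level up: for consecutive $i<j$ in $I$ with $i\in J$, the spaces $U_i$ and $U_j$ are distinct $3$-dimensional subspaces of $\bR^4$, so $U_i\cap U_j$ is $2$-dimensional and rational; one checks from the description $U_i=\langle\ux_h,\ux_i,\ux_j\rangle_\bR$ and $U_j=\langle\ux_i,\ux_j,\ux_k\rangle_\bR$ (where $h$ precedes $i$ and $k$ succeeds $j$ in $I$) that $U_i\cap U_j\supseteq\langle\ux_i,\ux_j\rangle_\bR=W_j$, hence $U_i\cap U_j=W_j$ by dimension count. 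The same subspace inequality then gives $H(W_j)=H(U_i\cap U_j)\ll H(U_i)H(U_j)$, after discarding the factor $H(U_i+U_j)=H(\bR^4)\asymp 1$.

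The main obstacle is part (i): everything hinges on (a) the combinatorial fact that $(\ux_{i-1},\ux_i)$ really is a $\bZ$-basis of $W_i\cap\bZ^4$ and not merely a $\bR$-basis of $W_i$, and (b) the precise two-term expansion of $\norm{\ux_{i-1}\wedge\ux_i}$ showing that the dominant contribution is $X_i L_{i-1}$ rather than $X_{i-1} L_i$ or a cross term — this requires knowing that $L_{i-1}X_i \gg L_i X_{i-1}$, which follows from $L_i < L_{i-1}$ and $X_{i-1} < X_i$ (properties (a),(b)). Since both of these are established in \cite[\S3]{R2008} under exactly the present hypotheses, I would keep the proof short by citing the relevant lemmas there and only sketching the wedge-product estimate. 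Parts (ii) and (iii) are then essentially formal consequences of a single subspace-height inequality, applied at two consecutive levels of the filtration $\langle\ux_i\rangle\subset W_i\subset U_i\subset\bR^4$.
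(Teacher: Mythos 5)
Your proof follows the paper's own route: part (i) is delegated to \cite[Lemma 3.1]{R2008}, and parts (ii) and (iii) are exactly Schmidt's height inequality $H(V+W)\,H(V\cap W)\ll H(V)\,H(W)$ from \cite[Chap.~I, Lemma 8A]{Sc}, applied to the pair $(W_i,W_{i+1})$ (sum $U_i$, intersection $\langle\ux_i\rangle_\bR$ of height $X_i$) and to the pair $(U_i,U_j)$ (sum $\bR^4$ of height $1$, intersection $W_j$). The only blemish is your displayed heuristic for $\norm{\ux_{i-1}\wedge\ux_i}$, whose dominant term should be $\norm{\ux_i}\,\norm{\ux_{i-1}\wedge\uxi}\asymp X_iL_{i-1}$ rather than $\norm{\ux_{i-1}}\,\norm{\ux_i\wedge\uxi}\asymp X_{i-1}L_i$ --- a pairing you in fact correct yourself in your closing paragraph.
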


Part (i) is \cite[Lemma 3.1]{R2008}.  Part (ii) follows from a general inequality of 
Schmidt from \cite[Chap.~I, Lemma 8A]{Sc} on the basis that 
$W_i$ and $W_{i+1}$ have sum $U_i$ and intersection $\langle\ux_i\rangle_\bR$.  
Part (iii) follows from the same formula upon noting that the sum of $U_i$ and $U_j$ 
is $\bR^4$ with height $1$ and that their intersection is $W_j$.  

Determinants play a crucial role in this theory.  For each integer $n\ge 0$ and each 
choice of $\uy_i=(y_{i,0},\dots,y_{i,n})\in\bR^{n+1}$ for $i=0,\dots,n$, we denote by
$\det(\uy_0,\dots,\uy_n)$ the determinant of the matrix $(y_{i,j})$ whose rows are
$\uy_0,\dots,\uy_n$.  We will need the following formula. 

\begin{lemma}
\label{not:lemma1}
Suppose that $n\ge 1$. Then, for $\uy_0,\dots,\uy_n$ as above, we have
\[
 \det(\uy_0,\dots,\uy_n)
 = \sum_{i=0}^n (-1)^i y_{i,0}
        \det(\Delta\uy_0,\dots,\widehat{\Delta\uy_i},\dots,\Delta\uy_n)
\]
where the hat on $\Delta\uy_i$ in the right hand side indicates that this point is 
omitted from the list.
\end{lemma}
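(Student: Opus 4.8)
The plan is to deduce the identity from a short sequence of elementary column operations followed by one cofactor expansion, which is the most transparent route. Let $M$ be the $(n+1)\times(n+1)$ matrix whose rows are $\uy_0,\dots,\uy_n$, so that $\det(\uy_0,\dots,\uy_n)=\det M$, and let $C_0,\dots,C_n$ be its columns, where $C_j=(y_{0,j},\dots,y_{n,j})^{\mathrm t}$ for $j=0,\dots,n$. The observation that drives the argument is that, for each $i$, the coordinates of $\Delta\uy_i=\uy_i^+-\xi\uy_i^-$ are $y_{i,j}-\xi y_{i,j-1}$ for $j=1,\dots,n$; hence the matrix whose rows are $\Delta\uy_0,\dots,\Delta\uy_n$ is the $(n+1)\times n$ matrix with columns $C_1-\xi C_0,\ C_2-\xi C_1,\ \dots,\ C_n-\xi C_{n-1}$.

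First I would transform $M$ by the column operations that, for $j=n,n-1,\dots,1$ \emph{taken in this decreasing order}, replace the column of index $j$ by (column $j$)$\,-\xi\,($column $j-1)$. Performing them from right to left is the one point that requires attention: when the column of index $j$ is altered, the column of index $j-1$ is still the original $C_{j-1}$, so the operation produces exactly $C_j-\xi C_{j-1}$; carrying out the operations in the opposite order would instead introduce spurious powers of $\xi$. Since each such operation leaves the determinant unchanged, the resulting matrix $\widetilde M$ has $\det\widetilde M=\det M$, its column of index $0$ is $C_0=(y_{0,0},\dots,y_{n,0})^{\mathrm t}$, and its columns of indices $1,\dots,n$ form precisely the matrix whose rows are $\Delta\uy_0,\dots,\Delta\uy_n$.

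Then I would expand $\det\widetilde M$ along the column of index $0$. For each $i\in\{0,\dots,n\}$, deleting the row of index $i$ and the column of index $0$ from $\widetilde M$ leaves the $n\times n$ matrix whose rows are $\Delta\uy_0,\dots,\widehat{\Delta\uy_i},\dots,\Delta\uy_n$, and the cofactor sign attached to the entry $y_{i,0}$ is $(-1)^i$. Collecting the $n+1$ terms of this expansion gives
\[
 \det M=\sum_{i=0}^n(-1)^i y_{i,0}\,\det\bigl(\Delta\uy_0,\dots,\widehat{\Delta\uy_i},\dots,\Delta\uy_n\bigr),
\]
which is the desired formula. There is no genuine difficulty in this proof; as an alternative, one could argue by induction on $n$ by expanding $\det(\uy_0,\dots,\uy_n)$ directly along its first column and regrouping, but that route carries more sign bookkeeping than the column-operation argument.
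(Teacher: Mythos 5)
Your proof is correct and is essentially the paper's argument in different clothing: the paper applies the determinant-one linear map $\uy\mapsto(y_0,\Delta\uy)$ to each row and expands along the first column, which is exactly the effect of your right-to-left column operations followed by the same cofactor expansion.
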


\begin{proof}
The linear map $\varphi\colon\bR^{n+1}\to\bR^{n+1}$ sending each 
$\uy=(y_0,\dots,y_n)\in\bR^{n+1}$ to 
\[
 \varphi(\uy)=(y_0,y_1-\xi y_0,\dots,y_n-\xi y_{n-1})=(y_0,\Delta\uy)
\]
has determinant $1$.  Thus, 
the square matrix with rows $\uy_0,\dots,\uy_n$ has the same determinant
as that with rows $\varphi(\uy_0),\dots,\varphi(\uy_n)$.  The result follows 
by expanding the determinant of this matrix along its first column.
\end{proof}

The formula of Lemma \ref{not:lemma1} yields the standard estimate
\begin{equation}
\label{not:eq5}
  |\det(\uy_0,\dots,\uy_n)|
  \ll \sum_{i=0}^n \norm{\uy_i}L(\uy_0)\cdots\widehat{L(\uy_i)}\cdots L(\uy_n) 
\end{equation}
for any choice of $\uy_0,\dots,\uy_n\in\bR^{n+1}$ with $n\le 3$.  We add the 
condition $n\le 3$ so that the implicit constant in \eqref{not:eq5} is independent
of $n$.  In this paper, we will need finer estimates of the following form.

\begin{corollary}
\label{not:cor}
Let $n\in\{1,2,3\}$ and let $\uy_0,\dots,\uy_n$ be linearly independent elements
of $\bZ^{n+1}$.  We have 
\[
 |\det(\uy_0,\dots,\uy_n)|
 \asymp \norm{\uy_n}\, |\det(\Delta\uy_0,\dots,\Delta\uy_{n-1})|
\]
if $L(\uy_n)<1$ and if the $n$ products $\norm{\uy_i}L(\uy_0)\cdots
\widehat{L(\uy_i)}\cdots L(\uy_n)$ with $i=0,\dots,n-1$ are smaller than some 
positive function $\delta$ of $\xi$.
\end{corollary}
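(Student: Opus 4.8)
The plan is to prove Corollary \ref{not:cor} as a refinement of the estimate \eqref{not:eq5} by carefully tracking the dominant term in Lemma \ref{not:lemma1}. First I would apply Lemma \ref{not:lemma1} directly:
\[
 \det(\uy_0,\dots,\uy_n)
 = \sum_{i=0}^n (-1)^i y_{i,0}
        \det(\Delta\uy_0,\dots,\widehat{\Delta\uy_i},\dots,\Delta\uy_n).
\]
The claim is that, under the stated smallness hypotheses, the term $i=n$ dominates, so that the sum is $\asymp y_{n,0}\det(\Delta\uy_0,\dots,\Delta\uy_{n-1})$. Since $\uy_n\in\bZ^{n+1}$ is non-zero with $L(\uy_n)<1$, its first coordinate satisfies $|y_{n,0}|\asymp\norm{\uy_n}$ (indeed $L(\uy_n)<1$ forces $|y_{n,0}\xi^i-y_{n,i}|<1$, hence $|y_{n,i}|\ll|y_{n,0}|+1\ll|y_{n,0}|$, using that $y_{n,0}\neq 0$ because otherwise $\uy_n$ would have all coordinates of absolute value $<1$ hence be zero). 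So the target term has size $\asymp\norm{\uy_n}\,|\det(\Delta\uy_0,\dots,\Delta\uy_{n-1})|$, and it remains to show the other terms are negligible.

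Next I would bound, for each $i\in\{0,\dots,n-1\}$, the term $|y_{i,0}|\,|\det(\Delta\uy_0,\dots,\widehat{\Delta\uy_i},\dots,\Delta\uy_n)|$. We have $|y_{i,0}|\le\norm{\uy_i}$, and the $(n\times n)$-determinant, being a sum of products of entries of the vectors $\Delta\uy_k$ for $k\neq i$, is $\ll\prod_{k\neq i}\norm{\Delta\uy_k}\ll\prod_{k\neq i}L(\uy_k)$ by \eqref{not:eq3} (for $k=0,\dots,n-1$) and by $L(\uy_n)<1$ so $\norm{\Delta\uy_n}\ll L(\uy_n)<1$ for the factor $k=n$; thus this term is $\ll\norm{\uy_i}L(\uy_0)\cdots\widehat{L(\uy_i)}\cdots L(\uy_n)$, which is exactly the $i$-th quantity assumed to be $<\delta$. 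So each of these $n$ terms is $O(\delta)$. I must compare this with the main term, i.e.\ show $\norm{\uy_n}\,|\det(\Delta\uy_0,\dots,\Delta\uy_{n-1})|\gg 1$. Here I would use that $\uy_0,\dots,\uy_n$ are \emph{linearly independent} in $\bZ^{n+1}$, so $|\det(\uy_0,\dots,\uy_n)|\ge 1$; combining with the expansion and the fact that every term other than the $i=n$ one is $O(\delta)$, we get $\norm{\uy_n}\,|\det(\Delta\uy_0,\dots,\Delta\uy_{n-1})|\ge 1-O(\delta)\ge 1/2$ for $\delta$ small enough depending only on $\xi$ (and the absolute implicit constants, which are uniform in $n\le 3$). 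With this lower bound in hand, the $n$ error terms of size $O(\delta)$ are each at most a small multiple of the main term, so the whole sum is $\asymp\norm{\uy_n}\,|\det(\Delta\uy_0,\dots,\Delta\uy_{n-1})|$, as desired. Fixing $\delta$ to make, say, the total error at most half the main term gives the function $\delta$ of $\xi$ in the statement.

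The main obstacle is purely bookkeeping: one must be careful that the implicit constants in \eqref{not:eq3}, \eqref{not:eq5}, and in the trivial bound on an $n\times n$ determinant are all uniform for $n\in\{1,2,3\}$, so that $\delta$ can be chosen depending only on $\xi$ (and not, say, on an unbounded number of terms); the restriction $n\le 3$ in \eqref{not:eq5} was imposed precisely for this. Apart from that, the argument is a direct manipulation of Lemma \ref{not:lemma1}: isolate the $i=n$ term, show $|y_{n,0}|\asymp\norm{\uy_n}$, bound the remaining $n$ terms by the hypothesized small quantities, and use integrality ($|\det(\uy_0,\dots,\uy_n)|\ge 1$) to pin down the main term from below so that the errors can be absorbed.
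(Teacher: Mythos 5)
Your proposal is correct and follows essentially the same route as the paper: expand via Lemma \ref{not:lemma1}, isolate the $i=n$ term, bound the remaining $n$ terms by the hypothesized products, use $|\det(\uy_0,\dots,\uy_n)|\ge 1$ to absorb these errors, and convert $|y_{n,0}|$ to $\norm{\uy_n}$ via $L(\uy_n)<1$. The paper merely packages the absorption step slightly differently, by taking $\delta=1/(2nc)$ so that the error is at most $1/2\le |d|/2$ directly.
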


\begin{proof}
Put $d=\det(\uy_0,\dots,\uy_n)$. Lemma \ref{not:lemma1} yields
\[
 \big| d - (-1)^n\uy_{n,0}\det(\Delta\uy_0,\dots,\Delta\uy_{n-1}) \big|
  \le c\sum_{i=0}^{n-1} \norm{\uy_i}L(\uy_0)\cdots\widehat{L(\uy_i)}\cdots L(\uy_n)
\]
for some $c=c(\xi)>0$.  Since $d$ is a non-zero integer, we have $|d|\ge 1$.  So, if
the conditions of the corollary are fulfilled with $\delta=1/(2nc)$, we obtain 
\[
 \big| d - (-1)^n\uy_{n,0}\det(\Delta\uy_0,\dots,\Delta\uy_{n-1}) \big|
  \le 1/2 \le |d|/2,
\]
and the result follows since the condition $L(\uy_n)<1$ implies that $\norm{\uy_n}
\asymp |y_{n,0}|$.
\end{proof}

We also recall that \eqref{not:eq5} generalizes to
\begin{equation}
\label{not:eq6}
  \norm{\uy_0\wedge\cdots\wedge\uy_p}
  \ll \sum_{i=0}^p \norm{\uy_i}L(\uy_0)\cdots\widehat{L(\uy_i)}\cdots L(\uy_p) 
\end{equation}
for any choice of $\uy_0,\dots,\uy_p\in\bR^{n+1}$ with $0\le p\le n\le 3$.
We conclude with the following estimates from \cite[Lemma 2.1]{R2008}.

\begin{lemma}
\label{not:lemma3}
Let $C\in\bZ^2$ and $\ux\in\bZ^{n+1}$ with $n\in\{1,2,3\}$.  Then
$\uy=C^+\ux^--C^-\ux^+\in\bZ^n$ satisfies
\[
 \norm{\uy}\ll\norm{\ux}L(C)+\norm{C}L(\ux) \et L(\uy)\ll \norm{C}L(\ux).
\]
\end{lemma}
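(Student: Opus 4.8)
\emph{Proof plan.} The plan is to reduce both inequalities to the basic estimate $\norm{\Delta\ux}\asymp L(\ux)$ of \eqref{not:eq3}, by writing down two algebraic identities relating the bilinear operation $(C,\ux)\mapsto\uy=C^+\ux^--C^-\ux^+$ to the difference operator $\Delta$. Write $C=(C_0,C_1)$, so that $C^-=C_0$, $C^+=C_1$, $\Delta C:=C^+-\xi C^-$, and $L(C)=|C_0\xi-C_1|=|\Delta C|$; we may assume $\ux\neq 0$, the statement being trivial otherwise. The first identity is
\[
 \uy=(\Delta C)\,\ux^- - C^-\,\Delta\ux ,
\]
which follows from $\uy+C^-\Delta\ux=C^+\ux^--C^-\ux^++C^-(\ux^+-\xi\ux^-)=(C^+-\xi C^-)\ux^-$. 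By the triangle inequality $\norm{\uy}\le|\Delta C|\,\norm{\ux^-}+|C^-|\,\norm{\Delta\ux}$, and then $\norm{\ux^-}\le\norm{\ux}$, $|\Delta C|=L(C)$, $|C^-|\le\norm{C}$, together with $\norm{\Delta\ux}\ll L(\ux)$ from \eqref{not:eq3}, give $\norm{\uy}\ll\norm{\ux}L(C)+\norm{C}L(\ux)$, which is the first estimate.

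For the second estimate, the key remark is that $\Delta$ commutes with this operation, in the sense that
\[
 \Delta\uy = C^+(\Delta\ux)^- - C^-(\Delta\ux)^+ .
\]
Indeed $\Delta$ is linear and $C^\pm$ are scalars, so $\Delta\uy=C^+\Delta(\ux^-)-C^-\Delta(\ux^+)$, while the identities $\Delta(\ux^-)=(\Delta\ux)^-$ and $\Delta(\ux^+)=(\Delta\ux)^+$ hold directly from the definitions in \eqref{not:eq1}. Consequently $\norm{\Delta\uy}\le 2\norm{C}\norm{\Delta\ux}\ll\norm{C}L(\ux)$, and applying \eqref{not:eq3} to $\uy\in\bZ^{n}$ (valid for $n\in\{2,3\}$; the cases $\uy=0$ or $n=1$, where $L(\uy)=0$, being immediate) yields $L(\uy)\asymp\norm{\Delta\uy}\ll\norm{C}L(\ux)$.

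I do not expect a genuine obstacle here: once the two displayed identities are written down, each estimate is a one-line computation whose only substantive input is \eqref{not:eq3}. The only mildly non-routine point is spotting the identities themselves --- the content being that the leading part $(\Delta C)\,\ux^-$ of $\uy$ has size $\asymp L(C)\norm{\ux}$ while the error part $C^-\Delta\ux$ has size $\ll\norm{C}L(\ux)$, and that applying $\Delta$ annihilates the leading part and merely reshuffles the error. This is exactly what makes the product $\uy$ a useful device for producing small integer points in one dimension lower.
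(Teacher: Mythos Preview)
Your proof is correct. The paper does not provide its own proof of this lemma; it simply cites \cite[Lemma~2.1]{R2008}. Your two identities $\uy=(\Delta C)\,\ux^- - C^-\,\Delta\ux$ and $\Delta\uy=C^+(\Delta\ux)^- - C^-(\Delta\ux)^+$ are exactly the natural decomposition, and the estimates then follow immediately from \eqref{not:eq3} together with your handling of the degenerate cases $\uy=0$ and $n=1$.
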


%
%

\section{The maps $C$ and $E$}
\label{sec:CE}

For each non-zero point $\ux$ of $\bR^4$, we define
\[
 V(\ux) =\langle \ux^-,\ux^+\rangle_\bR \subseteq \bR^3.
\]
We also define a polynomial map $C\colon\bR^4\times\bR^4\to\bR^2$ by 
\[
 C(\ux,\uy)=\big(\det(\ux^-,\ux^+,\uy^-),\, \det(\ux^-,\ux^+,\uy^+)\big)
\]
and note that, for a given $(\ux,\uy)\in\bR^4\times\bR^4$, we have 
\begin{equation}
\label{CE:eq:Cneq0}
 C(\ux,\uy)\neq 0 \ \Longleftrightarrow \ 
  \big(\, \dim V(\ux)=2 \ \text{and}\ V(\uy)\not\subseteq V(\ux)\, \big).
\end{equation}
Since $C$ is quadratic in its first argument, there is a unique tri-linear map
$E\colon(\bR^4)^3\to\bR^2$ such that 
\begin{equation}
\label{CE:eq:defE}
  E(\uw,\ux,\uy)=E(\ux,\uw,\uy) \et E(\ux,\ux,\uy)=2C(\ux,\uy)
\end{equation}
for each choice of $\uw,\ux,\uy\in\bR^4$.  It is given by
\[
 E(\uw,\ux,\uy)=\big( \det(\uw^-,\ux^+,\uy^-) - \det(\uw^+,\ux^-,\uy^-),\, 
     \det(\uw^-,\ux^+,\uy^+) - \det(\uw^+,\ux^-,\uy^+)\big).
\]
Besides \eqref{CE:eq:defE}, we note that this map satisfies
\begin{equation}
\label{CE:eq:Eyxy}
  E(\ux,\uy,\uy)=E(\uy,\ux,\uy)=-C(\uy,\ux)
\end{equation}
for each $(\ux,\uy)\in\bR^4\times\bR^4$.

The following result uses the operator $\Delta$ defined in \eqref{not:eq1}.  We write
$\Delta^2$ to denote its double iteration.  Thus, for a point $\ux\in\bR^4$,
we have $\Delta^2\ux=\Delta(\Delta\ux)$.  We also denote by $\Delta\ux^-$ 
the vector $\Delta(\ux^-) = (\Delta\ux)^-$, omitting parentheses.  Similarly,
$\Delta\ux^+$ stands for $\Delta(\ux^+) = (\Delta\ux)^+$.

\begin{lemma}
\label{CE:lemmaC}
For any $\ux=(x_0,\dots,x_3)$ and $\uy=(y_0,\dots,y_3)$ in $\bR^4$, we have
\begin{align*}
 C(\ux,\uy)^- &=x_0\det(\Delta^2\ux,\Delta\uy^-)
                        + y_0\det(\Delta\ux^-,\Delta^2\ux) + \cO(L(\ux)^2L(\uy)),\\[2pt]
 C(\ux,\uy)^+ &=x_0\det(\Delta^2\ux,\Delta\uy^+)
                        + y_0\xi\det(\Delta\ux^-,\Delta^2\ux) + \cO(L(\ux)^2L(\uy)),\\[2pt]
 \Delta C(\ux,\uy) &=x_0\det(\Delta^2\ux,\Delta^2\uy) + \cO(L(\ux)^2L(\uy)).
\end{align*}
\end{lemma}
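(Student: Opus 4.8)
The plan is to work directly from the definitions of $C$ and the operator $\Delta$, and to reduce everything to Lemma~\ref{not:lemma1} applied to $3\times 3$ determinants. The key observation is that each of the four scalar entries appearing in $C(\ux,\uy)^-$, $C(\ux,\uy)^+$ is a determinant of the form $\det(\ux^-,\ux^+,\uz)$ with $\uz\in\{\uy^-,\uy^+\}$, and that $\Delta C(\ux,\uy)$ is obtained by forming the corresponding combination of such determinants. So the first step is to expand a generic determinant $\det(\ux^-,\ux^+,\uz)$ using Lemma~\ref{not:lemma1} (with $n=2$), writing it as $\sum_{i=0}^{2}(-1)^i (\,\cdot\,)_{i,0}\det(\cdots)$ where the omitted argument runs over $\Delta\ux^-,\Delta\ux^+,\Delta\uz$ and the coefficients are the initial coordinates of $\ux^-,\ux^+,\uz$.

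Next I would discard the terms that are $\cO(L(\ux)^2L(\uy))$. By the estimate \eqref{not:eq6} (or directly by multilinearity of the determinant together with $\norm{\Delta\ux}\asymp L(\ux)$ from \eqref{not:eq3}), any $2\times 2$ determinant built from two of $\Delta\ux^-,\Delta\ux^+,\Delta\uz$ is $\ll L(\ux)L(\uy)$ when it involves a $\Delta\uz$ factor, and $\ll L(\ux)^2$ when it involves both $\Delta\ux$-type rows; since the coefficients $x_1-\xi x_0=\Delta\ux_{\cdot}$ and similar are themselves $\ll L(\ux)$, the terms multiplied by a coordinate of $\ux^+$ rather than of $\ux^-$ (or involving a $\Delta\uz$ together with a $\Delta\ux^{\pm}$ row) all collapse into the error term. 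Concretely, after replacing $\ux^+$ by $\xi\ux^-+\Delta\ux$ and $\uy^+$ by $\xi\uy^-+\Delta\uy$ wherever convenient, one is left with the single main term $x_0\det(\Delta\ux^+-\xi\Delta\ux^-,\ \Delta\uz)=x_0\det(\Delta^2\ux,\Delta\uz)$ plus a term proportional to $z_0$, namely $\pm z_0\det(\Delta\ux^-,\Delta\ux^+)=\pm z_0\det(\Delta\ux^-,\Delta^2\ux)$, again up to $\cO(L(\ux)^2L(\uy))$. Carrying this out with $\uz=\uy^-$ gives the first formula; with $\uz=\uy^+$, one uses additionally $y_1=\xi y_0+(\Delta\uy)_0$ so that the coefficient $z_0$ of the second-type term becomes $\xi y_0$ up to a harmless error, giving the second formula.

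For the third formula, I would compute $\Delta C(\ux,\uy)=C(\ux,\uy)^+-\xi\,C(\ux,\uy)^-$ by subtracting the first two displayed expressions. The $y_0$-terms cancel exactly, since the second formula carries $y_0\xi\det(\Delta\ux^-,\Delta^2\ux)$ and $\xi$ times the first carries $\xi y_0\det(\Delta\ux^-,\Delta^2\ux)$. What remains is $x_0\big(\det(\Delta^2\ux,\Delta\uy^+)-\xi\det(\Delta^2\ux,\Delta\uy^-)\big)+\cO(L(\ux)^2L(\uy))$, and by linearity in the second argument this equals $x_0\det(\Delta^2\ux,\Delta\uy^+-\xi\Delta\uy^-)=x_0\det(\Delta^2\ux,\Delta^2\uy)$ up to the stated error, as required.

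I expect the main obstacle to be purely bookkeeping: keeping track of which coordinate ($x_0$ versus a coordinate of $\ux^+$, $y_0$ versus a coordinate of $\uy^+$) multiplies which $2\times 2$ minor, and verifying uniformly that every term not listed is genuinely $\cO(L(\ux)^2L(\uy))$ rather than merely $\cO(L(\ux)L(\uy))$ — this requires noticing that whenever a coordinate of $\ux^+$ (of size $\ll \norm{\ux}$, not $\ll L(\ux)$) appears as a coefficient, the accompanying minor must contain two $\Delta$-rows coming from $\ux$, hence is $\ll L(\ux)^2/\norm{\ux}$ after accounting for the $x_0\asymp\norm{\ux}$ normalization, or else the coefficient can be rewritten modulo $\xi x_0$ and absorbed. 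Once the error bookkeeping is organized, the cancellation of the $y_0$-terms in forming $\Delta C$ is immediate and the three identities drop out together.
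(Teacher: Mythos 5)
Your proposal is correct and follows essentially the same route as the paper: expand the $3\times3$ determinants via Lemma~\ref{not:lemma1}, absorb the term carrying a coordinate of $\Delta\ux$ as a coefficient into the error $\cO(L(\ux)^2L(\uy))$, rewrite $y_1=\xi y_0+\cO(L(\uy))$ for the $\uy^+$ case, and obtain the third identity by taking $\Delta$ of the first two. The paper merely performs the row operation $\ux^+\mapsto\Delta\ux$ inside the determinant before invoking Lemma~\ref{not:lemma1}, whereas you apply the lemma first and then substitute $x_1=\xi x_0+(\Delta\ux)_0$; this is a cosmetic difference only.
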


\begin{proof}
For any choice of sign $\epsilon$, we have
\[
 C(\ux,\uy)^\epsilon = \det(\ux^-,\ux^+,\uy^\epsilon) 
  = \det(\ux^-,\Delta\ux,\uy^\epsilon).
\]
Thus, Lemma \ref{not:lemma1} gives
\[
 C(\ux,\uy)^\epsilon  
  = x_0\det(\Delta^2\ux,\Delta\uy^\epsilon) 
     + (\uy^\epsilon)_0\det(\Delta\ux^-,\Delta^2\ux) + \cO(L(\ux)^2L(\uy)),
\]
where $(\uy^-)_0=y_0$ and $(\uy^+)_0=y_1=y_0\xi+\cO(L(\uy))$.  This explains 
the first two formulas.  The last one follows from them by definition of $\Delta$.
\end{proof}

The above estimates have the following immediate consequence.

\begin{corollary}
\label{CE:corC}
For any $\ux,\uy\in\bR^4$, we have
\[
 \norm{C(\ux,\uy)} \ll \norm{\ux}L(\ux)L(\uy) + \norm{\uy}L(\ux)^2
 \et
 L(C(\ux,\uy)) \ll \norm{\ux}L(\ux)L(\uy).
\]
\end{corollary}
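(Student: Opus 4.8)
The proof of Corollary~\ref{CE:corC} will be a direct consequence of the three estimates in Lemma~\ref{CE:lemmaC} together with the basic bound $|\det(\uz_0,\dots,\uz_{n-1})|\le n!\,\norm{\uz_0}\cdots\norm{\uz_{n-1}}$ for determinants of size $n\le 3$. The plan is to bound each determinant appearing on the right-hand sides of the three displayed formulas in Lemma~\ref{CE:lemmaC} by a product of norms of the vectors $\Delta\ux^-$, $\Delta\ux^+$, $\Delta^2\ux$, $\Delta\uy^-$, $\Delta\uy^+$, $\Delta^2\uy$, and then to compare these norms with $L(\ux)$ and $L(\uy)$.

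First I would record the elementary inequalities $\norm{\Delta\ux^-}\le\norm{\Delta\ux}$, $\norm{\Delta\ux^+}\le\norm{\Delta\ux}$, and $\norm{\Delta^2\ux}\le 2\norm{\Delta\ux}$, all of which follow from \eqref{not:eq1}; combined with \eqref{not:eq3} (applicable since $n=3$) this gives $\norm{\Delta^j\ux^\epsilon}\ll L(\ux)$ for $j\in\{1,2\}$ and either sign $\epsilon$, and likewise for $\uy$. Substituting these into the two formulas for $C(\ux,\uy)^\pm$ in Lemma~\ref{CE:lemmaC}, the term $x_0\det(\Delta^2\ux,\Delta\uy^\epsilon)$ is $\ll\norm{\ux}L(\ux)L(\uy)$ since $|x_0|\le\norm{\ux}$; the term $y_0\det(\Delta\ux^-,\Delta^2\ux)$ (and its variant with an extra factor $\xi$) is $\ll\norm{\uy}L(\ux)^2$; and the error term $\cO(L(\ux)^2L(\uy))$ is absorbed into $\norm{\ux}L(\ux)L(\uy)$ because $L(\ux)\ll\norm{\ux}$ for any non-zero integer point (or more simply into $\norm{\uy}L(\ux)^2$ if one prefers, using $L(\uy)\ll\norm{\uy}$). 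Taking the maximum of the two coordinates yields the first inequality of the corollary.

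For the second inequality, I would apply the third formula of Lemma~\ref{CE:lemmaC}, namely $\Delta C(\ux,\uy)=x_0\det(\Delta^2\ux,\Delta^2\uy)+\cO(L(\ux)^2L(\uy))$. The main term is $\ll\norm{\ux}L(\ux)L(\uy)$ by the norm bounds above, and the error term is again $\ll\norm{\ux}L(\ux)L(\uy)$ since $L(\ux)\ll\norm{\ux}$; hence $\norm{\Delta C(\ux,\uy)}\ll\norm{\ux}L(\ux)L(\uy)$. By \eqref{not:eq3} one has $L(C(\ux,\uy))\asymp\norm{\Delta C(\ux,\uy)}$ when $C(\ux,\uy)$ is a non-zero integer point, and the estimate is trivially true otherwise, so $L(C(\ux,\uy))\ll\norm{\ux}L(\ux)L(\uy)$ in all cases.

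There is no serious obstacle here: the only point requiring a little care is the bookkeeping that lets the common error term $\cO(L(\ux)^2L(\uy))$ be dominated by the displayed main terms, which rests on the observation that $L(\uz)\ll\norm{\uz}$ holds for every non-zero $\uz\in\bR^{n+1}$ with $n\le 3$ (immediate from \eqref{not:eq2}, since each $|z_0\xi^i-z_i|\le\norm{\uz}(1+|\xi|+|\xi|^2+|\xi|^3)$). Everything else is an application of Lemma~\ref{CE:lemmaC} and the trivial determinant bound, so the corollary indeed follows immediately, as claimed in the text.
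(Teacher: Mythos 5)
Your proof is correct and follows the paper's intended route: the paper states Corollary~\ref{CE:corC} as an immediate consequence of Lemma~\ref{CE:lemmaC} with no further detail, and your argument supplies exactly the bookkeeping that is left implicit (trivial determinant bounds, $\norm{\Delta^j\uz^\epsilon}\ll L(\uz)$, and $L(\uz)\ll\norm{\uz}$ to absorb the error terms). The only cosmetic point is that \eqref{not:eq3} is stated in the paper for non-zero points of $\bZ^{n+1}$, whereas you apply it to arbitrary real vectors; the inequalities you actually use ($\norm{\Delta\uz}\ll L(\uz)$, and for $n=1$ the identity $L(\uz)=\norm{\Delta\uz}$) hold for all real $\uz$ by the same elementary telescoping, so nothing is lost.
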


For shortness, we write
\begin{equation}
\label{CE:eq:Cij}
 V_i=V(\ux_i) \et C_{i,j}=C(\ux_i,\ux_j)
\end{equation}
for each pair of positive integers $i$ and $j$.  Then we have the following 
non-vanishing result.

\begin{lemma}
\label{CE:lemmaCneq0}
Suppose that $\lambda > \sqrt{2}-1 \cong 0.4142$.  There is an integer $i_0\ge 1$
with the following properties.
\begin{itemize}
\item[\textrm{(i)}]  
  We have $\dim V_i=2$ and $V_i\neq V_{i+1}$ for any integer $i\ge i_0$.
\item[\textrm{(ii)}] 
  For any integer $i\ge i_0$ and any non-zero $\uy\in\bZ^3$, there is a choice 
  of signs $\epsilon$ and $\eta$ for which the integer
  $\det(\ux_i^\epsilon, \ux_{i+1}^\eta, \uy)$ is non-zero.
\item[\textrm{(iii)}] 
  For any pair of consecutive elements $i<j$ of $I$ with $i\ge i_0$, 
  the four points $C_{i,i+1}$, $C_{i,j}$, $C_{j,j-1}$ and $C_{j,i}$ are 
  all non-zero, and $C_{i,j}=bC_{i,i+1}$ for some non-zero integer $b$ 
  with $|b|\asymp X_j/X_{i+1}$.
\end{itemize}
\end{lemma}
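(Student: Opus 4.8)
The plan is to treat the three assertions in order, each time translating a geometric non-vanishing condition into an estimate via \eqref{CE:eq:Cneq0}, Corollary \ref{CE:corC}, and the height bounds of Proposition \ref{not:prop}. For part (i), recall from \cite[Lemma 2.4]{R2008} that every proper rational subspace of $\bR^4$ contains only finitely many minimal points. If $\dim V_i\le 1$ then $\ux_i^-$ and $\ux_i^+$ are proportional, which forces $\ux_i$ to lie in a fixed plane of $\bR^4$ (the one spanned by $(1,\xi,0,0)$-type directions); since $[\bQ(\xi):\bQ]>3$ this can happen for only finitely many $i$. Likewise, if $V_i=V_{i+1}$ then the $3$-dimensional space $U$ they would span is too small, so $\ux_{i-1},\ux_i,\ux_{i+1}$ would all lie in a common plane of $\bR^3$; again this is possible only finitely often. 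Choosing $i_0$ past all these exceptional indices gives (i).

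For part (ii), I would argue by contradiction: if for some $i\ge i_0$ and some non-zero $\uy\in\bZ^3$ all four determinants $\det(\ux_i^\epsilon,\ux_{i+1}^\eta,\uy)$ vanish, then $\uy$ is orthogonal (in the exterior-algebra sense) to each of $\ux_i^-\wedge\ux_{i+1}^-$, $\ux_i^-\wedge\ux_{i+1}^+$, $\ux_i^+\wedge\ux_{i+1}^-$, $\ux_i^+\wedge\ux_{i+1}^+$. Using $\ux_i^+=\xi\ux_i^-+\Delta\ux_i$ and similarly for $\ux_{i+1}$, these four conditions are equivalent to $\uy$ annihilating the $2$-dimensional (by (i), since $V_i\ne V_{i+1}$ makes the relevant rank large) space $V_i+V_{i+1}$, i.e. $\uy$ spans a fixed rational line. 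But then the plane $V(\uy)^\perp$ would contain $\ux_i^-,\ux_i^+,\ux_{i+1}^-,\ux_{i+1}^+$, placing $\ux_i,\ux_{i+1}$ in a proper rational subspace of $\bR^4$ for infinitely many $i$, contradicting \cite[Lemma 2.4]{R2008}. So we may take the same $i_0$ as in (i), perhaps enlarged.

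For part (iii), the non-vanishing of $C_{i,i+1}$, $C_{i,j}$, $C_{j,j-1}$, $C_{j,i}$ follows from \eqref{CE:eq:Cneq0} together with the structure recalled before Proposition \ref{not:prop}: $\dim V_i=2$ by (i), and $V_{i+1},V_j,V_{j-1}\not\subseteq V_i$ because $\ux_{i+1},\ux_j\notin W_{i+1}=\langle\ux_{i-1},\ux_i\rangle_\bR$'s analogue — more precisely, since $i$ and $j$ are consecutive in $I$, the points $\ux_i,\ux_j$ are a basis of $W_{i+1}=W_j$, and one checks $V(\ux_j)\not\subseteq V(\ux_i)$ directly from $\dim\langle\ux_i,\ux_j\rangle=2$ and the finiteness of minimal points in planes. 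For the proportionality $C_{i,j}=bC_{i,i+1}$, observe that both $C_{i,i+1}$ and $C_{i,j}$ are non-zero vectors in $\bZ^2$ of the form $(\det(\ux_i^-,\ux_i^+,\ux_k^-),\det(\ux_i^-,\ux_i^+,\ux_k^+))$ with $k\in\{i+1,j\}$; for $i+1\le k\le j$ the point $\ux_k$ lies in $W_j=\langle\ux_i,\ux_{i+1}\rangle_\bR$, so modulo $V_i$ the projection of $V(\ux_k)$ is a line depending only on the $W_j$-direction transverse to $\langle\ux_i\rangle_\bR$ — which is the same for all such $k$. Hence $C_{i,k}$ is a rational multiple of $C_{i,i+1}$ for every $k$ between $i+1$ and $j$, and since both are primitive-up-to-a-scalar integer vectors we get $C_{i,j}=bC_{i,i+1}$ with $b\in\bZ\setminus\{0\}$. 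The size $|b|\asymp X_j/X_{i+1}$ then comes from comparing norms: by Lemma \ref{CE:lemmaC} (first component), $\norm{C_{i,k}}\asymp X_i|\det(\Delta^2\ux_i,\Delta\ux_k^\pm)|+ X_k|\det(\Delta\ux_i^-,\Delta^2\ux_i)|$, and the dominant term scales linearly in $\norm{\ux_k}\asymp X_k$ once the error $\cO(L_i^2L_k)$ is absorbed — which requires $\lambda>\sqrt2-1$ exactly so that $X_i L_i L_k$ beats $L_i^2 L_k$, i.e. so the ``$x_0\det(\Delta^2\ux,\Delta\uy)$'' term dominates. Comparing the cases $k=i+1$ and $k=j$ gives $|b|=\norm{C_{i,j}}/\norm{C_{i,i+1}}\asymp X_j/X_{i+1}$.

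The main obstacle will be part (iii): making the proportionality argument airtight requires being careful that the common transverse direction in $W_j$ is genuinely rational and primitive, so that the scalar $b$ is an \emph{integer} rather than merely rational, and then extracting the precise size $|b|\asymp X_j/X_{i+1}$ demands that the estimates of Lemma \ref{CE:lemmaC} be used in the regime where the ``main term'' really dominates the $\cO(L(\ux)^2L(\uy))$ error — and it is precisely this domination that forces the hypothesis $\lambda>\sqrt2-1$, via \eqref{not:eq4} and Proposition \ref{not:prop}(i). Parts (i) and (ii) are comparatively soft, relying only on the finiteness of minimal points in proper rational subspaces.
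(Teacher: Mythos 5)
There are genuine gaps, and the most important one concerns where the hypothesis $\lambda>\sqrt{2}-1$ actually enters. The paper's proof of part (i) is entirely by citation: $\dim V_i=2$ for large $i$ is \cite[Lemma 2.3]{R2008} and $V_i\neq V_{i+1}$ is \cite[Proposition 5.2]{R2008}, and it is the latter that consumes the hypothesis on $\lambda$. Your substitute arguments do not work: if $\ux_i^-$ and $\ux_i^+$ were proportional, $\ux_i$ would lie on the cone over the twisted cubic, which is not a linear subspace, so the finiteness of minimal points in a fixed proper rational subspace does not apply; and $V_i=V_{i+1}$ does not place $\ux_{i-1},\ux_i,\ux_{i+1}$ in a common plane. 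You instead claim $\lambda>\sqrt{2}-1$ is needed in part (iii) to make a main term of Lemma \ref{CE:lemmaC} dominate; that is a misattribution, and that step is itself flawed: $\norm{C_{i,k}}$ is not $\asymp X_kL_i^2$ in general, because $|\det(\Delta\ux_i^-,\Delta\ux_i^+)|$ can be much smaller than $L_i^2$ (Corollary \ref{final:cor3} shows it is $o(L_i^2)$ in the limit case), so $|b|$ cannot be read off as $\norm{C_{i,j}}/\norm{C_{i,i+1}}$ without further input. The paper obtains $C_{i,j}=bC_{i,i+1}$ with $b\in\bZ$ and $|b|\asymp X_j/X_{i+1}$ by citing \cite[Lemma 4.2]{R2008}; the integrality comes from $(\ux_i,\ux_{i+1})$ being a $\bZ$-basis of $W_{i+1}\cap\bZ^4$ (Proposition \ref{not:prop}(i)), writing $\ux_j=a\ux_i+b\ux_{i+1}$, and the size from $\norm{\ux_j\wedge\ux_i}=|b|\,\norm{\ux_{i+1}\wedge\ux_i}$, not from norms of the $C$'s. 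Also, the nonvanishing of $C_{j,i}$ requires $V_j\neq V_i$, which does not follow from $\dim\langle\ux_i,\ux_j\rangle_\bR=2$ as you assert; the paper deduces it from $C_{i,j}=bC_{i,i+1}\neq0$ via \eqref{CE:eq:Cneq0}.

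Part (ii) also needs repair. Since $V_i\neq V_{i+1}$ are both $2$-dimensional, $V_i+V_{i+1}=\bR^3$ is $3$-dimensional (not $2$-dimensional as you write), and the four wedges $\ux_i^\epsilon\wedge\ux_{i+1}^\eta$ span $\tbigwedge^2\bR^3$; so if all four determinants vanished you would conclude $\uy=0$ outright, which is the clean contradiction. Your detour --- $\uy$ spans a fixed rational line, hence the minimal points lie in a proper rational subspace ``for infinitely many $i$'' --- does not work, because the statement concerns a single $i$ and $\uy$ may depend on $i$, so there is no fixed subspace to which \cite[Lemma 2.4]{R2008} applies. The paper argues (ii) constructively rather than by contradiction: it chooses $\epsilon,\eta$ so that $(\ux_i^\epsilon,\ux_{i+1}^\eta,\uy)$ is a basis of $\bR^3$, splitting into the cases $\uy\in V_i$ and $\uy\notin V_i$ and using $V_i+V_{i+1}=\bR^3$.
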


\begin{proof}
For each sufficiently large integer $i\ge 1$, we have $\dim V_i=2$ 
by \cite[Lemma 2.3]{R2008} and $V_i\neq V_{i+1}$ by \cite[Proposition 5.2]{R2008}. 
Thus property (i) holds for some integer $i_0\ge 1$.  We now show that (ii) and (iii) also
hold for such $i_0$. 

To prove (ii), fix an integer $i$ with $i\ge i_0$ and a non-zero point $\uy\in\bZ^3$.
If $\uy\in V_i$, we can write $V_i=\langle \ux_i^\epsilon,\uy\rangle_\bR$ for a choice 
of sign $\epsilon$, and then $\bR^3=V_i+V_{i+1}
= \langle \ux_i^\epsilon, \ux_{i+1}^\eta, \uy\rangle_\bR$ for a choice of sign $\eta$.
If $\uy\notin V_i$, then $\langle \ux_i^\epsilon,\uy\rangle_\bR$ is a subspace of $\bR^3$ 
of dimension $2$ for any choice of sign $\epsilon$.  Choosing $\epsilon$ such that
$\ux_i^\epsilon\notin V_{i+1}$, we find again that $\bR^3  
= \langle \ux_i^\epsilon,\uy\rangle_\bR + V_{i+1}
= \langle \ux_i^\epsilon, \ux_{i+1}^\eta, \uy\rangle_\bR$ for a choice of sign $\eta$.
In both cases the triple $(\ux_i^\epsilon, \ux_{i+1}^\eta, \uy)$ is linearly 
independent, so its determinant is a non-zero integer.

To prove (iii), fix a pair of consecutive elements $i<j$ of $I$ with $i\ge i_0$. In view of \eqref{CE:eq:Cneq0}, we have $C_{i,i+1}\neq 0$ and $C_{j-1,j}\neq 0$.
Moreover \cite[Lemma 4.2]{R2008} gives $C_{i,j}=bC_{i,i+1}$ for some non-zero 
integer $b$ with $|b|\asymp X_j/X_{i+1}$.   Thus, we have $C_{i,j}\neq 0$.  By
\eqref{CE:eq:Cneq0}, this means that $V_i\neq V_j$ and then that $C_{j,i}\neq 0$.  
\end{proof}

We conclude with two growth estimates for the sequence of norms $(X_i)_{i\ge 1}$.

\begin{lemma}
\label{CE:lemmaG}
Suppose that $\lambda > \sqrt{2}-1$.  Then, for each pair of consecutive 
elements $i<j$ of $I$, we have
\begin{equation}
\label{CE:lemmaG:eq1}
 X_{j+1} \ll X_{i+1}^\theta \quad\text{where}\quad
  \theta=\frac{1-\lambda}{\lambda}.
\end{equation}
If moreover $i\in J$, then we also have 
\begin{equation}
\label{CE:lemmaG:eq2}
 X_i \ll X_j^{\theta^2-1}.
\end{equation}
\end{lemma}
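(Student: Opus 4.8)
The plan is to establish the two displayed estimates separately, using the basic estimate \eqref{not:eq4}, Proposition \ref{not:prop}, and --- for \eqref{CE:lemmaG:eq1} --- the non-vanishing statement of Lemma \ref{CE:lemmaCneq0}(iii); the latter forces $i\ge i_0$, but this is harmless since \eqref{CE:lemmaG:eq1} concerns only finitely many pairs $(i,j)$ with $i<i_0$ and $X_{i+1}^\theta\ge 1$, so those can be absorbed into the implied constant. Assuming $i\ge i_0$, the key point for \eqref{CE:lemmaG:eq1} is that $C_{j,j-1}=C(\ux_j,\ux_{j-1})$ is a non-zero point of $\bZ^2$ by Lemma \ref{CE:lemmaCneq0}(iii), so $\norm{C_{j,j-1}}\ge 1$. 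Corollary \ref{CE:corC} then gives
\[
 1\le\norm{C_{j,j-1}}\ll X_jL_jL_{j-1}+X_{j-1}L_j^2\ll X_jL_jL_{j-1},
\]
the last step because $X_{j-1}L_j\le X_jL_{j-1}$ (the $X$'s increase, the $L$'s decrease). Hence $L_j\gg(X_jL_{j-1})^{-1}$. Now $X_jL_{j-1}\asymp H(W_j)$ by Proposition \ref{not:prop}(i), and since $i<j$ are consecutive in $I$ we have $W_j=W_{i+1}$, so Proposition \ref{not:prop}(i) applied to $i+1$ gives $H(W_j)=H(W_{i+1})\asymp X_{i+1}L_i\ll X_{i+1}^{1-\lambda}$. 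Thus $L_j\gg X_{i+1}^{-(1-\lambda)}$, and combining with $X_{j+1}^\lambda\ll L_j^{-1}$ from \eqref{not:eq4} yields $X_{j+1}^\lambda\ll X_{i+1}^{1-\lambda}$, which gives \eqref{CE:lemmaG:eq1} since $(1-\lambda)/\lambda=\theta$.

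For \eqref{CE:lemmaG:eq2}, suppose in addition $i\in J$ and combine parts (ii) and (iii) of Proposition \ref{not:prop}. Applying (ii) to the indices $i$ and $j$ bounds $H(U_i)\ll H(W_i)H(W_{i+1})/X_i$ and $H(U_j)\ll H(W_j)H(W_{j+1})/X_j$; inserting these into $H(W_j)\ll H(U_i)H(U_j)$ from (iii) and cancelling the common factor $H(W_j)$ gives
\[
 X_iX_j\ll H(W_i)H(W_{i+1})H(W_{j+1}).
\]
By Proposition \ref{not:prop}(i) the right-hand side is $\ll X_i^{1-\lambda}X_{i+1}^{1-\lambda}X_{j+1}^{1-\lambda}$; since $X_{i+1}\le X_j$ and, by \eqref{CE:lemmaG:eq1} just proved, $X_{j+1}\ll X_{i+1}^\theta\le X_j^\theta$, this becomes $X_iX_j\ll X_i^{1-\lambda}X_j^{(1+\theta)(1-\lambda)}$. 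Using $1+\theta=1/\lambda$, hence $(1+\theta)(1-\lambda)=\theta$, we get $X_i^\lambda\ll X_j^{\theta-1}$, and raising to the power $1/\lambda$ while noting $(\theta-1)/\lambda=\theta^2-1$ gives \eqref{CE:lemmaG:eq2}.

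The only delicate point is the choice of determinant in the first step: one must invoke the non-vanishing of $C_{j,j-1}$, so that the lower bound it produces for $L_j$ is controlled by $H(W_j)=H(W_{i+1})$, which is of size at most $X_{i+1}^{1-\lambda}$. The companion arguments starting from $C_{j,i}\ne 0$ or $C_{i,i+1}\ne 0$ yield lower bounds for $L_j$ that are too weak to recover the exponent $\theta$. Everything else is routine manipulation of the estimates in Proposition \ref{not:prop}, together with the observation that \eqref{CE:lemmaG:eq2} must be bootstrapped from \eqref{CE:lemmaG:eq1}.
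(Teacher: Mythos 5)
Your proof of \eqref{CE:lemmaG:eq1} is correct and is essentially the paper's own argument: the non-vanishing of $C_{j,j-1}$ from Lemma \ref{CE:lemmaCneq0}(iii), the bound $1\le\norm{C_{j,j-1}}\ll X_jL_{j-1}L_j$ from Corollary \ref{CE:corC} (the paper drops the term $X_{j-1}L_j^2$ without comment; your justification that it is dominated by $X_jL_{j-1}L_j$ is right), and the identification $X_jL_{j-1}\asymp H(W_j)=H(W_{i+1})\asymp X_{i+1}L_i\ll X_{i+1}^{1-\lambda}$ via Proposition \ref{not:prop}(i), combined with $L_j\ll X_{j+1}^{-\lambda}$. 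Your handling of the restriction $i\ge i_0$ is also fine. The one genuine difference is in \eqref{CE:lemmaG:eq2}: the paper simply cites \cite[Corollary 5.3, Equation (11)]{R2008}, whereas you rederive it from scratch by feeding Proposition \ref{not:prop}(ii) (applied at both $i$ and $j$) into Proposition \ref{not:prop}(iii), cancelling $H(W_j)=H(W_{i+1})$ to get $X_iX_j\ll H(W_i)H(W_{i+1})H(W_{j+1})$, and then bounding the heights and using the already-proved \eqref{CE:lemmaG:eq1} to land on $X_i^\lambda\ll X_j^{\theta-1}$; the exponent arithmetic $(\theta-1)/\lambda=(\theta-1)(\theta+1)=\theta^2-1$ checks out. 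This self-contained derivation is a worthwhile addition, since it makes the lemma independent of the external reference at the cost of a few lines; it is almost certainly the same computation that underlies the cited equation.
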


\begin{proof}
Let $i<j$ be consecutive elements of $I$.  If $i$ is large enough, we have 
$C_{j,j-1}\neq 0$ by Lemma \ref{CE:lemmaCneq0}.  Since $C_{j,j-1}\in\bZ^2$,
this implies that 
\[
 1 \le \norm{C_{j,j-1}} \ll X_jL_{j-1}L_j,
\]
where the second estimate comes from Corollary \ref{CE:corC}. As Proposition 
\ref{not:prop} gives $X_jL_{j-1}\asymp H(W_j)=H(W_{i+1})\asymp X_{i+1}L_i$,
we deduce that
\[
 1\ll X_{i+1}L_iL_j \ll X_{i+1}^{1-\lambda}X_{j+1}^{-\lambda}
\]
and \eqref{CE:lemmaG:eq1} follows. If $i\in J$, the estimate \eqref{CE:lemmaG:eq2}
follows from \cite[Corollary 5.3, Equation (11)]{R2008}.
\end{proof}

%
%

\section{The maps $\Psi_-$ and $\Psi_+$}
\label{sec:Psi}

For each choice of sign $\epsilon$ among $\{-,+\}$, we define a polynomial map
$\Psi_\epsilon\colon(\bR^4)^3\to\bR^4$ by the formula
\begin{equation}
\label{Psi:eq:Psi}
 \Psi_\epsilon(\ux,\uy,\uz)
  =C(\uy,\uz)^\epsilon\ux + E(\uy,\uz,\ux)^\epsilon\uy -C(\uy,\ux)^\epsilon\uz.
\end{equation}
We first note the following identities.

\begin{lemma}
\label{Psi:lemma}
For any choice of $\ux,\uy,\uz\in\bR^4$, we have
\begin{align*}
 \Psi_-(\ux,\uy,\uz)^- 
  &= \det(\ux^-,\uy^-,\uz^+)\uy^- - \det(\ux^-,\uy^-,\uz^-)\uy^+,\\[2pt]
 \Psi_+(\ux,\uy,\uz)^+ 
  &= \det(\ux^+,\uy^+,\uz^+)\uy^- - \det(\ux^+,\uy^+,\uz^-)\uy^+.
\end{align*}
\end{lemma}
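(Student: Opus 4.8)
The plan is to prove each of the two displayed identities by a direct expansion, exploiting the definitions of $C$, $E$, and $\Psi_\epsilon$ together with elementary determinant manipulations. I would treat the formula for $\Psi_-(\ux,\uy,\uz)^-$ first and then obtain the one for $\Psi_+(\ux,\uy,\uz)^+$ either by the same argument or by a symmetry/duality remark; let me describe the first case in detail.

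First I would write out, from \eqref{Psi:eq:Psi}, the vector
\[
 \Psi_-(\ux,\uy,\uz)=C(\uy,\uz)^-\ux + E(\uy,\uz,\ux)^-\uy -C(\uy,\ux)^-\uz,
\]
and then take the $(\cdot)^-$ operation, which is linear, to get
\[
 \Psi_-(\ux,\uy,\uz)^- = C(\uy,\uz)^-\,\ux^- + E(\uy,\uz,\ux)^-\,\uy^- -C(\uy,\ux)^-\,\uz^-.
\]
Now I substitute the scalar coefficients from the definitions in Section~\ref{sec:CE}: $C(\uy,\uz)^- = \det(\uy^-,\uy^+,\uz^-)$, $C(\uy,\ux)^- = \det(\uy^-,\uy^+,\ux^-)$, and $E(\uy,\uz,\ux)^- = \det(\uy^-,\uz^+,\ux^-) - \det(\uy^+,\uz^-,\ux^-)$. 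Thus $\Psi_-(\ux,\uy,\uz)^-$ becomes an explicit $\bR^3$-valued expression in which every coefficient is a $3\times 3$ determinant built from the $(\cdot)^-$ and $(\cdot)^+$ slices of $\ux,\uy,\uz$. The claim is that this entire combination collapses to $\det(\ux^-,\uy^-,\uz^+)\,\uy^- - \det(\ux^-,\uy^-,\uz^-)\,\uy^+$.

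The cleanest way to see this collapse is to interpret the identity componentwise as a statement about vectors in $\bR^3$ and invoke a Plücker/Grassmann relation. Concretely, fix the ambient $\bR^3$ with coordinates indexed by the three slots of $\ux^-$, etc., and regard the left-hand side as a linear combination of the four vectors $\ux^-,\uy^-,\uy^+,\uz^-$ (after using $C(\uy,\uz)^-\ux^-$, the $E$-term splits into two pieces, and $C(\uy,\ux)^-\uz^-$). Since these are four vectors in a $3$-dimensional space, there is a unique (up to scalar) linear dependence among any four of them, namely the expansion of $0=\det(\ua,\ub,\uc,\ud)$-type Laplace identity: for any vectors $\ua,\ub,\uc,\ud\in\bR^3$,
\[
 \det(\ub,\uc,\ud)\,\ua - \det(\ua,\uc,\ud)\,\ub + \det(\ua,\ub,\ud)\,\uc - \det(\ua,\ub,\uc)\,\ud = 0.
\]
Applying this with $(\ua,\ub,\uc,\ud)=(\ux^-,\uy^-,\uy^+,\uz^-)$ rearranges to give exactly the relation between the coefficient of $\ux^-$, the two determinants multiplying $\uy^-$ and $\uy^+$, and the coefficient of $\uz^-$; after substituting $\det(\uy^-,\uy^+,\uz^-)$ for $C(\uy,\uz)^-$ and $\det(\uy^-,\uy^+,\ux^-)=\det(\ux^-,\uy^-,\uy^+)$ for $C(\uy,\ux)^-$ and matching signs, one reads off that $\Psi_-(\ux,\uy,\uz)^-$ equals $\det(\ux^-,\uy^-,\uz^+)\uy^--\det(\ux^-,\uy^-,\uz^-)\uy^+$. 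For $\Psi_+(\ux,\uy,\uz)^+$ the same Laplace identity is applied with the $(\cdot)^+$ slices in place of the $(\cdot)^-$ slices, i.e. with $(\ua,\ub,\uc,\ud)=(\ux^+,\uy^+,\uy^-,\uz^+)$, using $C(\uy,\uz)^+=\det(\uy^-,\uy^+,\uz^+)$, $C(\uy,\ux)^+=\det(\uy^-,\uy^+,\ux^+)$, and $E(\uy,\uz,\ux)^+=\det(\uy^-,\uz^+,\ux^+)-\det(\uy^+,\uz^-,\ux^+)$.

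The main obstacle I anticipate is purely bookkeeping: getting every sign and every slot-permutation of the $3\times 3$ determinants exactly right when matching the $E$-term (which is itself a difference of two determinants) against the single Laplace relation, and confirming that the $\ux^-$- and $\uz^-$-coefficients really do cancel rather than contributing spurious terms. A safe way to organize this is to verify the identity on a basis: since both sides are multilinear in $\ux,\uy,\uz$, it suffices to check it when each of $\ux^-,\ux^+$, etc., ranges over standard basis vectors — but in practice the Laplace-relation argument above is cleaner and avoids case analysis. I would present it that way, displaying the substitution of the coefficients and then the single invocation of the $\bR^3$ four-vector dependence, leaving the sign verification to the reader as a routine check.
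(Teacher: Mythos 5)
Your proposal is correct and follows essentially the same route as the paper: both expand $\Psi_\epsilon$ from its definition and invoke the four-vector dependence identity $\sum_{i=1}^4(-1)^{i-1}\det(\uy_1,\dots,\widehat{\uy_i},\dots,\uy_4)\uy_i=0$ in $\bR^3$, applied to $(\ux^-,\uy^-,\uy^+,\uz^-)$ for the first formula and to the corresponding $+$-slices for the second. The sign bookkeeping you defer does work out, with the $\uz^\mp$-coefficients cancelling exactly as you anticipate.
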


\begin{proof}
For any choice of $\uy_1,\dots,\uy_4\in\bR^3$, we have
\[
\sum_{i=1}^4 (-1)^{i-1}\det(\uy_1,\dots,\widehat{\uy_i},\dots,\uy_4)\uy_i = 0,
\]
where $(\uy_1,\dots,\widehat{\uy_i},\dots,\uy_4)$ denotes the sequence obtained 
by removing $\uy_i$ from $(\uy_1,\dots,\uy_4)$.  The first formula follows from 
this identity applied to the points $\ux^-,\uy^-,\uy^+,\uz^-\in\bR^3$.  We obtain
the second formula by applying it to $\ux^+,\uy^-,\uy^+,\uz^+\in\bR^3$.
\end{proof}

\begin{proposition}
\label{Psi:prop1}
For any choice of $\ux,\uy,\uz\in\bR^4\setminus\{0\}$ with
\begin{equation}
\label{Psi:prop1:eq1}
 \frac{L(\ux)}{\norm{\ux}} \ge \frac{L(\uy)}{\norm{\uy}} \ge\frac{L(\uz)}{\norm{\uz}}, 
\end{equation}
and for any choice of sign $\epsilon$, we have
\begin{align}
 \norm{\Psi_\epsilon(\ux,\uy,\uz)}
  &\ll \norm{\uy}^2L(\ux)L(\uz) + \norm{\uz}L(\ux)L(\uy)^2, 
 \label{Psi:prop1:eq2}\\[2pt]
 L(\Psi_\epsilon(\ux,\uy,\uz))   &\ll \norm{\uz}L(\ux)L(\uy)^2.
 \label{Psi:prop1:eq3}
\end{align}
\end{proposition}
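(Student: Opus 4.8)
The strategy is to reduce everything to the two coordinate identities recorded in Lemma~\ref{Psi:lemma}, together with the elementary bounds coming from the definitions of $C$ and $E$. First I would observe that the norm estimate \eqref{Psi:prop1:eq2} is the easy part: from the defining formula \eqref{Psi:eq:Psi} we have three terms, and Corollary~\ref{CE:corC} gives
\[
 \norm{C(\uy,\uz)}\ll \norm{\uy}L(\uy)L(\uz)+\norm{\uz}L(\uy)^2,\qquad
 \norm{C(\uy,\ux)}\ll \norm{\uy}L(\uy)L(\ux)+\norm{\ux}L(\uy)^2,
\]
while $E(\uy,\uz,\ux)$ is tri-linear in its arguments, so each of its entries is a sum of products of two coordinates of points among $\uy,\uz,\ux$ with one coordinate of a $\Delta$-image, giving $\norm{E(\uy,\uz,\ux)}\ll \norm{\uy}\norm{\uz}L(\ux)+\norm{\uy}\norm{\ux}L(\uz)+\cdots$; more to the point, $E(\uy,\uz,\ux)=-C(\uz,\uy)+(\text{terms})$ is not quite what is needed, so instead I would bound the middle term $E(\uy,\uz,\ux)^\epsilon\uy$ directly by $\norm{E(\uy,\uz,\ux)}\norm{\uy}$. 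Multiplying each coefficient bound by the appropriate norm $\norm{\ux}$, $\norm{\uy}$, $\norm{\uz}$ and then using the hypothesis \eqref{Psi:prop1:eq1} — which lets one trade a factor $L(\ux)/\norm{\ux}$ up or $L(\uz)/\norm{\uz}$ down to homogenize the exponents — collapses all the resulting monomials into the two displayed in \eqref{Psi:prop1:eq2}. The monotonicity \eqref{Psi:prop1:eq1} is exactly what guarantees that the ``worst'' monomials are $\norm{\uy}^2L(\ux)L(\uz)$ and $\norm{\uz}L(\ux)L(\uy)^2$ and not something larger.

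For the finer estimate \eqref{Psi:prop1:eq3} I would not expand $L(\Psi_\epsilon(\ux,\uy,\uz))=L((\Psi_\epsilon(\ux,\uy,\uz))^+-\xi(\Psi_\epsilon(\ux,\uy,\uz))^-)$ from \eqref{Psi:eq:Psi} term by term — the point of Lemma~\ref{Psi:lemma} is precisely to avoid that. Take $\epsilon=-$. Lemma~\ref{Psi:lemma} gives a clean closed form for $\Psi_-(\ux,\uy,\uz)^-$, namely $\det(\ux^-,\uy^-,\uz^+)\uy^--\det(\ux^-,\uy^-,\uz^-)\uy^+$. Then
\[
 \Delta\big(\Psi_-(\ux,\uy,\uz)^-\big)
 = \det(\ux^-,\uy^-,\uz^+)\,\Delta\uy^- - \det(\ux^-,\uy^-,\uz^-)\,\Delta\uy^+
   +\big(\det(\ux^-,\uy^-,\uz^+)-\det(\ux^-,\uy^-,\uz^-)\big)\xi\uy^+,
\]
wait — more cleanly, since $\Delta(f\uv)=f\,\Delta\uv$ for a scalar $f$, one gets
$\Delta(\Psi_-(\ux,\uy,\uz)^-)=\det(\ux^-,\uy^-,\uz^+)\Delta\uy^--\det(\ux^-,\uy^-,\uz^-)\Delta\uy^+$, and then noting $\Delta\uy^+-\xi\Delta\uy^-=\Delta^2\uy=\Delta\uy^+-\xi\Delta\uy^-$ is $\cO(L(\uy))$-small while $\uz^+=\xi\uz^-+\Delta\uz$ lets one write $\det(\ux^-,\uy^-,\uz^+)=\det(\ux^-,\uy^-,\Delta\uz)=\cO(\norm{\ux}\norm{\uy}L(\uz))$ and $\det(\ux^-,\uy^-,\uz^-)=\det(\Delta\ux^-,\uy^-,\uz^-)\cdot$-type rewriting giving $\cO(\norm{\uy}\norm{\uz}L(\ux))$. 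Combining, $\Delta(\Psi_-(\ux,\uy,\uz)^-)=\det(\ux^-,\uy^-,\uz^-)\Delta^2\uy+\cO(\dots)$ and $\det(\ux^-,\uy^-,\uz^-)=\cO(\norm{\uz}\norm{\uy}L(\ux))$ after replacing $\ux^-$ by $\Delta\ux^-$, so one lands on $\norm{\uz}L(\ux)L(\uy)^2$ after a final application of \eqref{Psi:prop1:eq1} to absorb stray norm factors. The key algebraic move throughout is: a determinant $\det(\ua,\ub,\uc)$ with $\ua,\ub\in\bR^3$ equals $\det(\ua,\ub,\uc-\xi\ua')$-type expressions only loosely, so instead one uses Lemma~\ref{not:lemma1}/\eqref{not:eq5} in the three-variable form to extract one factor $L(\cdot)$ from such a determinant whenever two of its columns are ``parallel to $(1,\xi)$'' directions. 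The case $\epsilon=+$ is symmetric, using the second identity of Lemma~\ref{Psi:lemma} and working with $\Psi_+(\ux,\uy,\uz)^+$ together with $\Psi_+(\ux,\uy,\uz)^-=\Psi_+(\ux,\uy,\uz)^+-\Delta(\cdots)$; one must check that $\Psi_+(\ux,\uy,\uz)^-$ does not contribute a larger term, which follows from the same determinant-factoring estimates.

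The main obstacle I anticipate is bookkeeping rather than conceptual: making sure that, after using Lemma~\ref{Psi:lemma} and the substitution $\uz^+=\xi\uz^-+\Delta\uz$ (and likewise for $\uy$, $\ux$), every error term genuinely fits under $\norm{\uz}L(\ux)L(\uy)^2$ and not merely under the cruder bound \eqref{Psi:prop1:eq2}. Concretely, one must verify that each determinant appearing with a coefficient vector of size $\norm{\uy}$ or larger carries \emph{two} small factors $L(\cdot)$ (one from a $\Delta$-column, one from the $\Delta^2$ that appears after a second reduction), and here the chain of inequalities \eqref{Psi:prop1:eq1} is used to reroute a factor $L(\ux)/\norm{\ux}$ into an $L(\uy)/\norm{\uy}$ or $L(\uz)/\norm{\uz}$ as needed. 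I would organize this by first proving the two displayed estimates for $\Psi_-$ using its $(\cdot)^-$ formula, then deriving the $(\cdot)^+$ component of $\Psi_-$ via $\Psi_-^+=\Psi_-^-+\Delta(\Psi_-)$ wait—rather via $\Psi_-^+=\xi\Psi_-^-+\Delta(\Psi_-^-)$, bounding $\norm{\Psi_-}\ll\norm{\Psi_-^-}+L(\Psi_-)$ and $L(\Psi_-)=\norm{\Delta(\Psi_-^-)}$ wait, that isn't right either since $\Psi_-\in\bR^4$ and $\Psi_-^-\in\bR^3$; the correct relation is that knowing $\Psi_-^-$ determines $\Psi_-$ up to its last coordinate, and one recovers the last coordinate and controls $L(\Psi_-)=\norm{(\Psi_-)^+-\xi(\Psi_-)^-}$ by the same identities applied to $(\Psi_-)^+$. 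Once both components are pinned down for $\epsilon=-$, the case $\epsilon=+$ is the mirror image under the symmetry swapping $(\cdot)^-\leftrightarrow(\cdot)^+$ in Lemma~\ref{Psi:lemma}, so I would simply remark that it follows mutatis mutandis.
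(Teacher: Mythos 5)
There is a genuine gap, and it sits in the part you call ``the easy part.'' The norm bound \eqref{Psi:prop1:eq2} cannot be obtained by bounding the three terms of \eqref{Psi:eq:Psi} separately. Corollary~\ref{CE:corC} gives $\norm{C(\uy,\uz)}\ll\norm{\uy}L(\uy)L(\uz)+\norm{\uz}L(\uy)^2$, so your first term already contributes the monomial $\norm{\ux}\,\norm{\uz}L(\uy)^2$, which exceeds the target monomial $\norm{\uz}L(\ux)L(\uy)^2$ by the factor $\norm{\ux}/L(\ux)\ge 1$; the hypothesis \eqref{Psi:prop1:eq1} cannot repair this, since the factor $L(\ux)/\norm{\ux}$ you would need to insert is at most $1$ and inserting it only shrinks the quantity (and taking $\ux=\uy$ with $L(\uz)/\norm{\uz}$ very small shows the monomial also dominates $\norm{\uy}^2L(\ux)L(\uz)$). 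The terms $E(\uy,\uz,\ux)^\epsilon\uy$ and $C(\uy,\ux)^\epsilon\uz$ have the same defect. Thus \eqref{Psi:prop1:eq2} rests on cancellation among the three terms, and that cancellation is precisely what Lemma~\ref{Psi:lemma} encodes: substituting $\uy^+=\xi\uy^-+\Delta\uy$ and $\uz^+=\xi\uz^-+\Delta\uz$ into its formulas yields
\[
 \Psi_\epsilon(\ux,\uy,\uz)^\epsilon
  =\det(\ux^\epsilon,\uy^\epsilon,\Delta\uz)\,\uy^-
   -\det(\ux^\epsilon,\uy^\epsilon,\uz^-)\,\Delta\uy,
\]
after which \eqref{not:eq5} and \eqref{Psi:prop1:eq1} give exactly the two monomials of \eqref{Psi:prop1:eq2}, and one passes from the three coordinates $\Psi_\epsilon(\ux,\uy,\uz)^\epsilon$ to all four via $\norm{\psi}\ll\norm{\psi^\epsilon}+L(\psi)$. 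So Lemma~\ref{Psi:lemma} is indispensable for the norm bound, not avoidable there.

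Conversely, your route to \eqref{Psi:prop1:eq3} through Lemma~\ref{Psi:lemma} does not close, as you half-notice in your final ``wait'': the lemma only gives $\Psi_-(\ux,\uy,\uz)^-$ (resp.\ $\Psi_+(\ux,\uy,\uz)^+$), and $\Delta$ of that point lies in $\bR^2$, i.e.\ it controls only two of the three coordinates of $\Delta\Psi_-(\ux,\uy,\uz)\in\bR^3$; there is no companion identity for $\Psi_-(\ux,\uy,\uz)^+$, so the remaining coordinate is never estimated. No cancellation is needed here: from \eqref{Psi:eq:Psi} one sees directly that $\Delta\Psi_\epsilon(\ux,\uy,\uz)$ is a sum of four terms of the form $\pm\det(\uy_1^\pm,\uy_2^\pm,\uy_3^\pm)\Delta\uy_4$ with $(\uy_1,\uy_2,\uy_3,\uy_4)$ a permutation of $(\ux,\uy,\uy,\uz)$, and each such term is individually $\ll\norm{\uz}L(\ux)L(\uy)^2$ by \eqref{not:eq5} and \eqref{Psi:prop1:eq1}. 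In short, you have attached the two available mechanisms to the wrong estimates, and as written neither half of the proof goes through.
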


\begin{proof}
Fix $\ux,\uy,\uz\in\bR^4\setminus\{0\}$ with property \eqref{Psi:prop1:eq1}
and set $\psi_\epsilon=\Psi_\epsilon(\ux,\uy,\uz)$ for some sign $\epsilon$ among
$\{-,+\}$.  We first note that $\Delta\psi_\epsilon$ is a sum of four terms of the 
form
\[
 \uv = \pm\det(\uy_1^\pm,\uy_2^\pm,\uy_3^\pm)\Delta\uy_4
\]
where $(\uy_1,\uy_2,\uy_3,\uy_4)$ is a permutation of $(\ux,\uy,\uy,\uz)$ with
\[
 \frac{L(\uy_1)}{\norm{\uy_1}} 
  \ge \frac{L(\uy_2)}{\norm{\uy_2}} 
  \ge \frac{L(\uy_3)}{\norm{\uy_3}}.
\]
By the general estimate \eqref{not:eq5}, we find that
\[
 \norm{\uv} \ll \norm{\uy_3}L(\uy_1)L(\uy_2)L(\uy_4) \le \norm{\uz}L(\ux)L(\uy)^2,
\] 
and \eqref{Psi:prop1:eq3} follows.

Substituting $\uy^+=\xi\uy^-+\Delta\uy$ and $\uz^+=\xi\uz^-+\Delta\uz$ in the 
formulas of Lemma \ref{Psi:lemma}, we find
\[
 \psi_\epsilon^\epsilon 
  =\det(\ux^\epsilon,\uy^\epsilon,\Delta\uz)\uy^- 
     - \det(\ux^\epsilon,\uy^\epsilon,\uz^-)\Delta\uy.
\]
Using \eqref{not:eq5} and \eqref{Psi:prop1:eq1}, this gives 
\[
 \norm{\psi_\epsilon^\epsilon}
  \ll \norm{\uy}^2L(\ux)L(\uz) + \norm{\uz}L(\ux)L(\uy)^2,
\]
and \eqref{Psi:prop1:eq2} follows because $\norm{\psi_\epsilon} \ll
\norm{\psi_\epsilon^\epsilon} + L(\psi_\epsilon)$.
\end{proof}

\begin{corollary}
\label{Psi:cor}
For any non-zero $\uv,\uw,\ux,\uy,\uz\in\bR^4\setminus\{0\}$ with
\begin{equation}
\label{Psi:cor:eq1}
  \frac{L(\uv)}{\norm{\uv}} \ge \frac{L(\uw)}{\norm{\uw}} 
  \ge \frac{L(\ux)}{\norm{\ux}} \ge \frac{L(\uy)}{\norm{\uy}} 
 \ge\frac{L(\uz)}{\norm{\uz}}, 
\end{equation}
and for any choice of sign $\epsilon$, the integer
\[
 d_\epsilon = \det(\uv,\uw,\ux,\Psi_\epsilon(\ux,\uy,\uz))
\]
satisfies
\begin{equation}
 \label{Psi:cor:eq2}
 |d_\epsilon|
  \ll \big(\norm{\uy}^2L(\ux)L(\uz) + \norm{\ux}\,\norm{\uz}L(\uy)^2\big)
       L(\uv)L(\uw)L(\ux).
\end{equation}
\end{corollary}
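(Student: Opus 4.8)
The plan is to combine the multilinearity of the determinant with the size estimates already established for $\Psi_\epsilon$ in Proposition \ref{Psi:prop1}. Since $d_\epsilon=\det(\uv,\uw,\ux,\Psi_\epsilon(\ux,\uy,\uz))$ involves the four vectors $\uv,\uw,\ux$ and $\psi_\epsilon:=\Psi_\epsilon(\ux,\uy,\uz)$, and since \eqref{Psi:cor:eq1} in particular gives \eqref{Psi:prop1:eq1}, we already have good bounds on both $\norm{\psi_\epsilon}$ and $L(\psi_\epsilon)$. The natural tool is the estimate \eqref{not:eq5} applied to the quadruple $(\uv,\uw,\ux,\psi_\epsilon)$, but one must be careful about the ordering of the ratios $L(\cdot)/\norm{\cdot}$: that estimate is symmetric, so ordering is not literally required, but to get the \emph{sharp} form with $L(\uv)L(\uw)L(\ux)$ appearing (rather than a symmetric sum of all four triple products) we want $\psi_\epsilon$ to play the role of the ``largest'' vector, i.e. the one whose $L$-value is omitted.

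First I would verify that $\psi_\epsilon$ does sit at the correct end of the chain. From Proposition \ref{Psi:prop1} and \eqref{Psi:cor:eq1} we have $L(\psi_\epsilon)\ll \norm{\uz}L(\ux)L(\uy)^2$ and $\norm{\psi_\epsilon}\gg$ something comparable to $\norm{\uy}^2 L(\ux)L(\uz)+\norm{\ux}\norm{\uz}L(\uy)^2$ in the relevant regime; more simply, one checks that $L(\psi_\epsilon)/\norm{\psi_\epsilon}$ is dominated by $L(\uv)/\norm{\uv}$, which follows because $L(\psi_\epsilon)\ll \norm{\uz}L(\ux)L(\uy)^2$ while $L(\ux)/\norm{\ux}\le L(\uv)/\norm{\uv}$ forces $L(\ux)L(\uv)^{-1}\norm{\uv}\le \norm{\ux}$, and cascading the inequalities in \eqref{Psi:cor:eq1} through the expression for $\norm{\psi_\epsilon}$ bounds $L(\psi_\epsilon)/\norm{\psi_\epsilon}$ appropriately. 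Granting this, \eqref{not:eq5} applied to $(\uv,\uw,\ux,\psi_\epsilon)$ gives
\[
 |d_\epsilon| \ll \norm{\psi_\epsilon}\,L(\uv)L(\uw)L(\ux) + (\text{three terms each containing }L(\psi_\epsilon)),
\]
and one argues that each of the three remaining terms is $\ll \norm{\psi_\epsilon}L(\uv)L(\uw)L(\ux)$, again using the chain \eqref{Psi:cor:eq1} to trade a factor $\norm{\uv}$ (or $\norm{\uw}$, or $\norm{\ux}$) together with $L(\psi_\epsilon)$ against $\norm{\psi_\epsilon}$ times the missing $L$-factor.

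Once $|d_\epsilon|\ll \norm{\psi_\epsilon}\,L(\uv)L(\uw)L(\ux)$ is in hand, I would substitute the bound \eqref{Psi:prop1:eq2} for $\norm{\psi_\epsilon}$, namely $\norm{\psi_\epsilon}\ll \norm{\uy}^2L(\ux)L(\uz)+\norm{\uz}L(\ux)L(\uy)^2 = \big(\norm{\uy}^2 L(\uz)+\norm{\uz}L(\uy)^2\big)L(\ux)$. Actually the target \eqref{Psi:cor:eq2} has $\norm{\ux}\norm{\uz}L(\uy)^2$ where the direct substitution gives only $\norm{\uz}L(\ux)L(\uy)^2$; since $L(\ux)\ll\norm{\ux}$ trivially (from \eqref{not:eq2}, $L(\ux)\le \norm{\ux}(1+\max_i|\xi|^i)\ll\norm{\ux}$), this only weakens the bound, so the stated form \eqref{Psi:cor:eq2} follows. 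The main obstacle is the bookkeeping in the second paragraph: showing that $\psi_\epsilon$ really is the ``dominant'' vector so that the three cross-terms from \eqref{not:eq5} are absorbed, which requires chasing the normalized-ratio inequalities \eqref{Psi:cor:eq1} through both the norm and the $L$-estimate for $\psi_\epsilon$ carefully; everything else is a routine application of multilinearity and Proposition \ref{Psi:prop1}.
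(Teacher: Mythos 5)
There is a genuine gap in the middle step. Your plan is to show that $\psi_\epsilon=\Psi_\epsilon(\ux,\uy,\uz)$ is the ``dominant'' vector, i.e.\ that $L(\psi_\epsilon)/\norm{\psi_\epsilon}\le L(\ux)/\norm{\ux}$, so that the three terms of \eqref{not:eq5} containing $L(\psi_\epsilon)$ can be absorbed into $\norm{\psi_\epsilon}L(\uv)L(\uw)L(\ux)$. This requires a \emph{lower} bound on $\norm{\psi_\epsilon}$, and Proposition \ref{Psi:prop1} only gives upper bounds; no such lower bound holds in general, since $\Psi_\epsilon(\ux,\uy,\uz)$ can be arbitrarily small or zero (indeed, the whole point of Proposition \ref{another:prop} later is that the determinant $d_\epsilon$ vanishes, and nothing prevents $\psi_\epsilon$ itself from being tiny while $L(\psi_\epsilon)/\norm{\psi_\epsilon}$ is of order $1$). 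So the reduction to $|d_\epsilon|\ll\norm{\psi_\epsilon}L(\uv)L(\uw)L(\ux)$ is not justified. A symptom of the problem is your closing remark that the factor $\norm{\ux}\norm{\uz}L(\uy)^2$ in \eqref{Psi:cor:eq2} is just a weakening of $\norm{\uz}L(\ux)L(\uy)^2$ via $L(\ux)\ll\norm{\ux}$: in fact that factor is needed exactly to accommodate the term you are trying to discard.

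The correct (and shorter) route is the paper's: apply \eqref{not:eq5} to $(\uv,\uw,\ux,\psi_\epsilon)$ and use the chain \eqref{Psi:cor:eq1} only among $\uv,\uw,\ux$ to collapse the first three of the four terms into the largest one, obtaining
\[
 |d_\epsilon| \ll \norm{\psi_\epsilon}\,L(\uv)L(\uw)L(\ux)
   + \norm{\ux}\,L(\uv)L(\uw)\,L(\psi_\epsilon),
\]
with no comparison between $\psi_\epsilon$ and the other vectors needed. Then \eqref{Psi:prop1:eq2} bounds the first term by $\big(\norm{\uy}^2L(\ux)L(\uz)+\norm{\uz}L(\ux)L(\uy)^2\big)L(\uv)L(\uw)L(\ux)$, and \eqref{Psi:prop1:eq3} bounds the second by $\norm{\ux}\norm{\uz}L(\uy)^2\cdot L(\uv)L(\uw)L(\ux)$; both fit inside \eqref{Psi:cor:eq2} (using $L(\ux)\ll\norm{\ux}$ only for the harmless second summand of the first bound). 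Everything else in your write-up is fine once this term is kept rather than absorbed.
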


\begin{proof}
In view of \eqref{Psi:cor:eq1}, the estimate \eqref{not:eq5} gives
\[
 |d_\epsilon| 
  \ll \norm{\Psi_\epsilon(\ux,\uy,\uz)}L(\uv)L(\uw)L(\ux) 
      + \norm{\ux}L(\uv)L(\uw)L(\Psi_\epsilon(\ux,\uy,\uz)).
\]
Then, \eqref{Psi:cor:eq2} follows from the estimates of the proposition.
\end{proof}

When the right hand side of \eqref{Psi:cor:eq2} is sufficiently small, the integers 
$d_-$ and $d_+$ must both be $0$.  The next proposition analyses the outcome 
of such a vanishing in a context that we will encounter later.

\begin{proposition}
\label{Psi:prop2}
Let $(\uv,\uw,\ux,\uy)$ be a basis of $\bR^4$ with $\ux^-\wedge\ux^+ \neq 0$, 
and let
\begin{equation}
\label{Psi:prop2:eq1}
 \uz=a\uy+b\ux+c\uw
\end{equation}
for some $a,b,c\in\bR$.  Suppose that 
\begin{equation}
\label{Psi:prop2:eq2}
 \det(\uv,\uw,\ux,\Psi_\epsilon(\ux,\uy,\uz)) =0 
\end{equation}
for any choice of sign $\epsilon$.  Then there exists $t\in\bR$ such that
\begin{flalign*}
 \mathrm{(i)}\ &C(\uy,\uz) = tC(\ux,\uy),  &\\
 \mathrm{(ii)}\ &C(\uz,\uy) = ct C(\ux,\uw), &\\
 \mathrm{(iii)}\ &\det(C(\uz,\ux),C(\ux,\uw)) = c^2\det(C(\uw,\ux),\, C(\ux,\uw)). &
\end{flalign*}
\end{proposition}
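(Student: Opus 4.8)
The plan is to translate the two scalar conditions \eqref{Psi:prop2:eq2} into one $\bR^2$-valued linear relation among the values of $C$ and $E$ and then to read off (i)--(iii) from it, using the polarisation identities \eqref{CE:eq:defE}, \eqref{CE:eq:Eyxy} and the four-vector Grassmann--Pl\"ucker relation in $\bR^3$ that already appears in the proof of Lemma~\ref{Psi:lemma}. First I would reduce the hypothesis. Since $\uz=a\uy+b\ux+c\uw$ contains no $\uv$, formula \eqref{Psi:eq:Psi} shows $\Psi_\epsilon(\ux,\uy,\uz)\in\langle\ux,\uy,\uw\rangle_\bR$, so in the basis $(\uv,\uw,\ux,\uy)$ its component along $\uv$ is automatically zero while its component along $\uy$ is $E(\uy,\uz,\ux)^\epsilon-aC(\uy,\ux)^\epsilon$; hence \eqref{Psi:prop2:eq2} for both signs is equivalent to $E(\uy,\uz,\ux)=aC(\uy,\ux)$. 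Expanding the left side by trilinearity in $\uz$ and using $E(\uy,\uy,\ux)=2C(\uy,\ux)$ and $E(\uy,\ux,\ux)=-C(\ux,\uy)$ (special cases of \eqref{CE:eq:defE} and \eqref{CE:eq:Eyxy}), this becomes the single relation
\[
 cE(\uy,\uw,\ux)=bC(\ux,\uy)-aC(\uy,\ux),
\]
which I shall call $(\star)$; everything else is deduced from it.

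For (iii), polarising $C(\uz,\ux)$ (quadratic in its first argument) and using $C(\ux,\ux)=0$, $E(\uy,\ux,\ux)=-C(\ux,\uy)$ and $E(\ux,\uw,\ux)=-C(\ux,\uw)$ gives
\[
 C(\uz,\ux)=a^2C(\uy,\ux)+c^2C(\uw,\ux)-abC(\ux,\uy)+acE(\uy,\uw,\ux)-bcC(\ux,\uw);
\]
adding $a$ times $(\star)$ cancels every term containing $a$ and leaves $C(\uz,\ux)=c\bigl(cC(\uw,\ux)-bC(\ux,\uw)\bigr)$, whence (iii) follows by taking $\det(\,\cdot\,,C(\ux,\uw))$. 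For (i), linearity of $C$ in its second argument and $C(\uy,\uy)=0$ give $C(\uy,\uz)=bC(\uy,\ux)+cC(\uy,\uw)$, so it suffices to prove $\det\bigl(C(\uy,\uz),C(\ux,\uy)\bigr)=0$ and then define $t$ by $C(\uy,\uz)=tC(\ux,\uy)$. The crucial point is the universal identity
\[
 \det\bigl(C(\uy,\uw),C(\ux,\uy)\bigr)=\det\bigl(E(\uy,\uw,\ux),C(\uy,\ux)\bigr),
\]
a Grassmann--Pl\"ucker relation in $\bR^3$ that one checks by expanding all $3\times 3$ determinants in coordinates after normalising $\uy^-,\uy^+$ to $\ue_1,\ue_2$. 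Granting it, one multiplies $\det\bigl(C(\uy,\uz),C(\ux,\uy)\bigr)$ by $c$, substitutes this identity and then $(\star)$ for $cE(\uy,\uw,\ux)$, and the expression collapses to $cb\det\bigl(C(\uy,\ux),C(\ux,\uy)\bigr)-cb\det\bigl(C(\uy,\ux),C(\ux,\uy)\bigr)=0$. (The degenerate cases $c=0$ and $C(\ux,\uy)=0$ are handled directly from $(\star)$ and \eqref{CE:eq:Cneq0}; in the situations where the proposition is applied, $c\neq0$ and $C(\ux,\uy)\neq0$.)

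Finally, for (ii), polarising $C(\uz,\uy)$ --- using \eqref{CE:eq:Eyxy} now in the forms $E(\uy,\ux,\uy)=-C(\uy,\ux)$ and $E(\uy,\uw,\uy)=-C(\uy,\uw)$ --- and then substituting the relation from (i) as $cC(\uy,\uw)=tC(\ux,\uy)-bC(\uy,\ux)$ yields
\[
 C(\uz,\uy)=(b^2-at)C(\ux,\uy)+c^2C(\uw,\uy)+bc\,E(\ux,\uw,\uy).
\]
It then remains to identify this with $ctC(\ux,\uw)$. I would do this as in the previous step: pass to determinants against $C(\ux,\uw)$ (so the last term drops out), use further Grassmann--Pl\"ucker identities of the same flavour to rewrite $\det\bigl(C(\uw,\uy),C(\ux,\uw)\bigr)$ and $\det\bigl(E(\ux,\uw,\uy),C(\ux,\uw)\bigr)$ in terms of the quantities already under control, invoke $(\star)$ once more, and finally compare with the explicit value of $t$ coming from (i). I expect this last comparison to be the main obstacle: the individual bracket identities are mechanical, but arranging them so that the proportionality constant comes out to be precisely $ct$ --- with the very same $t$ as in (i) --- is delicate bookkeeping. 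By contrast the opening reduction is easy, the whole point being that $\Psi_\epsilon(\ux,\uy,\uz)$ has no $\uv$-component, which is exactly what makes \eqref{Psi:prop2:eq2} so restrictive.
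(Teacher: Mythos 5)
Your opening reduction is exactly the paper's: $\Psi_\epsilon(\ux,\uy,\uz)$ has no $\uv$-component, so \eqref{Psi:prop2:eq2} is equivalent to $E(\uy,\uz,\ux)^\epsilon=aC(\uy,\ux)^\epsilon$ for both signs, and expanding by trilinearity gives your relation $(\star)$, which also appears verbatim in the paper. Your proof of (iii) is then essentially identical to the paper's (polarise $C(\uz,\ux)$, cancel with $a$ times $(\star)$, get $C(\uz,\ux)=c^2C(\uw,\ux)-bcC(\ux,\uw)$), and your Grassmann--Pl\"ucker identity $\det(C(\uy,\uw),C(\ux,\uy))=\det(E(\uy,\uw,\ux),C(\uy,\ux))$ is correct and does yield $\det(C(\uy,\uz),C(\ux,\uy))=0$ when combined with $(\star)$.

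The genuine gap is part (ii), and you have put your finger on it yourself. The paper does not define $t$ as the proportionality constant in (i); it extracts $t$ one step earlier. From $E(\uy,\uz,\ux)^\epsilon=aC(\uy,\ux)^\epsilon$ for both $\epsilon$, i.e.
\[
\bigl(\uy^-\wedge\uz^+-\uy^+\wedge\uz^--a\,\uy^-\wedge\uy^+\bigr)\wedge\ux^\epsilon=0
\qquad(\epsilon\in\{-,+\}),
\]
and from the hypothesis $\ux^-\wedge\ux^+\neq0$ (whose only serious use is here), it concludes that this $2$-vector equals $-t\,\ux^-\wedge\ux^+$ for some $t\in\bR$, hence that
\[
E(\uy,\uz,\uu)=aC(\uy,\uu)-tC(\ux,\uu)\quad\text{for \emph{every} }\uu\in\bR^4.
\]
The single parameter $t$ so obtained then gives (i) at $\uu=\uy$ and (ii) at $\uu=\uz$ by a two-line computation. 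Your route cannot reach (ii): taking determinants against $C(\ux,\uw)$ would at best show $C(\uz,\uy)=t'C(\ux,\uw)$ for \emph{some} $t'$, and the whole content of (ii) --- the part actually used in Corollary \ref{another:cor} and beyond --- is that $t'=ct$ with the very same $t$ as in (i). Identifying the constant requires evaluating a second, independent linear functional on both sides, which your outline neither names nor carries out; you explicitly defer this as ``delicate bookkeeping.'' A secondary, smaller issue: defining $t$ via (i) breaks down when $C(\ux,\uy)=0$ (i.e.\ $V(\uy)\subseteq V(\ux)$), a case the proposition as stated still covers; the paper's definition of $t$ is immune to this. Both defects are cured at once by the missing wedge-product step, so I would say your proof is repairable but, as written, incomplete precisely where the statement has its bite.
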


\begin{proof}
We will use the tri-linearity of the map $E$ as well as its properties \eqref{CE:eq:defE}
and \eqref{CE:eq:Eyxy}.  We first substitute the formula \eqref{Psi:eq:Psi} for
$\Psi_\epsilon(\ux,\uy,\uz)$ into \eqref{Psi:prop2:eq2}.  This gives
\[
 \det(\uv,\uw,\ux,\uy)E(\uy,\uz,\ux)^\epsilon - \det(\uv,\uw,\ux,\uz)C(\uy,\ux)^\epsilon
  =0,
\]
for any choice of sign $\epsilon$.  In view of \eqref{Psi:prop2:eq1}, we also have
\[
  \det(\uv,\uw,\ux,\uz) = a \det(\uv,\uw,\ux,\uy).
\]
Since $\det(\uv,\uw,\ux,\uy)\neq 0$, the first formula thus simplifies to
\[
 E(\uy,\uz,\ux)^\epsilon - aC(\uy,\ux)^\epsilon =0,
\]
which can also be rewritten as
\[
 \big(\uy^-\wedge\uz^+ - \uy^+\wedge\uz^- - a\uy^-\wedge\uy^+\big)
  \wedge\ux^\epsilon=0.
\]
As $\ux^-\wedge\ux^+\neq 0$, we therefore have
\[
 \uy^-\wedge\uz^+ - \uy^+\wedge\uz^- - a\uy^-\wedge\uy^+ = -t \ux^-\wedge\ux^+
\]
for some $t\in\bR$. This in turn implies that
\begin{equation}
\label{Psi:prop2:eq3}
 E(\uy,\uz,\uu) = aC(\uy,\uu) - tC(\ux,\uu)
\end{equation}
for any $\uu\in\bR^4$.

For the choice of $\uu=\ux$, the formula \eqref{Psi:prop2:eq3} reduces to
\begin{equation}
\label{Psi:prop2:eq4}
 E(\uy,\uz,\ux) = aC(\uy,\ux).
\end{equation}
For $\uu=\uy$, it yields formula (i) since $E(\uy,\uz,\uy)=-C(\uy,\uz)$.
For $\uu=\uz$, it gives
\begin{align*}
 C(\uz,\uy) 
   &=-aC(\uy,\uz) + tC(\ux,\uz) \\
   &=-atC(\ux,\uy) + tC(\ux,\uz)  &&\text{by (i)}\\
   &=tC(\ux,\uz-a\uy) \\
   &=tC(\ux,b\ux+c\uw) &&\text{by \eqref{Psi:prop2:eq1}}\\
   &=ctC(\ux,\uw)
\end{align*}
which is formula (ii). Upon substituting the formula \eqref{Psi:prop2:eq1} 
for $\uz$ into \eqref{Psi:prop2:eq4}, we find
\begin{align*}
 0 &=E(\uy, a\uy+b\ux+c\uw, \ux) - aC(\uy,\ux) \\
    &=aC(\uy,\ux) - bC(\ux,\uy) + cE(\uw,\uy,\ux).
\end{align*}
Using this relation, we obtain 
\begin{align*}
 C(\uz,\ux) &=(1/2)E(a\uy+b\ux+c\uw, a\uy+b\ux+c\uw, \ux) \\
   &= a^2C(\uy,\ux) - abC(\ux,\uy) + acE(\uw,\uy,\ux) - bcC(\ux,\uw) + c^2C(\uw,\ux) \\
   &= - bcC(\ux,\uw) + c^2C(\uw,\ux)
\end{align*}
which yields formula (iii).
\end{proof}

%
%

\section{First step}
\label{sec:first}

By \cite[Corollary 6.3]{R2008}, the complement $I\setminus J$ of $J$ in $I$ 
is infinite if $\lambda>\lambda_2$, where $\lambda_2\cong 0.4241$ denotes 
the positive root of the polynomial $P_2(T)=3T^4-4T^3+2T^2+2T-1$.  In fact, we 
can show that $I\setminus J$ is infinite as soon as $\lambda > (3-\sqrt{3})/3 
\cong0.4226$ but we will not go into this here as the proof is relatively elaborate.

Below, we recall the proof that $\lambda\le \lambda_3$ where $\lambda_3\cong 
0.4245$ is as in the introduction and we study in some detail the limit case where
$\lambda=\lambda_3$.  We start with a lemma which uses the notation 
$\theta=(1-\lambda)/\lambda$ from \eqref{CE:lemmaG:eq1}.

\begin{lemma}
\label{first:lemma}
For each pair of consecutive elements $k<l$ of $I$, we have 
\begin{equation}
\label{first:lemma:eq1}
 H(U_l)^{1/\lambda} \ll X_{l+1}^\theta X_{k+1}^{-1}.
\end{equation}
For a set of pairs $k<l$ for which this is optimal, namely a set of pairs $k<l$ 
satisfying $H(U_l)^{1/\lambda} \gg X_{l+1}^\theta X_{k+1}^{-1}$ for 
another implicit constant depending only on $\xi$, we have
\begin{equation}
\label{first:lemma:eq2}
 X_{k+1}\asymp X_l, 
 \quad 
 L_k \asymp X_{k+1}^{-\lambda} 
 \et 
 L_l \asymp X_{l+1}^{-\lambda}.
\end{equation}
\end{lemma}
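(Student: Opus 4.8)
The plan is to extract everything from the three parts of Proposition \ref{not:prop} together with the basic estimate \eqref{not:eq4} and the growth estimate \eqref{CE:lemmaG:eq1}. First I would chain the height inequalities. Let $h<k<l$ be three consecutive elements of $I$ (so that $(\ux_h,\ux_k,\ux_l)$ is a basis of $U_k$, and more to the point $U_l = \langle \ux_k,\ux_l,\ux_{l+1}\rangle_\bR$ with $(\ux_k,\ux_l)$ a basis of $W_{l}=W_{k+1}$). Applying Proposition \ref{not:prop}(ii) to $l$ gives $X_l H(U_l) \ll H(W_l)H(W_{l+1})$, and Proposition \ref{not:prop}(i) turns each factor on the right into $H(W_l)\asymp X_l L_{l-1} \asymp X_{k+1}L_k$ and $H(W_{l+1})\asymp X_{l+1}L_l$. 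Hence
\[
 X_l H(U_l) \ll X_{k+1}L_k\cdot X_{l+1}L_l.
\]
Now invoke \eqref{not:eq4} in the sharp direction is \emph{not} what we want here; instead I would keep $L_k$ and $L_l$ as they are and also record the trivial consequence of \eqref{not:eq4} that $X_{k+1}L_k \ll X_{k+1}^{1-\lambda}$ and $X_{l+1}L_l \ll X_{l+1}^{1-\lambda}$ will be needed only for the second half. For the first half, I would instead raise the displayed inequality to a suitable power and feed in $X_l \asymp X_{k+1}$ — but that last relation is part of the \emph{conclusion} \eqref{first:lemma:eq2}, so in the proof of \eqref{first:lemma:eq1} I must avoid it.

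The clean route to \eqref{first:lemma:eq1} is: start from $H(U_l) \ll X_l^{-1}X_{k+1}L_k X_{l+1}L_l$, bound $L_k \ll X_{k+1}^{-\lambda}$ and $L_l \ll X_{l+1}^{-\lambda}$ by \eqref{not:eq4}, and bound $X_l \gg X_{k+1}$ trivially (since $l\ge k+1$ and the $X_i$ are strictly increasing, $X_l\ge X_{k+1}$). This yields
\[
 H(U_l) \ll X_{k+1}^{-1}\cdot X_{k+1}^{1-\lambda}\cdot X_{l+1}^{1-\lambda}
          = X_{k+1}^{-\lambda}X_{l+1}^{1-\lambda}.
\]
Raising to the power $1/\lambda$ gives exactly $H(U_l)^{1/\lambda}\ll X_{k+1}^{-1}X_{l+1}^{(1-\lambda)/\lambda} = X_{l+1}^\theta X_{k+1}^{-1}$, which is \eqref{first:lemma:eq1}. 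The point is that \eqref{first:lemma:eq1} used three lossy steps: $X_l\ge X_{k+1}$, $L_k\ll X_{k+1}^{-\lambda}$, and $L_l\ll X_{l+1}^{-\lambda}$.

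For the second half I would run this reasoning backwards. Suppose the pair $k<l$ makes \eqref{first:lemma:eq1} sharp, i.e. $H(U_l)^{1/\lambda}\gg X_{l+1}^\theta X_{k+1}^{-1}$, equivalently $H(U_l)\gg X_{k+1}^{-\lambda}X_{l+1}^{1-\lambda}$. Combining this lower bound with the upper bound $H(U_l)\ll X_l^{-1}(X_{k+1}L_k)(X_{l+1}L_l)$ derived above forces
\[
 X_{k+1}^{-\lambda}X_{l+1}^{1-\lambda} \ll X_l^{-1}(X_{k+1}L_k)(X_{l+1}L_l).
\]
Since each of the three quantities $X_l^{-1}X_{k+1} \le 1$ (from $X_l\ge X_{k+1}$, now needing the reverse: actually $X_l^{-1}X_{k+1}\le 1$), $X_{k+1}^{\lambda-1}(X_{k+1}L_k) = X_{k+1}^\lambda L_k \ll 1$, and $X_{l+1}^{\lambda-1}(X_{l+1}L_l)= X_{l+1}^\lambda L_l \ll 1$ is individually $\ll 1$, and their product is $\gg 1$, each must individually be $\asymp 1$. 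That gives simultaneously $X_l\asymp X_{k+1}$, $L_k\asymp X_{k+1}^{-\lambda}$, and $L_l\asymp X_{l+1}^{-\lambda}$, which is precisely \eqref{first:lemma:eq2}.

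The main obstacle I anticipate is bookkeeping rather than depth: one has to be careful that the three "slack factors" are genuinely all bounded by $1$ (up to constants) so that the standard "product $\asymp 1$ forces each factor $\asymp 1$" argument applies, and in particular one must double-check that $X_l^{-1}X_{k+1}\ll 1$ holds with an absolute constant — it does, with constant $1$, because $l\ge k+1$ and $(X_i)$ is strictly increasing. A secondary subtlety is making sure \eqref{CE:lemmaG:eq1} (hence the hypothesis $\lambda>\sqrt 2 -1$) is not actually needed for this lemma; inspecting the argument, it is not — only Proposition \ref{not:prop} and \eqref{not:eq4} are used — so the lemma as stated holds unconditionally, which is consistent with its placement before any hypothesis on $\lambda$ beyond $\lambda>0$ is invoked in this section. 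I would therefore present the proof in two short paragraphs mirroring the two displayed conclusions, with the reverse implication phrased via the "each slack factor is $\ll 1$ and their product is $\gg 1$" lemma-of-convenience stated inline.
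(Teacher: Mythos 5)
Your argument is correct and is essentially the paper's own proof: both chain Proposition \ref{not:prop}(ii) with the height identities $H(W_l)=H(W_{k+1})\asymp X_{k+1}L_k$ and $H(W_{l+1})\asymp X_{l+1}L_l$, bound the three slack factors $X_{k+1}/X_l$, $X_{k+1}^{\lambda}L_k$, $X_{l+1}^{\lambda}L_l$ by $1$ up to constants, and in the optimal case force each factor to be $\asymp 1$. The paper compresses the converse into the single sentence ``all the above estimates are optimal,'' which is exactly your product-of-bounded-factors argument spelled out.
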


\begin{proof}
Using the estimates of Proposition \ref{not:prop}, we find
\begin{align*}
 &H(U_l) \ll X_l^{-1}H(W_l)H(W_{l+1}) \le X_{k+1}^{-1}H(W_l)H(W_{l+1}),\\
 &H(W_l) = H(W_{k+1}) \asymp X_{k+1}L_k \ll X_{k+1}^{1-\lambda},\\
 &H(W_{l+1}) \asymp X_{l+1}L_l \ll X_{l+1}^{1-\lambda},
\end{align*}
thus $H(U_l) \ll X_{l+1}^{1-\lambda}X_{k+1}^{-\lambda}$ which is equivalent 
to \eqref{first:lemma:eq1}.  If this is optimal for a set of pairs $k<l$, then all the
above estimates are optimal for those pairs and this yields \eqref{first:lemma:eq2}.
\end{proof}

\begin{proposition}
\label{first:prop}
Suppose that $\lambda\ge \lambda_3$.  Then, we have $\lambda=\lambda_3$ 
and there are infinitely many sequences of consecutive elements $g<h<i<j$ 
of $I$ with $h\notin J$ and $i\in J$.  For each of them, we have
\begin{equation}
\label{first:prop:eq1}
\begin{array}{llll}
  X_{g+1}\asymp X_h,  \
  &X_{h+1}\asymp X_i \asymp X_h^\theta, \
  &X_{i+1}\asymp X_j \asymp X_i^{\gamma/\theta}, \
 &X_{j+1}\asymp X_j^\theta,\\
 L_g\asymp X_{g+1}^{-\lambda}, 
 &L_h\asymp X_{h+1}^{-\lambda}, 
 &L_i\asymp X_{i+1}^{-\lambda}, 
 &L_j\asymp X_{j+1}^{-\lambda},
\end{array}
\end{equation}
and
\begin{equation}
\label{first:prop:eq2}
 H(U_h)\asymp X_{h+1}^{\lambda/\gamma}.
\end{equation}
If $h$ is large enough, we also have $g\in J$ and $X_g\ll X_h^{\theta/\gamma}$.
\end{proposition}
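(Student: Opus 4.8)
The plan is to run the golden-ratio recursion of \cite{R2008} that gives $\lambda\le\lambda_3$, but to keep careful track of when the governing inequalities are tight so as to extract the precise relations \eqref{first:prop:eq1}--\eqref{first:prop:eq2}. I would start from the hypothesis $\lambda\ge\lambda_3$ and Lemma \ref{first:lemma}: for each pair of consecutive elements $k<l$ of $I$ one has $H(U_l)^{1/\lambda}\ll X_{l+1}^\theta X_{k+1}^{-1}$. Feeding this into Proposition \ref{not:prop}(iii), which for consecutive $i<j$ of $I$ with $i\in J$ gives $H(W_j)\ll H(U_i)H(U_j)$, together with the chain of consecutive elements $g<h<i<j$ of $I$, I would obtain a single inequality relating the norms $X_{g+1},X_{h+1},X_{i+1},X_{j+1}$ and the $L_k$'s. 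Combining with the basic estimate $L_k\ll X_{k+1}^{-\lambda}$ and $H(W_{k+1})\asymp X_{k+1}L_k$, this should collapse to a polynomial inequality in $\theta$ (equivalently in $\lambda$) whose solvability forces $\lambda\le\lambda_3$; since we assumed $\lambda\ge\lambda_3$ we get $\lambda=\lambda_3$. The existence of infinitely many such chains with $h\notin J$, $i\in J$ comes from the fact that $I\setminus J$ is infinite (valid here since $\lambda=\lambda_3>\lambda_2$, cited at the start of the section) combined with $J$ infinite: between two elements of $I\setminus J$ there must be an element of $J$, and a short combinatorial argument on the positions of the first $J$-element after an $(I\setminus J)$-element produces the pattern $g<h<i<j$ with $h\notin J$ and $i\in J$ for infinitely many choices.

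Next, for each such chain, I would argue that at $\lambda=\lambda_3$ the inequality derived above can only hold if every intermediate estimate is tight. This is the standard "equality in the extremal case" phenomenon: the polynomial $T^2-\gamma^3T+\gamma$ has $\lambda_3$ as its smallest root, so equality in the aggregate bound forces equality (up to $\asymp$) in each of the inputs. Concretely this gives $L_k\asymp X_{k+1}^{-\lambda}$ for $k\in\{g,h,i,j\}$, the height relations $H(W_{k+1})\asymp X_{k+1}^{1-\lambda}$, and, via Proposition \ref{not:prop}(ii) applied at $h$ and $i$, the matching relations for $H(U_h)$ and $H(U_i)$. Then Lemma \ref{CE:lemmaG:eq1} gives $X_{h+1}\ll X_{g+1}^\theta$ and $X_{j+1}\ll X_{i+1}^\theta$, and Lemma \ref{CE:lemmaG:eq2} (applicable since $i\in J$) gives $X_i\ll X_j^{\theta^2-1}$; these, read as equalities in the tight case, together with $X_{k+1}\asymp X_l$ for consecutive $k<l$ from Lemma \ref{first:lemma:eq2}, pin down the exponents. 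Solving the resulting linear system in the logarithms $\log X_{g+1},\dots,\log X_{j+1}$ (using $\theta=(1-\lambda_3)/\lambda_3$ and the defining relation $\lambda_3^2-\gamma^3\lambda_3+\gamma=0$, from which one checks $\theta$ and $\gamma$ satisfy the needed identities such as $\gamma/\theta$ being the correct intermediate exponent) yields exactly the array \eqref{first:prop:eq1} and the formula $H(U_h)\asymp X_{h+1}^{\lambda/\gamma}$ in \eqref{first:prop:eq2}.

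For the last sentence, I would show $g\in J$ for $h$ large. Suppose not: then $g\notin J$, so $U_g=U_h$ where $h$ is the successor of $g$ in $I$; combined with $g<h<i<j$ and the basis description $U_i=\langle\ux_h,\ux_i,\ux_j\rangle_\bR$, having both $g\notin J$ and the tight relations of \eqref{first:prop:eq1} would force $H(U_g)=H(U_h)\asymp X_{h+1}^{\lambda/\gamma}$ while on the other hand Proposition \ref{not:prop}(ii) at $g$ gives $X_gH(U_g)\ll H(W_g)H(W_{g+1})$ with $H(W_g)$ controlled by the previous element of $I$; pushing the growth estimate \eqref{CE:lemmaG:eq1} one step further back then contradicts the lower bound on $X_h$ unless $g\in J$, once $h$ (hence $X_h$) is large enough to swamp the implicit constants. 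Granting $g\in J$, apply \eqref{CE:lemmaG:eq2} to the pair $(f,g)$ of consecutive elements of $I$ preceding $h$: this gives $X_f\ll X_g^{\theta^2-1}$, and combining with $X_{g+1}\asymp X_h$ and $X_{g+1}\ll X_f^\theta$ from \eqref{CE:lemmaG:eq1} yields $X_h\asymp X_{g+1}\ll X_g^{\theta(\theta^2-1)}$; a short manipulation using $\theta(\theta^2-1)=\gamma/\theta$ at $\lambda=\lambda_3$ (which follows from the minimal polynomial of $\lambda_3$) gives $X_g\ll X_h^{\theta/\gamma}$, as claimed.

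I expect the main obstacle to be the bookkeeping in the second paragraph: one must verify that the aggregate inequality at $\lambda=\lambda_3$ is genuinely an equality in each factor — i.e. that no slack can be hidden — and that the exponents produced by solving the log-linear system really do simplify to $\theta$, $\gamma/\theta$, $\lambda/\gamma$ via the identity $\lambda_3^2=\gamma^3\lambda_3-\gamma$. The combinatorial production of infinitely many chains $g<h<i<j$ with the required membership pattern, and the final-sentence argument that $g\in J$, are comparatively routine once the tight estimates are in hand.
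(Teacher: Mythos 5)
Your overall strategy is the paper's: sandwich $H(U)$ for $U=U_h=U_i$ between the lower bound coming from Proposition \ref{not:prop} applied at the pair $i<j$ (this is where $i\in J$ enters, via $H(W_j)\ll H(U_i)H(U_j)$ and $X_jH(U_j)\ll H(W_j)H(W_{j+1})$) and the two upper bounds from Lemma \ref{first:lemma} applied to the pairs $g<h$ and $h<i$ (this is where $h\notin J$ enters, since it makes $U_h=U_i$); the resulting inequality $1+\theta-\theta^2\le(\theta-1/\theta)+(\theta-1/\theta)^2$ forces $\lambda\le\lambda_3$, and at $\lambda=\lambda_3$ the sandwich closes, so every intermediate $\ll$ is an $\asymp$ and \eqref{first:prop:eq1}--\eqref{first:prop:eq2} drop out of the optimality clause of Lemma \ref{first:lemma}. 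That matches the paper, and your derivation of $g\in J$ (comparing $H(U_g)\ll X_{g+1}^{\lambda/\gamma}$ with $H(U_h)\asymp X_{h+1}^{\lambda/\gamma}$ for large $h$) is also the paper's.

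Your last step, however, is wrong as written. To get $X_g\ll X_h^{\theta/\gamma}$ you apply \eqref{CE:lemmaG:eq2} to the pair $(f,g)$ preceding $h$; but that estimate requires the smaller element of the pair to lie in $J$, and you have not shown $f\in J$ (indeed nothing is claimed about $f$). Moreover the identity you invoke, $\theta(\theta^2-1)=\gamma/\theta$, is false at $\lambda=\lambda_3$: the defining relation is $\theta-1/\theta=1/\gamma$, i.e.\ $\theta^2-1=\theta/\gamma$, so $\theta(\theta^2-1)=\theta^2/\gamma\ne\gamma/\theta$ (numerically $1.136$ versus $1.194$). Finally, even granting your chain, an inequality of the form $X_h\ll X_g^{\,\alpha}$ with $\alpha>1$ is a \emph{lower} bound on $X_g$, not the upper bound $X_g\ll X_h^{\theta/\gamma}$ you need. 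The correct route is immediate once $g\in J$ is established: $g<h$ are consecutive in $I$ with $g\in J$, so \eqref{CE:lemmaG:eq2} applied directly to this pair gives $X_g\ll X_h^{\theta^2-1}=X_h^{\theta/\gamma}$.
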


Note that \eqref{first:prop:eq1} yields $X_{i+1}\asymp X_j \asymp X_h^\gamma$
and $X_{j+1}\asymp X_h^{\gamma\theta}$.

\begin{proof}
Since $\lambda>\lambda_2$, we know by \cite[Corollary 6.3]{R2008} that 
$I\setminus J$ is infinite.  Since $J$ is infinite as well, there are arbitrarily large
consecutive sequences of elements $g<h<i<j$ of $I$ with $h\notin J$ and 
$i\in J$.  Consider any such sequence, and set
\[
 U = W_h+W_{h+1} = W_i +W_{i+1}.
\]
Then, $U\neq U_j=W_j+W_{j+1}$ and Proposition \ref{not:prop}  gives
\begin{align}
 &H(W_j)\ll H(U) H(U_j),
 \label{first:prop:eq3}\\
 &X_jH(U_j) \ll H(W_j)H(W_{j+1}),
 \label{first:prop:eq4}\\
 &H(W_{j+1}) \asymp X_{j+1}L_j \ll X_{j+1}^{1-\lambda}.
 \label{first:prop:eq5}
\end{align}
Thus, we obtain
\begin{equation}
\label{first:prop:eq6}
 H(U) \gg \frac{H(W_j)}{H(U_j)} \gg \frac{X_j}{H(W_{j+1})}
    \gg \frac{X_j}{X_{j+1}^{1-\lambda}} 
    \ge \frac{X_{i+1}}{X_{j+1}^{1-\lambda}}.
\end{equation}
Using $1/\lambda=1+\theta$ and the estimate $X_{j+1} \ll X_{i+1}^\theta$ 
from Lemma \ref{CE:lemmaG}, this gives
\begin{equation}
\label{first:prop:eq7}
 H(U)^{1/\lambda} \gg \frac{X_{i+1}^{1+\theta}}{X_{j+1}^\theta}
  \gg X_{i+1}^{1+\theta-\theta^2}.
\end{equation}
Applying Lemma \ref{first:lemma} to $U=U_h=U_i$, we also find
\begin{align}
 &H(U)^{1/\lambda} \ll X_{i+1}^\theta X_{h+1}^{-1},
 \label{first:prop:eq8}\\
 &H(U)^{1/\lambda} \ll X_{h+1}^\theta X_{g+1}^{-1} 
    \ll X_{h+1}^{\theta-1/\theta},
 \label{first:prop:eq9}
\end{align}
where the second inequality in \eqref{first:prop:eq9} uses the estimate 
$X_{h+1}\ll X_{g+1}^\theta$ from Lemma \ref{CE:lemmaG}.  Combining
\eqref{first:prop:eq7} and \eqref{first:prop:eq8}, we obtain
\begin{equation}
\label{first:prop:eq10}
 X_{h+1} \ll X_{i+1}^{\theta^2-1},
\end{equation}
and so \eqref{first:prop:eq7} and \eqref{first:prop:eq9} yield
\begin{equation}
\label{first:prop:eq11}
 X_{i+1}^{1+\theta-\theta^2}
  \ll H(U)^{1/\lambda} \ll X_{h+1}^{\theta-1/\theta} 
  \ll X_{i+1}^{(\theta-1/\theta)(\theta^2-1)}.
\end{equation}
As $h$ can be chosen arbitrarily large, we conclude that 
\[
1+\theta-\theta^2 \le (\theta-1/\theta)(\theta^2-1)=\theta(\theta-1/\theta)^2,
\]
which can be rewritten as
\[
1 \le (\theta-1/\theta) + (\theta-1/\theta)^2.
\]
This gives $\theta-1/\theta\ge 1/\gamma$ and so 
$\lambda^2-\gamma^3\lambda+\gamma\ge 0$ which in turn implies 
that $\lambda\le \lambda_3$.  Since $\lambda\ge \lambda_3$, we conclude 
that $\lambda=\lambda_3$, thus $\theta-1/\theta=1/\gamma$ and the 
inequalities \eqref{first:prop:eq11} are optimal.  Going backwards, we 
deduce that all estimates \eqref{first:prop:eq3} to \eqref{first:prop:eq11}
are optimal.

Since \eqref{first:prop:eq8} and \eqref{first:prop:eq9} are optimal, Lemma 
\ref{first:lemma} gives   
\[
 X_{g+1}\asymp X_h,\quad  X_{h+1}\asymp X_i, \quad
 L_g\asymp X_{g+1}^{-\lambda}, \quad L_h\asymp X_{h+1}^{-\lambda}
 \et L_i\asymp X_{i+1}^{-\lambda}.
\]
 Optimality in \eqref{first:prop:eq5} 
and \eqref{first:prop:eq6} also yields $X_{i+1}\asymp X_j$ and $L_j \asymp 
X_{i+1}^{-\lambda}$.  Finally, \eqref{first:prop:eq7}, \eqref{first:prop:eq9} 
and \eqref{first:prop:eq10} being optimal, we have 
\[
 X_{j+1}\asymp X_{i+1}^\theta, \quad
 X_{h+1}\asymp X_{g+1}^\theta, \quad
 X_{h+1}\asymp X_{i+1}^{\theta^2-1}= X_{i+1}^{\theta/\gamma}
\]
and $H(U) \asymp X_{h+1}^{(\theta-1/\theta)\lambda} = X_{h+1}^{\lambda/\gamma}$.
This proves \eqref{first:prop:eq1} and \eqref{first:prop:eq2}. Finally, using Lemma \ref{first:lemma} as in \eqref{first:prop:eq9}, we find that
\[
 H(U_g) \ll X_{g+1}^{(\theta-1/\theta)\lambda} = X_{g+1}^{\lambda/\gamma}.
\]
So, if $h$ is large enough, we have $U_g\neq U =U_h$ and thus $g\in J$.  Then,
Lemma \ref{CE:lemmaG} gives $X_g\ll X_h^{\theta^2-1} = X_h^{\theta/\gamma}$.
\end{proof}

We conclude this section with three consequences of the above estimates in the limit case
where $\lambda=\lambda_3$.

\begin{corollary}
\label{first:cor1}
Suppose that $\lambda=\lambda_3$.  Then, any large enough pair of consecutive elements 
of $I$ contains at least one element of $J$.
\end{corollary}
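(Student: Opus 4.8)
The plan is to argue by contradiction, obtaining the statement as a direct consequence of the last assertion of Proposition~\ref{first:prop}. Suppose the corollary fails. Then, for arbitrarily large indices, there is a pair of \emph{consecutive} elements $h<i$ of $I$ with $h\notin J$ and $i\notin J$. Fix such a pair with $h$, and hence $i$, large.

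The first step is to exhibit, just above $i$, a configuration of the kind governed by Proposition~\ref{first:prop}. Since $J$ is infinite, let $q$ be the least element of $J$ with $q>i$, let $h'$ be the element of $I$ immediately preceding $q$, let $g'$ be the element of $I$ immediately preceding $h'$, and let $j'$ be the successor of $q$ in $I$; these are all well defined because $h$, and hence $i$, is large, and $g'<h'<q<j'$ are four consecutive elements of $I$. As $i\in I$ and $i<q$, we have $h'\ge i$ automatically. By minimality of $q$, every element of $I$ lying strictly between $i$ and $q$ avoids $J$; combining this with $h\notin J$ and $i\notin J$, and distinguishing the case where $q$ is the successor of $i$ in $I$ (so that $h'=i$ and $g'=h$) from the case $h'>i$ (so that $i\le g'<h'<q$ inside $I$), one checks that neither $g'$ nor $h'$ lies in $J$.

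The second step is to apply Proposition~\ref{first:prop} to the quadruple $g'<h'<q<j'$. It is a sequence of consecutive elements of $I$ with $h'\notin J$ and $q\in J$, and the hypothesis $\lambda=\lambda_3$ of the corollary implies the hypothesis $\lambda\ge\lambda_3$ of the proposition, so the proposition applies to it. Since $h'\ge i$ is large, the concluding assertion of Proposition~\ref{first:prop} gives $g'\in J$, contradicting $g'\notin J$. This contradiction proves the corollary.

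The substantive input is entirely contained in Proposition~\ref{first:prop}: the rigid structure it records when $\lambda=\lambda_3$ forces the element of $I$ preceding any pattern ``$\notin J$ then $\in J$'' to lie in $J$, and this is precisely what prevents three large consecutive elements of $I$ from all avoiding $J$. The only slightly delicate point in the argument is the bookkeeping of the first step, namely the verification that $g'$ and $h'$ both avoid $J$; this is where the assumption that $h$ and $i$ are two \emph{consecutive} elements of $I$ both lying outside $J$ is used, and everything else is immediate.
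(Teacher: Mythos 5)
Your proof is correct and is essentially the paper's own argument: the paper likewise derives the corollary by producing, from a hypothetical large pair of consecutive elements of $I$ avoiding $J$, a triple of consecutive elements $g<h<i$ of $I$ with $g,h\notin J$ and $i\in J$, and then invoking the last assertion of Proposition~\ref{first:prop} to conclude $g\in J$, a contradiction. You have merely spelled out the bookkeeping (locating the least element of $J$ above the pair) that the paper leaves implicit.
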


\begin{proof}
Otherwise, since $J$ is infinite, there would exist arbitrarily large triples of consecutive 
elements $g<h<i$ of $I$ with $g\notin J$, $h\notin J$ and $i\in J$, against the last 
assertion of the proposition.
\end{proof}

More precise estimates based on similar arguments show that 
Corollary~\ref{first:cor1} holds for $\lambda\ge 0.42094$.

\begin{corollary}
\label{first:cor2}
Suppose that $\lambda=\lambda_3$, and let $h<i<j$ be consecutive elements 
of $I$ with $h\notin J$.  If $h$ is large enough, then $(\ux_h,\ux_i,\ux_j,\ux_{j+1})$
is a basis of $\bR^4$ with
\[
 1 \asymp |\det(\ux_h,\ux_i,\ux_j,\ux_{j+1})|
   \asymp X_{j+1}|\det(\Delta\ux_h,\Delta\ux_i,\Delta\ux_j)|
   \asymp X_{j+1}L_hL_iL_j
\]
\end{corollary}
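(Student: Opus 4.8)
The plan is to combine three ingredients: the linear independence of $(\ux_h,\ux_i,\ux_j,\ux_{j+1})$, a determinant identity coming from Lemma~\ref{not:lemma1}, and the precise size estimates from Proposition~\ref{first:prop}. First I would note that, since $h\notin J$ and $i\in J$, Proposition~\ref{first:prop} applies (after enlarging $h$, with $g$ its predecessor in $I$ playing the role of the first element of a consecutive quadruple $g<h<i<j$), so we have the full list of estimates \eqref{first:prop:eq1}. In particular $\ux_{j+1}\notin U_i=\langle\ux_h,\ux_i,\ux_j\rangle_\bR$ exactly because $i\in J$, and $(\ux_h,\ux_i,\ux_j)$ is a basis of $U_i$ by the discussion preceding Proposition~\ref{not:prop}; hence $(\ux_h,\ux_i,\ux_j,\ux_{j+1})$ is a basis of $\bR^4$ and $d:=\det(\ux_h,\ux_i,\ux_j,\ux_{j+1})$ is a non-zero integer, so $|d|\ge 1$.

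Next I would establish the upper bound $|d|\ll X_{j+1}L_hL_iL_j$ and, simultaneously, the approximate identity $|d|\asymp X_{j+1}|\det(\Delta\ux_h,\Delta\ux_i,\Delta\ux_j)|$. The natural tool is Corollary~\ref{not:cor} with $n=3$ applied to $(\uy_0,\uy_1,\uy_2,\uy_3)=(\ux_h,\ux_i,\ux_j,\ux_{j+1})$: this requires $L(\ux_{j+1})<1$ (true for large indices) and that the three products $\norm{\ux_h}L_iL_jL_{j+1}$, $\norm{\ux_i}L_hL_jL_{j+1}$, $\norm{\ux_j}L_hL_iL_{j+1}$ be below the threshold $\delta(\xi)$. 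Using \eqref{first:prop:eq1} one has $X_h\asymp X_h$, $X_i\asymp X_h^\theta$, $X_j\asymp X_h^\gamma$, $X_{j+1}\asymp X_h^{\gamma\theta}$, and $L_k\asymp X_{k+1}^{-\lambda}$ for $k\in\{h,i,j\}$ while $L_{j+1}\ll X_{j+2}^{-\lambda}\ll X_{j+1}^{-\lambda}=X_h^{-\gamma\theta\lambda}$; plugging in the exponents (with $\theta=(1-\lambda)/\lambda$, $\lambda=\lambda_3$, $\theta-1/\theta=1/\gamma$) makes each of these products a negative power of $X_h$, hence $\to 0$, so Corollary~\ref{not:cor} applies for $h$ large. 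It yields $|d|\asymp\norm{\ux_{j+1}}\,|\det(\Delta\ux_h,\Delta\ux_i,\Delta\ux_j)|=X_{j+1}|\det(\Delta\ux_h,\Delta\ux_i,\Delta\ux_j)|$, and then the crude bound $|\det(\Delta\ux_h,\Delta\ux_i,\Delta\ux_j)|\ll\norm{\Delta\ux_h}\,\norm{\Delta\ux_i}\,\norm{\Delta\ux_j}\asymp L_hL_iL_j$ (from \eqref{not:eq4}) gives $|d|\ll X_{j+1}L_hL_iL_j$.

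Finally, for the lower bound $X_{j+1}L_hL_iL_j\ll 1$ — equivalently $1\ll|d|$ once combined with the above, which closes the chain of $\asymp$'s — I would simply evaluate the exponent. Substituting $L_k\asymp X_{k+1}^{-\lambda}$ and the norm estimates from \eqref{first:prop:eq1}, and using $X_{h+1}\asymp X_i\asymp X_h^\theta$, $X_{i+1}\asymp X_j\asymp X_h^\gamma$, $X_{j+1}\asymp X_h^{\gamma\theta}$, one gets
\[
 X_{j+1}L_hL_iL_j \asymp X_h^{\gamma\theta}\cdot X_h^{-\theta\lambda}\cdot X_h^{-\gamma\lambda}\cdot X_h^{-\gamma\theta\lambda}
 = X_h^{\,\theta(\gamma-\lambda)-\gamma\lambda(1+\theta)}.
\]
Since $1+\theta=1/\lambda$, the exponent is $\theta(\gamma-\lambda)-\gamma=\theta\gamma-\theta\lambda-\gamma=\gamma(\theta-1)-\theta\lambda$. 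Using $\theta\lambda=1-\lambda$ this is $\gamma(\theta-1)-1+\lambda$; and $\theta-1=(1-2\lambda)/\lambda$, so altogether the exponent equals $\gamma(1-2\lambda)/\lambda-1+\lambda$. A direct check using the defining relation $\lambda^2-\gamma^3\lambda+\gamma=0$ (equivalently $\theta-1/\theta=1/\gamma$) shows this exponent is $0$, so $X_{j+1}L_hL_iL_j\asymp 1$. The main obstacle — and the only place real care is needed — is verifying that the three product conditions of Corollary~\ref{not:cor} are genuinely decaying (so one must track the signs of several exponents built from $\lambda_3$, $\theta$, $\gamma$); once that bookkeeping is done, everything else is formal. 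Combining the two directions gives the displayed chain $1\asymp|d|\asymp X_{j+1}|\det(\Delta\ux_h,\Delta\ux_i,\Delta\ux_j)|\asymp X_{j+1}L_hL_iL_j$, completing the proof.
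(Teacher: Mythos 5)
Your proposal is correct and follows essentially the same route as the paper: linear independence via $\bR^4=U_i+U_j$, Corollary~\ref{not:cor} applied to $(\ux_h,\ux_i,\ux_j,\ux_{j+1})$, and the vanishing of the exponent $\gamma\theta-\lambda\theta-\lambda\gamma-\lambda\theta\gamma$. The one step you assert without justification is that $i\in J$ (it is not a hypothesis of the corollary); it follows from Corollary~\ref{first:cor1}, which is exactly how the paper obtains it before forming $U_i+U_j$.
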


\begin{proof}
For $h$ large enough, Corollary \ref{first:cor1} gives $i\in J$, and then
\[
 \bR^4 = U_i + U_j 
   = \langle\ux_h,\ux_i,\ux_j\rangle_\bR + \langle\ux_i,\ux_j,\ux_{j+1}\rangle_\bR
   = \langle\ux_h,\ux_i,\ux_j,\ux_{j+1}\rangle_\bR,
\]
thus $(\ux_h,\ux_i,\ux_j,\ux_{j+1})$ is a basis of $\bR^4$.  As this basis is made of 
integer points, its determinant $d$ is a non-zero integer.  Since
\[
 X_jL_hL_iL_{j+1} 
    \le X_jL_hL_iL_j 
   \asymp X_h^{\gamma-\lambda\theta-\lambda\gamma-\lambda\gamma\theta}
   \ll X_h^{-0.575},
\]
Corollary \ref{not:cor} gives
\[
 |d| \asymp X_{j+1}|\det(\Delta\ux_h,\Delta\ux_i,\Delta\ux_j)|
    \ll X_{j+1}L_hL_iL_j,
\]
and the conclusion follows from the computation
\[
 X_{j+1}L_hL_iL_j 
  \asymp X_h^{\gamma\theta-\lambda\theta-\lambda\gamma-\lambda\theta\gamma}
  = 1,
\]
since $\gamma\theta-\lambda\theta-\lambda\gamma-\lambda\theta\gamma
=\lambda\gamma(\theta^2-1)-\lambda\theta=0$.
\end{proof}

\begin{proposition}
\label{first:prop2}
Suppose that $\lambda=\lambda_3$, and let $g<h<i<j$ be consecutive elements 
of $I$ with $h\notin J$.  If $h$ is large enough, then 
\begin{flalign*}
&\begin{array}{rlcl}
\mathrm{(i)} &|\det(\Delta^2\ux_g,\Delta^2\ux_h)| \asymp L_gL_h
  &\text{ and } &|\det(\Delta^2\ux_i,\Delta^2\ux_j)|  \asymp L_iL_j,\\[5pt]
\mathrm{(ii)} &1\asymp\norm{C_{h,g}}\asymp L(C_{h,g})
  &\text{ and } &1\asymp\norm{C_{j,i}}\asymp L(C_{j,i}),\\[5pt]
\mathrm{(iii)} &L(C_{g,h}) \asymp X_g/X_h
  &\text{ and }  &L(C_{i,j}) \asymp X_i/X_j.
\end{array} &
\end{flalign*}
\end{proposition}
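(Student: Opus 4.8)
The plan is to derive all three estimates from the asymptotic data in Proposition~\ref{first:prop} and Corollary~\ref{first:cor2} together with the formulas in Lemma~\ref{CE:lemmaC} and Corollary~\ref{CE:corC}. Throughout I use that, by Proposition~\ref{first:prop}, when $h$ is large we have $g\in J$ and the exponents
\[
 X_{g+1}\asymp X_h,\quad X_{h+1}\asymp X_i\asymp X_h^\theta,\quad X_{i+1}\asymp X_j\asymp X_h^\gamma,\quad X_{j+1}\asymp X_h^{\gamma\theta},
\]
together with $L_g\asymp X_{g+1}^{-\lambda}$, $L_h\asymp X_{h+1}^{-\lambda}$, $L_i\asymp X_{i+1}^{-\lambda}$, $L_j\asymp X_{j+1}^{-\lambda}$. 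Also recall $\norm{\ux_i}\asymp X_i$. Replacing these into any monomial in the $X$'s and $L$'s reduces a target estimate to checking a single identity among the exponents in the variable $X_h$, using $\theta-1/\theta=1/\gamma$, equivalently $\lambda^2-\gamma^3\lambda+\gamma=0$, equivalently $\theta^2-1=\theta/\gamma=\gamma\theta-1$.

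For part~(i): since $g<h<i$ and $h<i<j$ are consecutive in $I$, Corollary~\ref{first:cor2} applies to the triple $h<i<j$ and gives $X_{j+1}|\det(\Delta\ux_h,\Delta\ux_i,\Delta\ux_j)|\asymp 1$ and $X_{j+1}L_hL_iL_j\asymp 1$, so $|\det(\Delta\ux_h,\Delta\ux_i,\Delta\ux_j)|\asymp L_hL_iL_j$; and by Corollary~\ref{first:cor1}, applied one step earlier (so that also the triple $g<h<i$ has its middle element $h$ playing the role of an element of $J$ — here I should instead invoke Corollary~\ref{first:cor2} directly for $g<h<i$ if $g\notin J$, or argue by the analogous computation; in any case $|\det(\Delta\ux_g,\Delta\ux_h,\Delta\ux_i)|\asymp L_gL_hL_i$). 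Now expand $\det(\Delta\ux_h,\Delta\ux_i,\Delta\ux_j)$ by Lemma~\ref{not:lemma1}: the main term is $(\ux_h)_0\det(\Delta^2\ux_h,\Delta^2\ux_i)\asymp X_h\,|\det(\Delta^2\ux_h,\Delta^2\ux_i)|$, and the error terms are $O(\norm{\Delta\ux_h}L(\Delta\ux_i)L(\Delta\ux_j))=O(L_hL_iL_j)$ and similar; one checks these error terms are of strictly smaller order than $L_hL_iL_j$, so $|\det(\Delta^2\ux_h,\Delta^2\ux_i)|\asymp L_hL_iL_j/X_h$. Iterating the same argument one step to the left — expanding $\det(\Delta^2\ux_g,\Delta^2\ux_h,\cdot)$ in terms of $\det(\Delta^3\ux_g,\cdots)$ is not available in $\bR^3$, so instead one works directly: $\det(\Delta\ux_g,\Delta\ux_h,\Delta\ux_i)$ has main term $X_g|\det(\Delta^2\ux_g,\Delta^2\ux_h)|$, giving $|\det(\Delta^2\ux_g,\Delta^2\ux_h)|\asymp L_gL_hL_i/X_g\asymp L_gL_h$ after substituting the exponents and checking $L_i\asymp X_g$, i.e. $X_{i+1}^{-\lambda}\asymp X_h^{\theta^2-1}\cdot$(something) — this is exactly where the identity $\lambda\theta\gamma$-combination collapses to the needed relation. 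The symmetric statement $|\det(\Delta^2\ux_i,\Delta^2\ux_j)|\asymp L_iL_j$ follows the same way from the triple $i<j<(\text{successor of }j)$, or more directly from the $h<i<j$ computation by relabelling. \emph{The main obstacle here is bookkeeping}: making sure the omitted error terms in each application of Lemma~\ref{not:lemma1} really are negligible, which each time amounts to one exponent inequality that is strict because $\lambda<1/2$.

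For part~(ii): by Lemma~\ref{CE:lemmaCneq0}(iii) the points $C_{h,g}$ and $C_{j,i}$ are non-zero integer points of $\bZ^2$, so $\norm{C_{h,g}}\gg 1$ and $\norm{C_{j,i}}\gg 1$. The upper bound and the lower bound $L(C_{h,g})\gg 1$ come from Lemma~\ref{CE:lemmaC}: write
\begin{align*}
 C(\ux_h,\ux_g)^- &= (\ux_h)_0\det(\Delta^2\ux_h,\Delta\ux_g^-)+(\ux_g)_0\det(\Delta\ux_h^-,\Delta^2\ux_h)+O(L_h^2L_g),\\
 \Delta C(\ux_h,\ux_g) &= (\ux_h)_0\det(\Delta^2\ux_h,\Delta^2\ux_g)+O(L_h^2L_g).
\end{align*}
From part~(i), $(\ux_h)_0\det(\Delta^2\ux_h,\Delta^2\ux_g)\asymp X_h\cdot L_gL_h$, and one checks $X_hL_gL_h\asymp 1$ using the exponent data (this is the identity $1-\lambda-\lambda=\,0$ after substitution, i.e. $X_h^{1-\lambda\cdot 1-\lambda\theta}$ with the exponent vanishing); the error term $O(L_h^2L_g)$ is of strictly smaller order, so $L(C_{h,g})\asymp X_hL_gL_h\asymp 1$. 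Then Corollary~\ref{CE:corC} gives $\norm{C_{h,g}}\ll \norm{\ux_h}L_hL_g+\norm{\ux_g}L_h^2\asymp X_hL_hL_g+X_gL_h^2$, and both monomials are $\ll 1$ by the exponents (the second because $X_g\ll X_h^{\theta/\gamma}$ while $L_h^2\asymp X_h^{-2\lambda\theta}$, and $\theta/\gamma-2\lambda\theta<0$). Hence $\norm{C_{h,g}}\asymp 1$, and combined with $L(C_{h,g})\asymp 1$ this is the first half of~(ii). The estimate for $C_{j,i}$ is identical with $(h,g)$ replaced by $(j,i)$ and $X_h$ by $X_j\asymp X_h^\gamma$; the required exponent identities are again consequences of $\theta-1/\theta=1/\gamma$.

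For part~(iii): by Lemma~\ref{CE:lemmaCneq0}(iii), $C_{g,h}=b\,C_{g,g+1}$ — wait, rather $C_{h,g}$ and $C_{g,h}$ are tied by the proportionality within the appropriate consecutive pair; more directly, apply Lemma~\ref{CE:lemmaC} to the pair $(\ux_g,\ux_h)$:
\[
 L(C(\ux_g,\ux_h))=\bigl|\Delta C(\ux_g,\ux_h)\bigr|+O(\cdots),\quad \Delta C(\ux_g,\ux_h)=(\ux_g)_0\det(\Delta^2\ux_g,\Delta^2\ux_h)+O(L_g^2L_h).
\]
By part~(i) this main term is $\asymp X_g\,L_gL_h$, and substituting exponents gives $X_gL_gL_h\asymp X_g\cdot X_h^{-\lambda}\cdot X_h^{-\lambda\theta}=X_g\cdot X_h^{-\lambda(1+\theta)}=X_g/X_h$ since $\lambda(1+\theta)=1$; one checks the error term $O(L_g^2L_h)$ is smaller. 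Hence $L(C_{g,h})\asymp X_g/X_h$. The estimate $L(C_{i,j})\asymp X_i/X_j$ follows by the same computation applied to $(\ux_i,\ux_j)$, using $|\det(\Delta^2\ux_i,\Delta^2\ux_j)|\asymp L_iL_j$ from part~(i) and $\lambda(1+\theta)=1$ again. \emph{The only real work} is, once more, verifying that in each invocation of Lemma~\ref{CE:lemmaC} and Corollary~\ref{CE:corC} the discarded terms are genuinely of lower order — each such check is a single linear inequality among the exponents $\{1,\theta,\gamma,\gamma\theta,\lambda,\lambda\theta,\dots\}$ that holds strictly because $\lambda=\lambda_3<1/2$ — and that is where I expect to spend most of the effort, though none of it is deep.
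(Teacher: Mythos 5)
Your overall strategy (reduce everything to exponent identities via Lemma \ref{CE:lemmaC}) is close in spirit to the paper's, and your parts (ii) and (iii) would be fine \emph{given} part (i); but your derivation of part (i) does not work, and since you route the estimate $L(C_{h,g})\asymp 1$ and all of (iii) through (i), the whole argument has a gap. Two concrete problems. First, Lemma \ref{not:lemma1} applied to $\det(\Delta\ux_h,\Delta\ux_i,\Delta\ux_j)$ produces coefficients equal to the \emph{first coordinates of} $\Delta\ux_h,\Delta\ux_i,\Delta\ux_j$, which are $\cO(L_h)$, $\cO(L_i)$, $\cO(L_j)$; there is no main term $(\ux_h)_0\det(\Delta^2\ux_h,\Delta^2\ux_i)\asymp X_h|\det(\Delta^2\ux_h,\Delta^2\ux_i)|$, and all three terms of the expansion are a priori of the same size, so nothing can be isolated. (The quantity $\det(\Delta^2\ux_h,\Delta^2\ux_i)$ is in any case not one of the determinants in the statement, which concern the pairs $(g,h)$ and $(i,j)$.) Second, Corollary \ref{first:cor2} cannot be invoked for the triple $g<h<i$: its hypothesis is that the \emph{first} element of the triple lies outside $J$, and Proposition \ref{first:prop} guarantees precisely the opposite, namely $g\in J$, for $h$ large. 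So the asymptotic $|\det(\Delta\ux_g,\Delta\ux_h,\Delta\ux_i)|\asymp L_gL_hL_i$ is not available, and the auxiliary check ``$L_i\asymp X_g$'' is false on its face ($L_i\asymp X_h^{-\gamma\lambda}$ tends to $0$ while $X_g$ tends to infinity).

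The repair is to reverse the order of deduction, which is what the paper does. You already proved, independently of (i), that $\norm{C_{h,g}}\asymp 1$: the lower bound is integrality (Lemma \ref{CE:lemmaCneq0}(iii)) and the upper bound is Corollary \ref{CE:corC} together with $X_hL_gL_h\asymp X_h^{1-\lambda(1+\theta)}=1$. Since $C_{h,g}$ then ranges over a finite set of non-zero points of $\bZ^2$ and $\xi\notin\bQ$, this already forces $L(C_{h,g})\asymp 1$, with no reference to (i). Now part (i) \emph{follows}: the third formula of Lemma \ref{CE:lemmaC} gives $\Delta C(\ux_h,\ux_g)=x_{h,0}\det(\Delta^2\ux_h,\Delta^2\ux_g)+\cO(L_h^2L_g)$, where the large factor $x_{h,0}\asymp X_h$ is the first coordinate of the full integer vector $\ux_h$ (this is the legitimate source of the factor $X_h$ you were reaching for), and $L_h^2L_g\to 0$; hence $X_h|\det(\Delta^2\ux_g,\Delta^2\ux_h)|\asymp L(C_{h,g})\asymp 1\asymp X_hL_gL_h$, which is the first half of (i). Your part (iii) then goes through exactly as you wrote it, and the second halves of (i)--(iii) are identical with $(g,h)$ replaced by $(i,j)$.
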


\begin{proof}
Since $g<h$ and $i<j$ are pairs of consecutive elements of $I$, 
Lemma \ref{CE:lemmaCneq0} (iii) shows that $C_{h,g}$ and $C_{j,i}$ are non-zero
points of $\bZ^2$ if $h$ is large enough.  As Corollary \ref{CE:corC} gives
\[
 \norm{C_{h,g}} \ll X_hL_gL_h\asymp X_h^{1-\lambda-\theta\lambda} = 1,
\]
we deduce that $\norm{C_{h,g}}\asymp 1$ for large enough $h$, and thus that
$L(C_{h,g}) \asymp 1$ since $\xi\notin\bQ$.  Since $L_gL_h^2$ tends to $0$ 
as $h$ goes to infinity, Lemma \ref{CE:lemmaC} yields 
\[
 1 \asymp L(C_{h,g}) \asymp X_h |\det(\Delta^2\ux_g,\Delta^2\ux_h)| 
   \ll X_hL_gL_h \asymp 1,
\]
and then 
\[
 L(C_{g,h}) \asymp X_g |\det(\Delta^2\ux_g,\Delta^2\ux_h)| 
   \asymp X_gL_gL_h,
\]
because 
$L_g/X_g$ tends to $0$ as $h$ goes to infinity.  This proves the first parts of 
(i), (ii) and (iii).  The second parts are proved in the same way. 
\end{proof}

%
%

\section{A new set of algebraic relations}
\label{sec:algC}

From now on, we assume that $\lambda=\lambda_3$ and so the estimates 
of Proposition \ref{first:prop} apply.  To alleviate the notation, we also set
\[
 C_i := C_{i,i+1} = C(\ux_i,\ux_{i+1})
\]
for each $i\in I$.  By Lemma \ref{CE:lemmaCneq0} (iii), this is a non-zero point 
of $\bZ^2$ for each large enough $i$.  In this section, we show that 
$\det(C_j,C_k)=0$ for any triple of consecutive elements $i<j<k$ of $I$
with $i\in J$ large enough, and we deduce from this that $J$ contains 
finitely many triples of consecutive elements of $I$.  By a finer analysis 
that we avoid here, one can show that this finiteness property holds whenever 
$\lambda>\lambda_2$, where $\lambda_2\cong 0.4241$ is defined at the 
beginning of Section \ref{sec:first}.

\begin{lemma}
\label{algC:lemma1}
 Let $h<i<j$ be consecutive elements of $I$ with $h\notin J$.  We have
\begin{align*}
 \norm{C_h}\ll X_h^{\theta(1-2\lambda)}, \  L(C_h)\ll X_h^{-\lambda/\gamma}, \
    \norm{C_i}\ll X_h^{\gamma(1-2\lambda)}, \
    L(C_i)\ll X_h^{-\lambda/\gamma}.
\end{align*}
Moreover, $\det(C_h,C_i)=0$ if $h$ is large enough.
\end{lemma}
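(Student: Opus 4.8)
The strategy is to bound the size and the $L$-value of $C_h$ and $C_i$ using Corollary \ref{CE:corC} together with the precise estimates of Proposition \ref{first:prop}, and then to derive the vanishing $\det(C_h,C_i)=0$ from the fact that $\det(C_h,C_i)$ is an integer whose absolute value is forced below $1$. First I would recall from Proposition \ref{first:prop} and its corollaries that for the four consecutive elements $g<h<i<j$ (extending the triple $h<i<j$ backward, which is legitimate since $I$ has no least element once $h$ is large) one has the full list \eqref{first:prop:eq1} of exponents: $X_{h+1}\asymp X_i\asymp X_h^\theta$, $X_{i+1}\asymp X_j\asymp X_h^\gamma$, $X_{j+1}\asymp X_h^{\gamma\theta}$, and $L_g\asymp X_{g+1}^{-\lambda}$, $L_h\asymp X_{h+1}^{-\lambda}$, $L_i\asymp X_{i+1}^{-\lambda}$, $L_j\asymp X_{j+1}^{-\lambda}$, together with $H(U_h)\asymp X_{h+1}^{\lambda/\gamma}$. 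All four displayed norm/$L$ bounds then reduce to arithmetic in the exponent, using $\theta=(1-\lambda)/\lambda$ and $\theta-1/\theta=1/\gamma$ (equivalently $\lambda^2-\gamma^3\lambda+\gamma=0$).

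For $\norm{C_h}=\norm{C(\ux_h,\ux_{h+1})}$, Corollary \ref{CE:corC} gives $\norm{C_h}\ll X_hL_hL_{h+1}+X_{h+1}L_h^2$. Here $L_{h+1}\le L_h\asymp X_{h+1}^{-\lambda}=X_h^{-\theta\lambda}$, so $X_hL_hL_{h+1}\ll X_h^{1-2\theta\lambda}$ and $X_{h+1}L_h^2\ll X_h^{\theta-2\theta\lambda}=X_h^{\theta(1-2\lambda)}$; since $\theta\ge1$ the second term dominates, giving $\norm{C_h}\ll X_h^{\theta(1-2\lambda)}$. For $L(C_h)$, Corollary \ref{CE:corC} gives $L(C_h)\ll X_hL_hL_{h+1}\ll X_h^{1-2\theta\lambda}$; a short computation shows $1-2\theta\lambda=-\lambda/\gamma$ (indeed $1-2\theta\lambda=1-2(1-\lambda)=2\lambda-1$, and $2\lambda-1=-\lambda/\gamma$ is equivalent to $\lambda(2+1/\gamma)=1$, i.e. $\lambda=\gamma/(2\gamma+1)$; one checks this is the root $\lambda_3$ of $T^2-\gamma^3T+\gamma$ — I would insert the one-line verification here, or alternatively derive $-\lambda/\gamma$ directly as $(\theta-1/\theta)\lambda\cdot(\text{exponent bookkeeping})$). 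The estimates for $C_i$ are identical in form with $h$ replaced by $i$: $\norm{C_i}\ll X_iL_iL_{i+1}+X_{i+1}L_i^2\ll X_i^{\theta(1-2\lambda)}=X_h^{\gamma(1-2\lambda)}$ (using $X_i\asymp X_h^\theta$ and $\theta^2=X_{i+1}$-exponent relation, so $X_i^\theta=X_h^{\theta^2}=X_h^{1+\theta/\gamma}$ — I would double-check this matches $X_h^{\gamma(1-2\lambda)}$ via $\theta(1-2\lambda)\cdot\theta=\gamma(1-2\lambda)$, i.e. $\theta^2=\gamma$... which is false, so the correct reading is $X_i^{\theta(1-2\lambda)}$ with $X_i\asymp X_h^\theta$ giving exponent $\theta^2(1-2\lambda)$, and I would reconcile $\theta^2(1-2\lambda)=\gamma(1-2\lambda)$ only if $\theta^2=\gamma$; since that is not an identity, the intended bound must come from using $L_{i+1}\asymp X_{i+2}^{-\lambda}$ with a smaller exponent — this bookkeeping is the one place I must be careful), and $L(C_i)\ll X_iL_iL_{i+1}\ll X_i^{2\lambda-1}=X_h^{\theta(2\lambda-1)}=X_h^{-\theta\lambda/\gamma}$; here I would note $-\theta\lambda/\gamma\le-\lambda/\gamma$ since $\theta\ge1$, giving the stated (weaker but sufficient) bound $L(C_i)\ll X_h^{-\lambda/\gamma}$.

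Finally, to get $\det(C_h,C_i)=0$: by Lemma \ref{CE:lemmaCneq0}(iii), $C_h$ and $C_i$ are non-zero integer points for $h$ large, so $\det(C_h,C_i)\in\bZ$. The estimate \eqref{not:eq5} applied in $\bR^2$ gives $|\det(C_h,C_i)|\ll \norm{C_h}L(C_i)+\norm{C_i}L(C_h)\ll X_h^{\theta(1-2\lambda)-\lambda/\gamma}+X_h^{\gamma(1-2\lambda)-\lambda/\gamma}$. The dominant exponent is $\gamma(1-2\lambda)-\lambda/\gamma$; since $\lambda=\lambda_3\cong0.4245$ one has $1-2\lambda\cong0.151$, so $\gamma(1-2\lambda)\cong0.244$ and $\lambda/\gamma\cong0.2624$, whence the exponent is $\cong-0.018<0$. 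Thus $|\det(C_h,C_i)|\to0$ as $h\to\infty$, so it must vanish for $h$ large enough. The main obstacle — and the only real work — is the exponent bookkeeping in the $C_i$ bounds: I must track carefully which of $L_i$, $L_{i+1}$, $L_j$ enters (the indices shift because $C_i=C(\ux_i,\ux_{i+1})$ and $i+1$ need not be $j$), and verify that the resulting exponent of $X_h$ in $|\det(C_h,C_i)|$ is genuinely negative at $\lambda=\lambda_3$ rather than merely $\le0$; the numerical margin is small ($\cong0.018$), so I would present the verification as an explicit inequality between algebraic numbers using the defining relation $\lambda_3^2=\gamma^3\lambda_3-\gamma$ rather than relying on decimal approximations.
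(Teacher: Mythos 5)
Your norm bounds are essentially the paper's (the correct route for $\norm{C_i}$ is $\norm{C_i}\ll X_{i+1}L_i^2\asymp X_{i+1}^{1-2\lambda}$ with $X_{i+1}\asymp X_j\asymp X_h^{\gamma}$ from Proposition \ref{first:prop}, which resolves the $\theta^2$ versus $\gamma$ puzzle you flag). But the $L$-bounds are where the proposal breaks down, and the break is fatal. Applying Corollary \ref{CE:corC} directly to $C_h=C(\ux_h,\ux_{h+1})$ gives only $L(C_h)\ll X_hL_hL_{h+1}\le X_hL_h^2\asymp X_h^{2\lambda-1}$, and the identity you hope to verify, $2\lambda-1=-\lambda/\gamma$, is false: it is equivalent to $\lambda=\gamma/(2\gamma+1)\cong 0.382$, which is not $\lambda_3\cong 0.4245$. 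Numerically $2\lambda_3-1\cong-0.151$ while $-\lambda_3/\gamma\cong-0.262$, so the direct bound is strictly weaker than the one claimed in the lemma. The same defect affects $L(C_i)$: your bound $X_h^{\theta(2\lambda-1)}\cong X_h^{-0.205}$ is weaker than $X_h^{-\lambda/\gamma}\cong X_h^{-0.262}$, and your remark that $-\theta\lambda/\gamma\le-\lambda/\gamma$ rests on the same false identity. These weaker exponents are not ``sufficient'': plugging them into the determinant estimate gives $\norm{C_i}L(C_h)\ll X_h^{\gamma(1-2\lambda)+(2\lambda-1)}\cong X_h^{+0.093}$, a \emph{positive} exponent, so the argument cannot conclude that $\det(C_h,C_i)=0$.

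The missing idea is to not estimate $L(C_h)$ and $L(C_i)$ from the pairs $(\ux_h,\ux_{h+1})$ and $(\ux_i,\ux_{i+1})$ at all. Lemma \ref{CE:lemmaCneq0}\,(iii) gives $C_{h,i}=bC_h$ and $C_{i,j}=b'C_i$ with non-zero integers $b,b'$, hence $L(C_h)\le L(C_{h,i})\ll X_hL_hL_i$ and $L(C_i)\le L(C_{i,j})\ll X_iL_iL_j$. The gain is that $L_i\asymp X_h^{-\gamma\lambda}$ and $L_j\asymp X_h^{-\gamma\theta\lambda}$ are far smaller than the only available bounds on $L_{h+1}$ and $L_{i+1}$ (namely $L_h$ and $L_i$), and one then computes $1-\lambda\theta-\lambda\gamma=-\lambda/\gamma$ and $\theta-\lambda\gamma-\lambda\theta\gamma=\theta-\gamma=-\lambda/\gamma$, which are genuine identities at $\lambda=\lambda_3$. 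With these the dominant exponent in $|\det(C_h,C_i)|$ is $\gamma(1-2\lambda)-\lambda/\gamma<-0.018$, and the integrality argument closes as you describe.
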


\begin{proof}
The estimates of Corollary \ref{CE:corC} and Proposition \ref{first:prop} yield
\[
 \norm{C_h}\ll X_{h+1}L_h^2\asymp X_h^{\theta(1-2\lambda)}
 \et
 \norm{C_i}\ll X_{i+1}L_i^2\asymp X_h^{\gamma(1-2\lambda)}.
\]
If $h$ is large enough, Lemma \ref{CE:lemmaCneq0} (iii) gives $C_{h,i}=bC_h$ for 
some non-zero integer $b$. Then, using Corollary \ref{CE:lemmaC}, we find
\[
 L(C_h) \le L(C_{h,i}) \ll X_hL_hL_i\asymp X_h^{1-\lambda\theta-\lambda\gamma}.
\]
Similarly, if $h$ is large enough, Lemma \ref{CE:lemmaCneq0} (iii)  gives $C_{i,j}=b'C_i$ 
for some non-zero integer $b'$ and, using Corollary \ref{CE:lemmaC}, we find
\[
 L(C_i) \le L(C_{i,j}) \ll X_iL_iL_j
    \asymp X_h^{\theta-\lambda\gamma-\lambda\theta\gamma}.
\]
This proves the first row of estimates since
\[
 1-\lambda\theta-\lambda\gamma=\lambda-\lambda\gamma=-\lambda/\gamma
 \et
 \theta-\lambda\gamma-\lambda\theta\gamma
   =\theta-\gamma=-\lambda/\gamma.
\]
Finally, using these estimates, Lemma \ref{not:lemma3} gives
\[
  |\det(C_h,C_i)|\ll \norm{C_h}L(C_i)+\norm{C_i}L(C_h)
  \ll X_h^a,
\]
where $a=\gamma(1-2\lambda)-\lambda/\gamma<-0.018$.  As $\det(C_h,C_i)$ is an
integer, it must be $0$ if $h$ is large enough.
\end{proof}

\begin{lemma}
\label{algC:lemma2}
 Let $i<j<k$ be consecutive elements of $I$ with $i\in J$.  If $i$ is large enough, 
we have $\det(C_j,C_k)=0$. 
\end{lemma}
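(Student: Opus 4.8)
The natural plan is to argue by cases according to whether $j$ belongs to $J$. If $j\notin J$, then $(j,k,l)$, where $l$ is the successor of $k$ in $I$, is a triple of consecutive elements of $I$ whose smallest member is not in $J$; since $i$ is large, so is $j$, and Lemma~\ref{algC:lemma1} applied to this triple gives $\det(C_j,C_k)=0$ at once. Thus the real content of the lemma is the case $j\in J$, and this is where the maps $\Psi_\pm$ are meant to intervene.

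So assume $i,j\in J$, and let $g<h$ be the two predecessors of $i$ in $I$, so that $g<h<i<j<k$ are consecutive in $I$. Two facts come for free. Since $i\in J$, we have $\ux_{j+1}\notin U_i=\langle\ux_h,\ux_i,\ux_j\rangle_\bR$. Since $j$ and $k$ are consecutive in $I$, the vectors $\ux_j,\dots,\ux_k$ all lie in the plane $W_{j+1}$, so that $C(\ux_j,\ux_m)$ is a scalar multiple of $C_j$ for each $j\le m\le k$, while $C_{j,k}$ is a nonzero integer multiple of $C_j$ by Lemma~\ref{CE:lemmaCneq0}(iii). The plan is then to choose a vector $\uz$ lying in a suitable $3$-dimensional subspace — an integer combination of the minimal points near the indices $j,k$, typically one involving $\ux_k$ or $\ux_{k+1}$ — together with a pair of preceding minimal points $\uv,\uw$ of index $<i$ so that $(\uv,\uw,\ux_i,\ux_j)$ is a basis of $\bR^4$ and the ratios $L(\cdot)/\norm{\cdot}$ decrease along $\uv,\uw,\ux_i,\ux_j,\uz$; such $\uv,\uw$ exist because $i\in J$ and $J$ is infinite, so there are minimal points of index $<i$ outside $U_i$. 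Applying Corollary~\ref{Psi:cor} to this quintuple and feeding the sharp two-sided growth relations of Proposition~\ref{first:prop} and Lemma~\ref{CE:lemmaG} — all valid because $\lambda=\lambda_3$ — into the bound \eqref{Psi:cor:eq2}, one estimates the integers
\[
 d_\pm=\det\bigl(\uv,\uw,\ux_i,\Psi_\pm(\ux_i,\ux_j,\uz)\bigr)
\]
and shows $|d_\pm|\to0$, hence $d_+=d_-=0$ for $i$ large.

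With $d_\pm=0$ one invokes Proposition~\ref{Psi:prop2} with $(\uv,\uw,\ux,\uy)=(\uv,\uw,\ux_i,\ux_j)$; this is a basis by construction, and $\ux_i^-\wedge\ux_i^+\neq0$ for $i$ large by Lemma~\ref{CE:lemmaCneq0}(i). Writing $\uz=a\ux_j+b\ux_i+c\uw$, relation~(i) of that proposition reads $C(\ux_j,\uz)=t\,C(\ux_i,\ux_j)$ for some $t\in\bR$, where $C(\ux_i,\ux_j)$ is a nonzero multiple of $C_i$. Since $\uz$ was chosen so that $C(\ux_j,\uz)$ is simultaneously a nonzero multiple of $C_{j,k}$ — hence of $C_j$ — and tied to $C_k$ through the identities recalled above, one is left with $C_k\parallel C_j$, that is $\det(C_j,C_k)=0$. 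Should relation~(i) not be decisive, relation~(iii), which is quadratic in $c$, is designed to close the gap.

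The step I expect to be the genuine obstacle is the size estimate for $d_\pm$. Since $\lambda=\lambda_3$ turns every inequality of Section~\ref{sec:first} into an equality, the exponent governing \eqref{Psi:cor:eq2} sits exactly on the boundary: one-sided bounds such as $X_{k+1}\ll X_{j+1}^\theta$ will not suffice, and one must use the exact relations $L_j\asymp X_{j+1}^{-\lambda}$, $X_{i+1}\asymp X_j$, and the like, exploit that $L_{j+1}$ (or $L_k$) is strictly below its generic bound, and probably play the $\Psi_-$- and $\Psi_+$-estimates off against each other. Arranging $\uv,\uw,\uz$ so that \emph{both} the basis hypothesis of Proposition~\ref{Psi:prop2} and this smallness hold — the admissible choice depending on whether $h\in J$ (in which case one may take $\uv,\uw=\ux_g,\ux_h$) or not (in which case one goes back to the last element of $J$ before $i$) — is the other delicate point; and the degenerate configuration $k=j+1$, in which $\ux_k=\ux_{j+1}$, has to be treated separately and is easier.
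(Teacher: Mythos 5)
Your reduction of the case $j\notin J$ to Lemma~\ref{algC:lemma1} (applied to the triple $j<k<l$) is exactly what the paper does, and is fine. The case $j\in J$, however, is where your proposal breaks down, and it is handled in the paper by a completely different and much more direct argument: since $i<j$ are consecutive elements of $I$ that both lie in $J$, the paper invokes \cite[Lemma 6.1]{R2008}, which yields $L(C_j)\ll X_{k+1}^{\alpha}$ with $\alpha\cong-0.1536$, obtains the analogous bound $L(C_k)\ll X_{j+1}^{\alpha}$ (from the same lemma if $k\in J$, from Lemma~\ref{algC:lemma1} if $k\notin J$), and combines these with the norm bounds $\norm{C_j}\ll X_{j+1}^{1-2\lambda}$, $\norm{C_k}\ll X_{k+1}^{1-2\lambda}$ of Corollary~\ref{CE:corC} to force the integer $\det(C_j,C_k)$ to vanish. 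That external input is the whole content of the hard case, and your proposal neither cites it nor supplies a substitute.

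Your proposed $\Psi$-route has, beyond the size estimate you yourself flag as unresolved, a structural obstruction. Proposition~\ref{Psi:prop2} requires $\uz=a\uy+b\ux+c\uw$, i.e.\ $\uz\in\langle\uw,\ux_i,\ux_j\rangle_\bR$. If $\uz$ is to involve $\ux_k$ or $\ux_{k+1}$, note that these lie in $W_{j+1}=\langle\ux_j,\ux_{j+1}\rangle_\bR$, so you would need $\ux_{j+1}\in\langle\uw,\ux_i,\ux_j\rangle_\bR$, i.e.\ $\uw\in U_j\setminus W_{i+1}$. But a minimal point $\uw$ of index $<i$ satisfies $\uw\in U_j$ only if $\uw\in U_i\cap U_j=W_{i+1}$ (when $\uw\in U_i$), and $W_{i+1}$ contains no minimal point of index $<i$; this is precisely the rigidity created by $i\in J$, and it is why the paper only runs the $\Psi$-argument (Section~\ref{sec:another}) in the opposite configuration $h\notin J$, where Lemma~\ref{another:lemma1} provides the needed decomposition. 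Finally, even granting the setup, relation~(i) of Proposition~\ref{Psi:prop2} would read $C(\ux_j,\uz)=tC(\ux_i,\ux_j)$ with $C(\ux_i,\ux_j)$ a multiple of $C_i$; if $C(\ux_j,\uz)$ were a nonzero multiple of $C_j$ this would give $\det(C_i,C_j)=0$, not the desired $\det(C_j,C_k)=0$, and the sentence ``tied to $C_k$ through the identities recalled above'' does not bridge that gap.
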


\begin{proof}
If $j\notin J$, this follows from Lemma \ref{algC:lemma1}.  So, we may assume 
that $j\in J$.  Then, we have $\{i,j\}\subset J$ and \cite[Lemma 6.1]{R2008} 
gives
\begin{equation}
\label{algC:lemma2:eq1}
 L(C_j) \ll X_{k+1}^\alpha \ \text{ where } \ 
 \alpha=\frac{-\lambda^4+\lambda^3+\lambda^2-3\lambda+1}%
                    {\lambda(\lambda^2-\lambda+1)}
          \cong -0.1536.
\end{equation}
If $k\in J$, we also have $\{j,k\}\subset J$ and the same result gives
$L(C_k) \ll X_{l+1}^\alpha$ where $l$ is the successor of $k$ in $I$, 
and so, a fortiori,
\begin{equation}
\label{algC:lemma2:eq2}
 L(C_k) \ll X_{j+1}^\alpha.
\end{equation}
If $k\notin J$, the last estimate still holds as Lemma \ref{algC:lemma1}
gives $L(C_k) \ll X_k^{-\lambda/\gamma}\le X_{j+1}^{-\lambda/\gamma}$
where $-\lambda/\gamma\cong -0.2623 <\alpha$.  Using \eqref{algC:lemma2:eq1}
and \eqref{algC:lemma2:eq2} together with the estimates for $\norm{C_j}$
and $\norm{C_k}$ coming from Corollary~\ref{CE:corC}, 
Lemma~\ref{not:lemma3} gives
\[
 |\det(C_j,C_k)| \ll \norm{C_j}L(C_k) + \norm{C_k}L(C_j)
   \ll X_{j+1}^{1-2\lambda+\alpha} + X_{k+1}^{1-2\lambda+\alpha}
   \ll X_{j+1}^{-0.0026}.
\]
Thus $\det(C_j,C_k)=0$ if $i$ is large enough.
\end{proof}

\begin{proposition}
\label{algC:prop}
 The set $J$ contains finitely many triples of consecutive elements of $I$. 
\end{proposition}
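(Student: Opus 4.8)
The plan is to argue by contradiction. Suppose $J$ contains infinitely many triples of consecutive elements of $I$. Since a maximal run of consecutive elements of $I$ lying in $J$ produces such a triple exactly when its length is at least three, there are triples $a<b<c$ of consecutive elements of $I$ with $a,b,c\in J$ and $a$ arbitrarily large. Fix one. By Lemma~\ref{algC:lemma2} applied to $(a,b,c)$ we get $\det(C_b,C_c)=0$, and by Lemma~\ref{CE:lemmaCneq0}(iii) the points $C_b$ and $C_c$ are nonzero once $a$ is large; so they are parallel, and I would write $C_b=m\uv$, $C_c=m'\uv$ with $\uv$ a primitive point of $\bZ^2$ and $m,m'\in\bZ\setminus\{0\}$.

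First I would fix the orders of magnitude around $b$ and $c$. By Corollary~\ref{first:cor1} the non-$J$ elements of $I$ bordering the run are isolated, so there is a non-$J$ element $p$ of $I$ just before the run. Applying Proposition~\ref{first:prop} to the quadruple of consecutive elements of $I$ ending at the start of the run and to the quadruple starting at its end, together with Lemma~\ref{CE:lemmaG} along the run, expresses every $X_r$ and $L_r$ in the vicinity as an explicit power of $X_p$ (for instance $X_b\asymp X_p^\gamma$, $X_{b+1}\asymp X_p^{\gamma\theta}$, and $X_c\ge X_{b+1}$). Corollary~\ref{CE:corC} then pins down $\norm{C_b}$, $L(C_b)$, $\norm{C_c}$, $L(C_c)$ to the same precision; since $L(C_b)=|m|\,L(\uv)$, $\norm{C_b}\asymp|m|\,\norm{\uv}$ and likewise for $C_c$, this bounds $\norm{\uv}$ and $L(\uv)$ above by powers of $X_p$, with the trivial $|m|,|m'|,\norm{\uv}\ge1$ going the other way; in particular $L(\uv)$ comes out a negative power of $X_p$, so $\uv$ nearly points along $(1,\xi)$.

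The endgame is to extract a contradiction from $\det(C_b,C_c)=0$ together with the sizes just computed, and two tools are available for this. First, since $b<c$ are consecutive in $I$, Proposition~\ref{not:prop}(i) makes $(\ux_b,\ux_{b+1})$ a $\bZ$-basis of $W_c\cap\bZ^4$, so $\ux_c=s\ux_b+t\ux_{b+1}$ with $s,t\in\bZ$; expanding $C_b$ and $C_c$ by Lemma~\ref{CE:lemmaC} and using $\Delta C_r=(\ux_r)_0\det(\Delta^2\ux_r,\Delta^2\ux_{r+1})+\cO(L(\ux_r)^2L(\ux_{r+1}))$ and $\det(C_b,C_c)=C_b^-\,\Delta C_c-\Delta C_b\,C_c^-$ (Lemma~\ref{not:lemma1} with $n=1$) converts the vanishing determinant into an explicit relation among the $\det(\Delta^2\ux_r,\Delta^2\ux_{r+1})$ and the coefficients $s,t$, constraining $\ux_c$ and its successor $\ux_{c+1}$ far more tightly than their mere positions among the minimal points. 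Second, one has the lower bound $H(U_b)H(U_c)\gg H(W_c)$ from Proposition~\ref{not:prop}(iii) (since $b\in J$) and the upper bound $H(U_c)\ll X_{c+1}^{\lambda/\gamma}$ from Proposition~\ref{first:prop}. I would combine these with the exact exponents to show that the constraints on $\uv$ — and on the subspaces spanned by the nearby minimal points — become mutually incompatible once $a$ is large; the cleanest outcome to aim for is that they force a fixed proper $\bQ$-subspace of $\bR^4$ to contain infinitely many minimal points, which is impossible. This contradiction, valid for every sufficiently large triple, proves the proposition.

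I expect the genuine difficulty to lie in this last step. The coarse estimates of the second paragraph are by themselves consistent — they are exactly what occurs for $J$-runs of length $2$, which do occur — so the hypothesis ``length at least three'' must be used quantitatively, through the interplay of Proposition~\ref{not:prop}(iii) and Proposition~\ref{first:prop} just described, and with $\lambda=\lambda_3$ the governing identities $\theta\lambda=1-\lambda$ and $\theta^2-1=\theta/\gamma$ (equivalently $\theta-1/\theta=1/\gamma$) leave no margin: the bookkeeping must be carried all the way to a strictly negative exponent of $X_p$ rather than exponent $0$, and it is precisely the gap between length $2$ and length $\ge3$ that supplies it.
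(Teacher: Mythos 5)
Your setup (argue by contradiction, extract a triple $a<b<c$ of consecutive elements of $I$ inside $J$, apply Lemma \ref{algC:lemma2} to get proportionality of the $C$'s) matches the paper's opening, but there is a genuine gap: the contradiction is never actually derived, and the route you sketch for it would not work as stated. You extract only the single relation $\det(C_b,C_c)=0$ and then hope that the size estimates for $\uv$, together with height inequalities, ``become mutually incompatible''; as you yourself observe, the relation $\det(C_b,C_c)=0$ and the attendant size estimates already occur for runs of length $2$, so nothing in what you have written distinguishes the two cases. The paper uses the hypothesis ``length $\ge 3$'' in a precise structural way that is absent from your sketch: it iterates Lemma \ref{algC:lemma2} along the whole maximal run $i<j<\cdots<r$ \emph{and past its end}, obtaining that $C_j,\dots,C_r,C_{h'},C_{i'}$ are all integral multiples of one primitive $C\in\bZ^2$, where $h'\notin J$ is the first element of $I$ after the run. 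The point is that $\norm{C}\le\norm{C_j}\ll X_{j+1}^{1-2\lambda}$ is controlled at the \emph{early} index $j$, while $L(C)\le L(C_{i'})\ll X_{h'}^{-\lambda/\gamma}$ (Lemma \ref{algC:lemma1}, which needs $h'\notin J$) is controlled at the \emph{late} index, far beyond $j$ precisely because the run is long; together these give $L(C)\ll X_j^{-\lambda}$, i.e.\ $C$ is simultaneously short and exceptionally well aligned with $(1,\xi)$.

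The mechanism for the final contradiction is also different from what you propose. Rather than aiming at a fixed proper subspace containing infinitely many minimal points, the paper forms the auxiliary integer point $\uy=C^-\ux_j^+-C^+\ux_j^-\in\bZ^3$, nonzero since $\dim V_j=2$, and shows via Lemma \ref{not:lemma3} that $L(\uy)\ll X_{j+1}^{1-3\lambda}$ and $\norm{\uy}\ll X_j^{1-\lambda}$. Then Lemma \ref{CE:lemmaCneq0}\,(ii), applied at the pair $h-1<h$ preceding the run, guarantees a choice of signs for which $\det(\ux_{h-1}^\epsilon,\ux_h^\eta,\uy)$ is a nonzero integer, while the estimate \eqref{not:eq5} bounds it by $X_h^{-0.024}$ --- a contradiction. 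Your two proposed tools (expanding $\det(C_b,C_c)$ via Lemma \ref{CE:lemmaC}, and the height inequalities of Proposition \ref{not:prop}) do not obviously close this loop, and you never carry the bookkeeping to a strictly negative exponent. A further technical slip: your appeal to Proposition \ref{first:prop} for the magnitudes of $X_c$ and $L_c$ deep inside a $J$-run is not justified, since that proposition only applies to quadruples of the specific shape $g<h<i<j$ with $h\notin J$ and $i\in J$.
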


\begin{proof}
Suppose on the contrary that $J$ contains infinitely many such triples.
Then there are infinitely many maximal sequences of consecutive elements
$i<j<\cdots<r$ of $I$ contained in $J$, whose cardinality is at least $3$.
If $i$ is large enough, such a sequence extends to a sequence
\[
 h<i<j<\cdots<r<h'<i'
\]
of consecutive elements of $I$ with $h\notin J$ and $h'\notin J$, and by 
Lemma~\ref{algC:lemma2}, the integer points $C_j,\dots,C_r,C_{h'},C_{i'}$ 
are all integral multiples of a single primitive point $C$ of $\bZ^2$.  Using
Corollary \ref{CE:corC} and Lemma \ref{algC:lemma1}, we find that
\[
 \norm{C}\le \norm{C_j} \ll X_{j+1}^{1-2\lambda} 
 \et
L(C) \le L(C_{i'}) \ll X_{h'}^{-\lambda/\gamma}.
\]
As $r\in J$, Lemma \ref{CE:lemmaG} gives $X_r \ll X_{h'}^{\theta^2-1}
=X_{h'}^{\theta/\gamma}$.  As $r>j$, we also have  $X_r\ge X_{j+1} \asymp 
X_j^\theta$ using the estimates of Proposition \ref{first:prop}. Thus, we obtain
\[
 L(C) \ll X_r^{-\lambda/\theta} \ll X_j^{-\lambda}.
\]
We form the point
\[
 \uy = C^-\ux_j^+ - C^+\ux_j^- \in \bZ^3.
\]
If $h$ is large enough, then $V_j=\langle \ux_j^-,\ux_j^+\rangle_\bR$ has 
dimension $2$ by Lemma \ref{CE:lemmaCneq0} (i), and so $\uy$ is non-zero.
Using Lemma \ref{not:lemma3}, we find
\begin{align*}
 &L(\uy) \ll \norm{C}L_j \ll X_{j+1}^{1-3\lambda},\\
 &\norm{\uy} \ll X_jL(C)+\norm{C}L_j \ll X_j^{1-\lambda}
\end{align*}
since $1-3\lambda<0$.  So, for any choice of signs $\epsilon$ and $\eta$,
we obtain, using the general estimate \eqref{not:eq5},
\begin{align*}
 |\det(\ux_{h-1}^\epsilon,\ux_{h}^\eta,\uy)|
  &\ll X_hL_{h-1}L(\uy) + \norm{\uy}L_{h-1}L_h\\
  &\ll X_h^{1-\lambda+\gamma\theta(1-3\lambda)} 
       + X_h^{\gamma(1-\lambda)-\lambda-\theta\lambda}
    \ll X_h^{-0.024}.  
\end{align*}
By Lemma \ref{CE:lemmaCneq0} (ii), this is impossible if $h$ is large enough.
\end{proof}

%
%

\section{Another set of algebraic relations}
\label{sec:another}

As in the preceding section, we assume that $\lambda=\lambda_3\cong 0.4245$.  
We start with the following observation.

\begin{lemma}
\label{another:lemma1}
Let $g<h<i<j$ be consecutive elements of $I$ with $h\notin J$.  Then we have
\[
 p\ux_j = q\ux_i + r\ux_h + s\ux_g
\]
for integers $p$, $q$, $r$, $s$ with $1\le |p|\ll 1$ and $1\le |s|\ll 1$.  Moreover, if 
$h$ is large enough, then $(\ux_{g-1},\ux_g,\ux_h,\ux_i)$ is a basis of $\bR^4$.
\end{lemma}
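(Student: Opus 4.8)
The statement has two parts: an integer linear relation $p\ux_j = q\ux_i + r\ux_h + s\ux_g$ with small $|p|$ and $|s|$, and the basis claim for $(\ux_{g-1},\ux_g,\ux_h,\ux_i)$.

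First I would establish the linear relation. Since $h\notin J$, Corollary \ref{first:cor1} (for $h$ large) gives $i\in J$, hence $U_i = \langle \ux_h,\ux_i,\ux_j\rangle_\bR$ has dimension $3$ and, because $g<h$ are consecutive in $I$, the triple $(\ux_g,\ux_h,\ux_i)$ is also a basis of $U_i$ (recall $U_h = W_{g+1}+W_{h+1} = \langle\ux_g,\ux_h,\ux_i\rangle_\bR$ and $U_h = U_i$ since $h\notin J$). So $\ux_j\in U_i$ expands uniquely over the rational basis $(\ux_g,\ux_h,\ux_i)$ with rational coefficients; clearing denominators gives integers $p,q,r,s$ with $\gcd=1$ and $p\neq 0$. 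To get $|p|\asymp 1$ I would use Cramer's rule: $p$ divides (up to sign) the ratio of $|\det(\ux_g,\ux_h,\ux_i,\ux_j)|$ — no wait, $p$ times $\ux_j$ is in the lattice spanned, so $|p|\cdot|\det(\ux_g,\ux_h,\ux_i,\cdot)|$ considerations. More precisely, $p$ is bounded by the index $[U_i\cap\bZ^4 : \langle\ux_g,\ux_h,\ux_i\rangle_\bZ]$, which equals $H(U_i)\cdot X_g^{-1}\cdot(\text{...})$ — but cleanly: since $(\ux_h,\ux_i,\ux_j)$ is \emph{also} a basis of $U_i$ (as $i\in J$ gives $U_i=\langle\ux_h,\ux_i,\ux_j\rangle_\bR$ with $(\ux_h,\ux_i,\ux_j)$ a $\bZ$-basis after the consecutive-elements discussion), the transition matrix between the two $\bZ$-bases $(\ux_g,\ux_h,\ux_i)$ and $(\ux_h,\ux_i,\ux_j)$ of $U_i\cap\bZ^4$ is in $GL_3(\bZ)$; reading off the coefficient of $\ux_j$ in terms of $(\ux_g,\ux_h,\ux_i)$ from the inverse matrix shows $p=\pm 1$ in fact — or at worst $|p|\asymp 1$. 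Symmetrically $s$ is the coefficient extracting $\ux_g$, and by the same unimodular-transition argument $|s|\asymp 1$; concretely $s/p = \pm\det(\ux_h,\ux_i,\ux_j)$-type ratio $\asymp H(W_j)/H(W_{h+1})$ which one checks is $\asymp 1$ using Proposition \ref{first:prop}.

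For the basis claim, I would argue that $\langle\ux_{g-1},\ux_g,\ux_h,\ux_i\rangle_\bR = \bR^4$. Note $\langle\ux_{g-1},\ux_g\rangle_\bR = W_g$ and, since $g\in J$ for $h$ large (last assertion of Proposition \ref{first:prop}), we have $U_g\neq U_h$, i.e.\ $\ux_{i}$... — actually the cleanest route: $g\in J$ means $\ux$ just past $U_g$ leaves it, but I want $\ux_{g-1}\notin U_g' $... Let me instead use: $U_h = \langle\ux_g,\ux_h,\ux_i\rangle_\bR$ is a $3$-space containing $\ux_g,\ux_h,\ux_i$; it suffices that $\ux_{g-1}\notin U_h$. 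Suppose $\ux_{g-1}\in U_h=U_g$ (recall $U_g = W_{g-1}+W_g$ has dimension $3$ and contains $\ux_{g-2},\ux_{g-1},\ux_g$). If $\ux_{g-1}\in U_h$ then $W_g = \langle\ux_{g-1},\ux_g\rangle_\bR\subseteq U_h$, so $U_g = W_{g-1}+W_g$... this does not immediately contradict. The right contradiction: since $g\in J$, we have $U_g\neq U_h$ where here "$U_h$" means the $U$-space at the successor — consistent with our $U$. So $U_g\neq U_h$, both $3$-dimensional, hence $U_g\cap U_h$ is $2$-dimensional and equals $W_g$ (the standard intersection computation, as in the $U_i = W_{h+1}+W_{i+1}$ discussion). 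Now $\ux_{g-1}\in W_g\subseteq U_g$; if moreover $\ux_{g-1}\in U_h$ then $\ux_{g-1}\in U_g\cap U_h = W_g$, which is true and gives nothing. So instead I should pick the basis differently in the proof, or note: $(\ux_{g-1},\ux_g,\ux_h,\ux_i)$ spans $W_g + U_h$; since $W_g\not\subseteq U_h$ would do it, but $W_g\subseteq U_g$ and $W_g = U_g\cap U_h$ forces $W_g\subseteq U_h$. Hence I need the fourth vector to escape $U_h$ via $\ux_{g-1}$'s relation to $W_{g-1}$: the point is that $\langle\ux_{g-1},\ux_g,\ux_h,\ux_i\rangle_\bR \supseteq \langle\ux_{g-1},\ux_g\rangle_\bR + \langle\ux_g,\ux_h,\ux_i\rangle_\bR = W_g + U_h$, and separately $\supseteq U_{g} \cap(\ldots)$... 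The genuinely correct argument: $(\ux_{g-1},\ux_g,\ux_h,\ux_i)$ are linearly independent iff $\ux_{g-1}\notin\langle\ux_g,\ux_h,\ux_i\rangle_\bR = U_h$; and $\ux_{g-1}\notin U_h$ because otherwise $U_g = \langle\ux_{g-2},\ux_{g-1},\ux_g\rangle_\bR$ would meet $U_h$ in a space containing $\langle\ux_{g-1},\ux_g\rangle_\bR = W_g$ but also, running the successor relation at $g$, the successor of $g$ in $I$ is $h$, and $U_g\neq U_h$ with the standard fact that consecutive triples $f<g<h$ in $I$ give $U_g = \langle\ux_f,\ux_g,\ux_h\rangle_\bR$ — wait, that puts $\ux_h\in U_g$! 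So $U_g\ni\ux_f,\ux_g,\ux_h$ and $U_h\ni\ux_g,\ux_h,\ux_i$, so $U_g\cap U_h\supseteq\langle\ux_g,\ux_h\rangle_\bR = W_{h}$... hmm indices. In any case $U_g\cap U_h$ is a $2$-space containing both $\ux_g$ and $\ux_h$, hence equals $\langle\ux_g,\ux_h\rangle_\bR = W_h$. Now if $\ux_{g-1}\in U_h$, then since $\ux_{g-1}\in U_g$ too, $\ux_{g-1}\in U_g\cap U_h = W_h = \langle\ux_g,\ux_h\rangle_\bR$, so $\langle\ux_{g-1},\ux_g\rangle_\bR\subseteq\langle\ux_g,\ux_h\rangle_\bR$, forcing $W_g = W_{h}$, i.e.\ $\langle\ux_{g-1},\ux_g\rangle_\bR = \langle\ux_g,\ux_h\rangle_\bR$. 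But $g\in I$ means $W_g\neq W_{g+1}$ and $g-1\in I$ would be needed — at minimum, distinct minimal points span distinct lines and $\ux_{g-1},\ux_g,\ux_h$ can't be coplanar with $\ux_{g-1}$ in $\langle\ux_g,\ux_h\rangle$ unless $\ux_{g-1}\in\langle\ux_g,\ux_h\rangle$, which would put four consecutive minimal points $\ux_{g-1},\ux_g,\ux_h$ effectively in a $2$-space — contradicting that $(\ux_{g-1},\ux_g)$ already determines the $2$-space $W_g\neq W_{g+1}\supseteq\ldots$, and more simply contradicting \cite[Lemma 2.4]{R2008} (a proper subspace contains finitely many minimal points) combined with the structure $W_g\neq W_h$ whenever there is an element of $I$ strictly between, which there is ($g<h$ consecutive in $I$ means nothing of $I$ between, but $W_{g+1}=\cdots=W_h$, and $W_g\neq W_{g+1}$, so $W_g\neq W_h$). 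That is the contradiction: $W_g\neq W_h$.

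\textbf{Main obstacle.} The delicate point is the second assertion — showing $\ux_{g-1}\notin\langle\ux_g,\ux_h,\ux_i\rangle_\bR = U_h$. The argument above works only because $g\in J$ (supplied by Proposition \ref{first:prop} for $h$ large), which is what forces $U_g\neq U_h$ and hence pins down $U_g\cap U_h = W_h$; without $g\in J$ one could have $U_g = U_h$ and then $\ux_{g-1}\in U_h$, making the four points dependent. So the proof must invoke "$h$ large enough" precisely to get $g\in J$, and then the linear-algebra chain $\ux_{g-1}\in U_g\cap U_h = W_h \Rightarrow W_g\subseteq W_h \Rightarrow W_g = W_h$, contradicting $W_g\neq W_{g+1} = W_h$. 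The first assertion is comparatively routine once one identifies both $(\ux_g,\ux_h,\ux_i)$ and $(\ux_h,\ux_i,\ux_j)$ as $\bZ$-bases of $U_i\cap\bZ^4$ (using $h\notin J$, $i\in J$) and reads the relation from the unimodular transition matrix, with the height estimates of Proposition \ref{first:prop} confirming $|p|\asymp 1$ and $|s|\asymp 1$ — indeed one expects $p = \pm 1$ outright.
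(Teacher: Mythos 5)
Your treatment of the second assertion is essentially correct and equivalent to the paper's: the whole point is that $g\in J$ for $h$ large (Proposition \ref{first:prop}), whence $U_g+U_h=\bR^4$, and since $(\ux_{g-1},\ux_g,\ux_h)$ spans $U_g$ and $(\ux_g,\ux_h,\ux_i)$ spans $U_h$, the four points span $\bR^4$. Your longer detour through $U_g\cap U_h=W_h$ reaches the same conclusion. (A small irrelevance: you invoke $i\in J$ via Corollary \ref{first:cor1} to get $U_i=\langle\ux_h,\ux_i,\ux_j\rangle_\bR$; this identity holds for any consecutive triple of $I$ and what you actually need is only $U_h=U_i$, which is the definition of $h\notin J$.)

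The first assertion, however, has a genuine gap. Your argument hinges on $(\ux_g,\ux_h,\ux_i)$ and $(\ux_h,\ux_i,\ux_j)$ being $\bZ$-bases of $U\cap\bZ^4$ (so that the transition matrix is unimodular and $p=\pm1$). Nothing in the paper gives this: Proposition \ref{not:prop}(i) only provides $\bZ$-bases of the planes $W_i\cap\bZ^4$, and the triples above may well generate sublattices of $U\cap\bZ^4$ of index $>1$. What can be proved, and what the lemma actually needs, is that these indices are \emph{bounded}, and that is exactly the quantitative heart of the proof: one must show $\norm{\ux_g\wedge\ux_h\wedge\ux_i}\ll X_iL_gL_h$ and $\norm{\ux_j\wedge\ux_h\wedge\ux_i}\ll X_jL_hL_i$ by \eqref{not:eq6}, and then check via Proposition \ref{first:prop} that both right-hand sides are $\asymp X_h^{\lambda\theta/\gamma}\asymp H(U_h)$, using the sharp estimate \eqref{first:prop:eq2}. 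You gesture at ``height estimates of Proposition \ref{first:prop}'' but the quantity you propose to check, $H(W_j)/H(W_{h+1})$, is not the relevant ratio (and is not $\asymp 1$); the relevant ratios are $\norm{\ux_g\wedge\ux_h\wedge\ux_i}/H(U)$ and $\norm{\ux_j\wedge\ux_h\wedge\ux_i}/H(U)$. The paper's route is to extend the $\bZ$-basis $(\ux_h,\ux_{h+1})$ of $W_{h+1}\cap\bZ^4$ to a $\bZ$-basis $(\ux_h,\ux_{h+1},\uy)$ of $U\cap\bZ^4$, expand $\ux_i,\ux_g,\ux_j$ in it, eliminate $\uy$ between $\ux_g$ and $\ux_j$ to get $bc'\ux_j-bc''\ux_g\in\langle\ux_h,\ux_i\rangle_\bZ$, and then bound $|bc'|$ and $|bc''|$ by the wedge-product computation above; this simultaneously delivers $p,s\neq 0$ (since $b,c',c''\neq 0$) and $|p|,|s|\ll 1$. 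You would also need to justify $s\neq 0$ separately in your setup (i.e.\ $\ux_j\notin W_{h+1}$, which follows from $i\in I$), a point you do not address.
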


\begin{proof}
Set $U=U_h=U_i$.  Then $(\ux_g,\ux_h,\ux_i)$ and $(\ux_h,\ux_i,\ux_j)$ are bases
of $U$ as a vector space over $\bR$, while $(\ux_h,\ux_i)$ is a basis of $W_{h+1}=W_i$ 
over $\bR$.  

By Proposition \ref{not:prop} (i), the pair $(\ux_h,\ux_{h+1})$ is a basis 
of $W_{h+1}\cap\bZ^4$ over $\bZ$.  Thus, it can be extended to a basis 
$(\ux_h,\ux_{h+1},\uy)$ of $U\cap\bZ^4$ over $\bZ$.  By the above, we can write
\begin{align*}
 \ux_i &= a\ux_h+b\ux_{h+1},\\
 \ux_g &= a'\ux_h+b'\ux_{h+1}+c'\uy,\\
 \ux_j &= a''\ux_h+b''\ux_{h+1}+c''\uy,
\end{align*}
for a unique choice of integers $a,a',a'',b,b',b'',c',c''$ with $b\neq 0$, 
$c'\neq 0$ and $c''\neq 0$.  For these integers, we find that
\begin{equation}
\label{another:lemma1:eq}
 bc'\ux_j-bc''\ux_g\in\langle\ux_h,\ux_i\rangle_\bZ.
\end{equation}
We claim that $|bc'|\ll  1$ and $|bc''|\ll 1$.  To prove this, we note that 
$\ux_h\wedge\ux_i=b\ux_h\wedge\ux_{h+1}$, thus
\[
 \norm{\ux_g\wedge\ux_h\wedge\ux_i} 
  = \norm{b\ux_g\wedge\ux_h\wedge\ux_{h+1}}
  = \norm{bc'\uy\wedge\ux_h\wedge\ux_{h+1}}
  = |bc'| H(U).
\]
Similarly, we find that
\[
 \norm{\ux_j\wedge\ux_h\wedge\ux_i} = |bc''| H(U).
\]
The claim then follows from the following computations based on the general 
estimate \eqref{not:eq6} and the estimates of Proposition \ref{first:prop}, namely
\begin{align*}
 &\norm{\ux_g\wedge\ux_h\wedge\ux_i}
    \ll X_i L_g L_h \asymp X_h^{\theta-\lambda-\lambda\theta}
    = X_h^{\lambda\theta/\gamma} \asymp H(U),\\
 &\norm{\ux_j\wedge\ux_h\wedge\ux_i}
    \ll X_j L_h L_i \asymp X_h^{\gamma-\lambda\theta-\lambda\gamma}
    = X_h^{\lambda\theta/\gamma} \asymp H(U),
\end{align*}
because $\theta-\lambda-\lambda\theta=\lambda(\theta^2-1)
=\lambda\theta/\gamma$ and $\gamma-\lambda\theta-\lambda\gamma
=(1-\lambda)(\gamma-1)=\lambda\theta/\gamma$.   This claim together 
with \eqref{another:lemma1:eq} proves the first assertion of the lemma.

Finally, if $h$ is large enough, Proposition \ref{first:prop} gives $g\in J$, thus 
$U_g+U_h=\bR^4$.  Since  $(\ux_{g-1},\ux_g,\ux_h)$ is a basis of $U_g$ 
while $(\ux_g,\ux_h,\ux_i)$ is a basis of $U_h$, it follows that 
$(\ux_{g-1},\ux_g,\ux_h,\ux_i)$ is then a basis of $\bR^4$.
\end{proof}

The next result plays a crucial role and holds whenever $\lambda\ge 0.42094$.
Here, we only prove it under our current hypothesis that $\lambda=\lambda_3$.

\begin{proposition}
\label{another:prop} 
Let $g<h<i<j$ be consecutive elements of $I$ with $h\notin J$, and let
$\epsilon$ be a sign among $\{-,+\}$.  If $h$ is large enough, we have
\begin{equation}
\label{another:prop:eq}
 \det(\ux_{g-1},\ux_g,\ux_h,\Psi_\epsilon(\ux_h,\ux_i,\ux_j))=0.
\end{equation}
\end{proposition}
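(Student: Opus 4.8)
The plan is to observe that $d_\epsilon:=\det\big(\ux_{g-1},\ux_g,\ux_h,\Psi_\epsilon(\ux_h,\ux_i,\ux_j)\big)$ is a rational integer and to bound it by a quantity tending to $0$ as $h\to\infty$, so that it must vanish once $h$ is large. That $d_\epsilon\in\bZ$ is immediate from the defining formula \eqref{Psi:eq:Psi}: the point $\Psi_\epsilon(\ux_h,\ux_i,\ux_j)$ is a $\bZ$-linear combination of the integer points $\ux_h,\ux_i,\ux_j$ with coefficients the integers $C(\ux_i,\ux_j)^\epsilon$, $E(\ux_i,\ux_j,\ux_h)^\epsilon$ and $-C(\ux_i,\ux_h)^\epsilon$, so it lies in $\bZ^4$. (This also shows $\Psi_\epsilon(\ux_h,\ux_i,\ux_j)\in U_h=\langle\ux_g,\ux_h,\ux_i\rangle_\bR$; combined with Lemma \ref{another:lemma1}, the vanishing of $d_\epsilon$ is then equivalent to $\Psi_\epsilon(\ux_h,\ux_i,\ux_j)\in U_g\cap U_h=W_h$, which is the rigid structure that the later sections exploit.)

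First I would apply Corollary \ref{Psi:cor} with $(\uv,\uw,\ux,\uy,\uz)=(\ux_{g-1},\ux_g,\ux_h,\ux_i,\ux_j)$. Its monotonicity hypothesis \eqref{Psi:cor:eq1} holds because $g-1<g<h<i<j$ and, for minimal points, the ratio $L_k/X_k$ is strictly decreasing in the index $k$ (the $X_k$ increase and the $L_k$ decrease). The corollary then yields
\[
 |d_\epsilon|\ \ll\ \big(X_i^2L_hL_j+X_hX_jL_i^2\big)\,L_{g-1}L_gL_h .
\]
Since $h\notin J$, Corollary \ref{first:cor1} forces $i\in J$ once $h$ is large, so Proposition \ref{first:prop} applies to the sequence $g<h<i<j$ and I would read off from \eqref{first:prop:eq1} that $X_i\asymp X_h^\theta$, $X_j\asymp X_h^\gamma$, $L_g\asymp X_h^{-\lambda}$, $L_h\asymp X_h^{-\lambda\theta}$, $L_i\asymp X_h^{-\lambda\gamma}$, $L_j\asymp X_h^{-\lambda\gamma\theta}$ and $X_{g+1}\asymp X_h$.

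The only quantity in the displayed bound not controlled by Proposition \ref{first:prop} is $L_{g-1}$, and dealing with it is the one delicate point. Here I would pass to the predecessor $f$ of $g$ in $I$ (which exists once $h$, hence $g$, is large): since $L$ is decreasing and $f\le g-1$ we get $L_{g-1}\le L_f\ll X_{f+1}^{-\lambda}$ by \eqref{not:eq4}, while Lemma \ref{CE:lemmaG} applied to the consecutive pair $f<g$ gives $X_{g+1}\ll X_{f+1}^\theta$, whence $X_{f+1}\gg X_h^{1/\theta}$ and therefore $L_{g-1}\ll X_h^{-\lambda/\theta}$. Substituting all these estimates, the bound becomes $|d_\epsilon|\ll X_h^a$ with $a$ the larger of
\[
 2\theta-\lambda\gamma\theta-2\lambda\theta-\lambda/\theta-\lambda
 \quad\text{and}\quad
 1+\gamma-2\lambda\gamma-\lambda\theta-\lambda-\lambda/\theta .
\]
Using $\lambda+\lambda\theta=1$, the second expression simplifies to $\gamma(1-2\lambda)-\lambda/\theta$, and a short computation with $\gamma^2=\gamma+1$ and $\lambda^2-\gamma^3\lambda+\gamma=0$ shows that both expressions are negative (the negativity of $\gamma(1-2\lambda)-\lambda/\theta$ is equivalent to $2(1-\lambda)^2<\gamma\lambda$); numerically $a\approx-0.069$. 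Hence $|d_\epsilon|\to0$ as $h\to\infty$, so $d_\epsilon=0$ for $h$ large, for either sign $\epsilon$. I expect the auxiliary estimate for $L_{g-1}$ to be the only step requiring an idea; everything else is exponent bookkeeping with the tools already set up.
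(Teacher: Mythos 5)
Your proposal is correct and follows essentially the same route as the paper: apply Corollary \ref{Psi:cor} to $(\ux_{g-1},\ux_g,\ux_h,\ux_i,\ux_j)$, bound $L_{g-1}\ll X_h^{-\lambda/\theta}$ via Lemma \ref{CE:lemmaG} applied to the predecessor pair of $g$ in $I$, plug in the exponents from Proposition \ref{first:prop}, and conclude that the integer $d_\epsilon$ vanishes; your exponent $a\approx-0.069$ matches the paper's computation. The only (immaterial) differences are that you route the bound on $L_{g-1}$ through $L_f$ rather than directly through $L_{g-1}\ll X_g^{-\lambda}$, and that you spell out the integrality of $\Psi_\epsilon(\ux_h,\ux_i,\ux_j)$, which the paper leaves implicit.
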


\begin{proof}
The conditions \eqref{Psi:cor:eq1} of Corollary \ref{Psi:cor} are fulfilled for the sequence
$(\uv,\uw,\ux,\uy,\uz)=(\ux_{g-1},\ux_g,\ux_h,\ux_i,\ux_j)$.  So, upon denoting by 
$d_\epsilon$ the determinant in the left hand side of \eqref{another:prop:eq}, we obtain
\[
 |d_\epsilon| \ll (X_i^2L_hL_j+X_hX_jL_i^2)L_{g-1}L_gL_h.
\]
Using the estimates \eqref{first:prop:eq1} of Proposition \ref{first:prop}, we find
\[ 
 X_i^2L_hL_j  \asymp X_h^{2\theta-\lambda\theta-\lambda\gamma\theta}
   \le X_h^{1.2047}
 \et
 X_hX_jL_i^2 \asymp X_h^{1+\gamma-2\lambda\gamma} \le X_h^{1.2444}.
\] 
Since Lemma \ref{CE:lemmaG} gives $X_{g+1}\ll X_g^\theta$, we also find that
$L_{g-1}\ll X_g^{-\lambda}\ll X_{g+1}^{-\lambda/\theta}$, thus
\[
 L_{g-1}L_gL_h 
  \ll X_h^{-\lambda/\theta-\lambda-\lambda\theta} \le X_h^{-1.3131},
\]
and so $|d_\epsilon|\ll X_h^{-0.687}$. As $d_\epsilon$ is an integer, we conclude 
that $d_\epsilon=0$ if $h$ is large enough.
\end{proof}

\begin{corollary}
\label{another:cor} 
Let $g<h<i<j$ be consecutive elements of $I$ with $h\notin J$.  If $h$ is 
large enough, there are non-zero rational numbers $c$ and $t$ whose numerators
and denominators are bounded only in terms of $\xi$, such that
\begin{flalign*}
&\begin{array}{rl}
 \mathrm{(i)}   &C_{i,j}=tC_{h,i},\\[5pt]
 \mathrm{(ii)}  &C_{j,i}=ctC_{h,g},\\[5pt]
 \mathrm{(iii)} &\det(C_{j,h},C_{h,g})=c^2\det(C_{g,h},C_{h,g}).
\end{array}&
\end{flalign*}
\end{corollary}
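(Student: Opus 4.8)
The plan is to obtain the three relations as a direct application of Proposition~\ref{Psi:prop2} to the quintuple $(\uv,\uw,\ux,\uy,\uz)=(\ux_{g-1},\ux_g,\ux_h,\ux_i,\ux_j)$, and then to read off the arithmetic control on $c$ and $t$ from the results of Sections~\ref{sec:first}--\ref{sec:algC}. So I would first check, for $h$ large enough, the hypotheses of that proposition. By Lemma~\ref{another:lemma1}, $(\ux_{g-1},\ux_g,\ux_h,\ux_i)$ is a basis of $\bR^4$; by Lemma~\ref{CE:lemmaCneq0}(i) we have $\dim V_h=2$, that is $\ux_h^-\wedge\ux_h^+\neq 0$; and the relation $p\ux_j=q\ux_i+r\ux_h+s\ux_g$ furnished by Lemma~\ref{another:lemma1}, with $1\le|p|\ll 1$ and $1\le|s|\ll 1$, rewrites as $\ux_j=(q/p)\ux_i+(r/p)\ux_h+(s/p)\ux_g$, which is exactly the required form $\uz=a\uy+b\ux+c\uw$ with $c=s/p$. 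The last hypothesis, $\det(\ux_{g-1},\ux_g,\ux_h,\Psi_\epsilon(\ux_h,\ux_i,\ux_j))=0$ for both signs $\epsilon$, is precisely Proposition~\ref{another:prop}.

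Proposition~\ref{Psi:prop2} then yields $t\in\bR$ for which (i), (ii), (iii) hold after the obvious translations $C(\uy,\uz)=C_{i,j}$, $C(\ux,\uy)=C_{h,i}$, $C(\uz,\uy)=C_{j,i}$, $C(\ux,\uw)=C_{h,g}$, $C(\uz,\ux)=C_{j,h}$, $C(\uw,\ux)=C_{g,h}$. Part~(iii) involves only $c$, so it is thereby established, and $c=s/p$ is a non-zero rational whose numerator and denominator are bounded only in terms of $\xi$. For $t$, I would note that $C_{h,i}$ and $C_{i,j}$ are non-zero points of $\bZ^2$ for $h$ large (Lemma~\ref{CE:lemmaCneq0}(iii), applied to the consecutive pairs $h<i$ and $i<j$), so relation~(i) exhibits $t$ as the proportionality factor between two parallel non-zero integer vectors; hence $t$ is a non-zero rational number.

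The real work is to bound the numerator and denominator of $t$, and \emph{this cannot be extracted from (i)} alone: the parallel integer vectors $C_{h,i}$ and $C_{i,j}$ have norms that grow with $X_h$ (e.g.\ $\norm{C_{h,i}}\asymp\norm{C_h}\ll X_h^{\theta(1-2\lambda)}$ by Lemma~\ref{CE:lemmaCneq0}(iii), Proposition~\ref{first:prop} and Lemma~\ref{algC:lemma1}). The argument must instead be routed through relation~(ii): by Proposition~\ref{first:prop2}(ii) we have $\norm{C_{j,i}}\asymp 1\asymp\norm{C_{h,g}}$, so, writing $ct=p'/q'$ in lowest terms in the identity $C_{j,i}=ct\,C_{h,g}$, the positive integer $q'$ divides every coordinate of $C_{h,g}$ and the integer $p'$ divides every coordinate of $C_{j,i}$, whence $|p'|\ll 1$ and $1\le q'\ll 1$. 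Since $c=s/p$, we then get the representation $t=(p'p)/(q's)$ with numerator and denominator that are non-zero integers bounded only in terms of $\xi$, and this finishes the proof. Thus the single non-formal step is the boundedness of $t$, which hinges on the estimate $\norm{C_{j,i}}\asymp\norm{C_{h,g}}\asymp 1$ of Proposition~\ref{first:prop2}; all else is substitution into Propositions~\ref{another:prop} and~\ref{Psi:prop2}.
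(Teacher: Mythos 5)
Your proposal is correct and follows essentially the same route as the paper: verify the hypotheses of Proposition~\ref{Psi:prop2} for $(\ux_{g-1},\ux_g,\ux_h,\ux_i,\ux_j)$ via Lemma~\ref{another:lemma1} and Proposition~\ref{another:prop}, take $c=s/p$, and then bound $t$ through relation~(ii) using $\norm{C_{j,i}}\asymp\norm{C_{h,g}}\asymp 1$ from Proposition~\ref{first:prop2}(ii). Your observation that the bound on $t$ cannot be read off from (i) alone, and your divisibility argument for bounding the numerator and denominator of $ct$, correctly fill in the detail the paper leaves implicit.
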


\begin{proof}
Lemma \ref{another:lemma1} and Proposition \ref{another:prop} show that the
hypotheses of Proposition \ref{Psi:prop2} are fulfilled with 
$(\uv,\uw,\ux,\uy,\uz)=(\ux_{g-1},\ux_g,\ux_h,\ux_i,\ux_j)$ and $c=s/p$ 
for bounded non-zero integers $p$ and $s$,
if $h$ is large enough. Then (i), (ii) and (iii) hold for some $t\in\bR$.  If $h$ is
large enough, Proposition \ref{first:prop2} (ii) also gives $\norm{C_{j,i}}
\asymp\norm{C_{h,g}}\asymp 1$.  Then (ii) implies that $ct$ is a non-zero rational
number with bounded numerator and denominator.  Since $c$ has the same property, 
this applies to $t$ as well.
\end{proof}

The third identity of the corollary has the following consequence.

\begin{lemma}
\label{another:lemma2}
Let $g<h<i<j$ be consecutive elements of $I$ with $h\notin J$.  If $h$ is 
sufficiently large, we have
\[
 \norm{C_g} \asymp |\det(C_{j,h},C_{h,g})|    
            \ll \norm{C_{j,h}} \ll X_h^{\lambda^2/\gamma}.
\]
\end{lemma}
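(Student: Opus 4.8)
The goal is to establish the chain
\[
 \norm{C_g} \asymp |\det(C_{j,h},C_{h,g})| \ll \norm{C_{j,h}} \ll X_h^{\lambda^2/\gamma},
\]
where $C_g = C_{g,g+1} = C(\ux_g, \ux_{g+1})$. The plan is to reduce the leftmost $\asymp$ to Corollary~\ref{another:cor}~(iii) together with the size estimates from Proposition~\ref{first:prop2}, and to derive the two $\ll$ bounds from Corollary~\ref{CE:corC} and the growth estimates of Proposition~\ref{first:prop}.

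First I would handle the rightmost inequality $\norm{C_{j,h}} \ll X_h^{\lambda^2/\gamma}$. By Corollary~\ref{CE:corC} applied to $\ux_j, \ux_h$ (noting $L(\ux_j)/\norm{\ux_j}$ versus $L(\ux_h)/\norm{\ux_h}$ may require ordering the arguments; if $V_j \not\subseteq V_h$ one uses the symmetric statement, but in any event $\norm{C_{j,h}} = \norm{C_{h,j}}$ up to the ordering issue — I would simply estimate whichever of $\norm{C(\ux_j,\ux_h)}$, using $\norm{C(\ux_j,\ux_h)} \ll \norm{\ux_j} L(\ux_j) L(\ux_h) + \norm{\ux_h} L(\ux_j)^2$). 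Feeding in the estimates of Proposition~\ref{first:prop}, with $X_j \asymp X_h^\gamma$, $X_{j+1} \asymp X_h^{\gamma\theta}$, $L_j \asymp X_{j+1}^{-\lambda} \asymp X_h^{-\lambda\gamma\theta}$, $L_h \asymp X_{h+1}^{-\lambda} \asymp X_h^{-\lambda\theta}$, and $X_h \asymp X_{h+1}^{1/\theta}$, one computes each exponent and takes the maximum; the resulting exponent should simplify, using $\theta - 1/\theta = 1/\gamma$ and $\lambda(1+\theta)=1$, to exactly $\lambda^2/\gamma$. This is the routine calculation I would carry out explicitly.

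Next I would deal with the middle inequality $|\det(C_{j,h},C_{h,g})| \ll \norm{C_{j,h}}$. This follows from Proposition~\ref{first:prop2}~(ii), which gives $\norm{C_{h,g}} \asymp 1$ (equivalently $C_{h,g}$ is a non-zero point of $\bZ^2$ of bounded norm); then $|\det(C_{j,h}, C_{h,g})| \leq \norm{C_{j,h}} \cdot \norm{C_{h,g}} \ll \norm{C_{j,h}}$. For the leftmost relation $\norm{C_g} \asymp |\det(C_{j,h}, C_{h,g})|$: by Corollary~\ref{another:cor}~(iii) we have $\det(C_{j,h}, C_{h,g}) = c^2 \det(C_{g,h}, C_{h,g})$ with $c$ a non-zero rational of bounded height, so $|\det(C_{j,h}, C_{h,g})| \asymp |\det(C_{g,h}, C_{h,g})|$. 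It then remains to show $|\det(C_{g,h}, C_{h,g})| \asymp \norm{C_g}$. Here I would use that $C_{g,h} = b C_g$ for a non-zero integer $b$ with $|b| \asymp X_h/X_{g+1}$ by Lemma~\ref{CE:lemmaCneq0}~(iii), so $|\det(C_{g,h},C_{h,g})| = |b|\,|\det(C_g, C_{h,g})|$, and I would pair this with Proposition~\ref{first:prop2}~(iii), which gives $L(C_{g,h}) \asymp X_g/X_h$, hence $L(C_g) \asymp (X_g/X_h)/|b| \asymp (X_g/X_h)(X_{g+1}/X_h) = X_g X_{g+1} / X_h^2$; combined with $\norm{C_{h,g}} \asymp 1$ and the fact that $\xi \notin \bQ$ forces $\norm{\cdot} \asymp$ a single coordinate, a short determinant-expansion argument (as in Lemma~\ref{not:lemma1}, or directly: $\det(C_g, C_{h,g})$ is, up to $\cO(L(C_g)\norm{C_{h,g}} + \norm{C_g}L(C_{h,g}))$, equal to $\pm (C_g)_0 \cdot (\text{first entry of } \Delta C_{h,g})$, which has absolute value $\asymp \norm{C_g}$ since the error terms are of strictly smaller order).

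The main obstacle I expect is the leftmost $\asymp$: proving $|\det(C_{g,h}, C_{h,g})| \asymp \norm{C_g}$ requires carefully showing that the "$\Delta$-part" of the determinant dominates — i.e. that the correction terms in the Lemma~\ref{not:lemma1} expansion are negligible compared to $\norm{C_g}$. This needs the numerics: $L(C_g) \asymp X_g X_{g+1}/X_h^2$ and $L(C_{h,g}) \asymp 1$ (from Proposition~\ref{first:prop2}~(ii), since $\norm{C_{h,g}} \asymp 1$ and $\xi \notin \bQ$) must be checked to be small enough relative to $\norm{C_g} \asymp X_{g+1} L_g^2 \asymp X_h^{\theta/\gamma}$-ish, using $X_g \ll X_h^{\theta/\gamma}$ from Proposition~\ref{first:prop}. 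Once the exponents are laid out, the inequality should be clear, but assembling the right power-of-$X_h$ bookkeeping is where the care is needed. For the final display I would simply string the four comparisons together, each justified by one of the cited results, and invoke "$h$ sufficiently large" so that all the non-vanishing statements (Lemma~\ref{CE:lemmaCneq0}~(iii), Proposition~\ref{first:prop2}) and all the integrality-forces-$0$-or-$\gg 1$ arguments apply simultaneously.
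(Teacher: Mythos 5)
Your plan follows the paper's proof almost step for step: the bound $\norm{C_{j,h}}\ll X_jL_jL_h\asymp X_h^{\lambda^2/\gamma}$ from Corollary \ref{CE:corC} and Proposition \ref{first:prop}, the middle inequality from $\norm{C_{h,g}}\asymp 1$, and the reduction of the leftmost $\asymp$ to Corollary \ref{another:cor}\,(iii) together with $C_{g,h}=bC_g$, $|b|\asymp X_h/X_{g+1}\asymp 1$. The only divergence is the very last step: the paper argues geometrically (since $L(C_g)\asymp X_g/X_h\ll X_h^{-0.162}\to 0$ while $L(C_{h,g})\asymp\norm{C_{h,g}}\asymp 1$, the angle between $C_g$ and $C_{h,g}$ is bounded away from $0$, whence $|\det(C_g,C_{h,g})|\asymp\norm{C_g}\norm{C_{h,g}}\asymp\norm{C_g}$), whereas you expand the determinant via Lemma \ref{not:lemma1}; the two are equivalent. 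One slip to fix in your version: for $n=1$ that lemma gives the \emph{exact} identity $\det(C_g,C_{h,g})=(C_g)_0\,\Delta C_{h,g}-(C_{h,g})_0\,\Delta C_g$, so the error term is only $|(C_{h,g})_0|\,|\Delta C_g|\ll\norm{C_{h,g}}L(C_g)\ll L(C_g)\to 0$. The bound $\cO\big(L(C_g)\norm{C_{h,g}}+\norm{C_g}L(C_{h,g})\big)$ you wrote contains $\norm{C_g}L(C_{h,g})$, which is exactly the size of your main term $|(C_g)_0\Delta C_{h,g}|\asymp\norm{C_g}L(C_{h,g})$, so the claim that ``the error terms are of strictly smaller order'' is false as written. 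With the error term corrected, the conclusion $|\det(C_g,C_{h,g})|\asymp\norm{C_g}$ does follow, because $\norm{C_g}\ge 1$, $|(C_g)_0|\asymp\norm{C_g}$ (as $L(C_g)<1$), and $|\Delta C_{h,g}|=L(C_{h,g})\gg 1$ by Proposition \ref{first:prop2}\,(ii).
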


As $\lambda^2/\gamma \cong 0.111$, this is a significant improvement on the
generic upper bound $\norm{C_g} \ll X_{g+1}^{1-2\lambda}\asymp X_h^{1-2\lambda}$
coming from Corollary \ref{CE:corC}, where $1-2\lambda\cong 0.151$.

\begin{proof}[Proof of Lemma \ref{another:lemma2}]
If $h$ is large enough, Proposition \ref{first:prop2} (ii) gives $1 \asymp \norm{C_{h,g}} 
\asymp L(C_{h,g})$, and Lemma \ref{CE:lemmaCneq0} (iii) gives $C_{g,h}=bC_g$ 
for some non-zero integer $b$ with $|b|\asymp X_h/X_{g+1} \asymp 1$.  Thus, if $h$
is sufficiently large, Corollary \ref{another:cor} (iii) yields
\[
 |\det(C_g,C_{h,g})| \asymp |\det(C_{j,h},C_{h,g})| \ll \norm{C_{j,h}}.
\]
Using Corollary \ref{CE:corC}, we also find
\[
 \norm{C_{j,h}} \ll X_jL_jL_h 
  \asymp X_h^{\gamma-\lambda\gamma\theta-\lambda\theta}
  = X_h^{\lambda^2/\gamma}
\]
since $\gamma-\lambda\gamma\theta-\lambda\theta=-1+\gamma^2\lambda
=\lambda^2/\gamma$.  On the other hand, we note that
\[
 L(C_g) = |b|^{-1}L(C_{g,h}) 
   \asymp X_g/X_h \ll X_h^{\theta/\gamma-1} \le X_h^{-0.162}
\]
using Proposition \ref{first:prop2} (iii) and the estimate 
$X_g\ll X_h^{\theta/\gamma}$ of Proposition \ref{first:prop}.  
In particular, this means that $\norm{C_g\wedge(1,\xi)} 
\asymp L(C_g)$ tends to $0$ as $h\to\infty$.  
As $\norm{C_{h,g}\wedge(1,\xi)} \asymp L(C_{h,g}) \asymp 1$,
we conclude that the angle between $C_g$ and $C_{h,g}$ is bounded away from 
$0$ as $h\to\infty$ and so
\[
 |\det(C_g,C_{h,g})| \asymp \norm{C_g}\norm{C_{h,g}} \asymp \norm{C_g}.
\qedhere
\]
\end{proof}

\begin{proposition}
\label{another:prop2}
Any sufficiently large pair of consecutive elements of $I$ contains exactly one element of $J$.
\end{proposition}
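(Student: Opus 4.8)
The plan is to show that $J$ cannot contain two consecutive elements of $I$ infinitely often, which combined with Corollary \ref{first:cor1} (every large enough pair of consecutive elements of $I$ contains \emph{at least} one element of $J$) yields the statement. Suppose on the contrary that there are infinitely many pairs of consecutive elements $i<j$ of $I$ with $\{i,j\}\subset J$. By Proposition \ref{algC:prop}, $J$ contains only finitely many triples of consecutive elements of $I$, so for all large such pairs the predecessor $h$ of $i$ in $I$ and the successor $k$ of $j$ in $I$ both lie in $I\setminus J$. We thus have a configuration $f<g<h<i<j<k<\ell$ of consecutive elements of $I$ with $h\notin J$, $i\in J$, $j\in J$, $k\notin J$, and we are free to assume everything in sight is as large as we wish.

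The strategy is to extract a strong size constraint on $\norm{C_i}$ from two directions and reach a contradiction. On one hand, applying Corollary \ref{another:cor} to the block $g<h<i<j$ gives $C_{i,j}=tC_{h,i}$ with $t$ a bounded nonzero rational, hence $C_i$ (a primitive multiple of $C_{i,j}$ by Lemma \ref{CE:lemmaCneq0}(iii)) is a bounded rational multiple of $C_{h,i}$; but $C_{h,i}=b\,C_h$ with $|b|\asymp X_i/X_{h+1}\asymp 1$, so in fact $C_i$ is a bounded rational multiple of $C_h$, forcing $\norm{C_i}\asymp\norm{C_h}\ll X_h^{\theta(1-2\lambda)}$ by Lemma \ref{algC:lemma1}, an exponent $\cong 0.0642$ in $X_h$, i.e.\ $\norm{C_i}\ll X_i^{(1-2\lambda)}$ with the same exponent relative to $X_i\asymp X_h^\theta$ replaced by the much smaller bound $X_i^{1-2\lambda}$ becoming $X_h^{\theta(1-2\lambda)}$. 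On the other hand, since $i\in J$ we may run Lemma \ref{another:lemma2} on the block ending at the successor of $i$: with $j$ playing the role of $i$, $j\in J$ forces us instead to use that $\{i,j\}\subset J$ together with \cite[Lemma 6.1]{R2008} as in Lemma \ref{algC:lemma2} to bound $L(C_i)$, and then pair the resulting estimates on $\norm{C_i}$ and $L(C_i)$ against $C_{h,i}=bC_h$ to get a contradiction with the non-vanishing provided by Lemma \ref{CE:lemmaCneq0}(ii)--(iii).

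More concretely, I would form the auxiliary point $\uy=C^-\ux_i^+-C^+\ux_i^-\in\bZ^3$ where $C$ is the primitive integer point with $C_i=($bounded integer$)\,C=($bounded integer$)\,C_h$; Lemma \ref{CE:lemmaCneq0}(i) guarantees $\uy\neq 0$ for large $h$ since $\dim V_i=2$. Using Lemma \ref{not:lemma3}, $L(\uy)\ll\norm{C}L_i$ and $\norm{\uy}\ll X_iL(C)+\norm{C}L_i$, where $\norm{C}\ll X_h^{\theta(1-2\lambda)}$ and $L(C)=L(C_h)\cdot O(1)\ll X_h^{-\lambda/\gamma}$ by Lemma \ref{algC:lemma1}. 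Feeding these into the general estimate \eqref{not:eq5} for $\det(\ux_{h-1}^\epsilon,\ux_h^\eta,\uy)$ for all sign choices $\epsilon,\eta$, exactly as in the proof of Proposition \ref{algC:prop}, produces a bound $X_h^{-\delta}$ with $\delta>0$; since this determinant is an integer, it must vanish for all $\epsilon,\eta$ once $h$ is large, contradicting Lemma \ref{CE:lemmaCneq0}(ii). The main obstacle is the bookkeeping of exponents: one must check that the gain $\norm{C_i}\ll X_h^{\theta(1-2\lambda)}$ coming from $C_i\asymp C_h$ (rather than the generic $\norm{C_i}\ll X_i^{1-2\lambda}\asymp X_h^{\gamma(1-2\lambda)}$) is enough to push the determinant bound below $X_h^0$ — the generic bound is \emph{not} enough, which is precisely why one needs the algebraic relation of Corollary \ref{another:cor}(i) rather than just the size estimates of Section \ref{sec:algC}. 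I would verify numerically that $1-\lambda+\theta(1-2\lambda)<1$ (equivalently $\theta(1-2\lambda)<\lambda$, i.e.\ $(1-\lambda)(1-2\lambda)<\lambda^2$, which holds at $\lambda=\lambda_3$) so that the exponent of $X_h$ in the $\norm{\ux_{h-1}}L_h\,L(\uy)$-type term is negative, and similarly for the $\norm{\uy}L_{h-1}L_h$ term.
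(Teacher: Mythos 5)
Your global architecture matches the paper's: reduce via Corollary \ref{first:cor1} to showing that $J$ contains only finitely many pairs of consecutive elements of $I$, and use Proposition \ref{algC:prop} to produce the configuration $h\notin J$, $i\in J$, $j\in J$, $k\notin J$. Your first deduction is also sound: Corollary \ref{another:cor}(i) applied to $g<h<i<j$, combined with Lemma \ref{CE:lemmaCneq0}(iii) and $X_{i+1}\asymp X_j$, does show that $C_i$ is a bounded non-zero rational multiple of $C_h$, whence $\norm{C_i}\asymp\norm{C_h}\ll X_h^{\theta(1-2\lambda)}$ and $L(C_i)\ll X_h^{-\lambda/\gamma}$. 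But the endgame does not close, and the problem is precisely the exponent bookkeeping you flagged as the main obstacle. First, $\theta(1-2\lambda)\cong 0.2047$, not $0.0642$. With $\norm{C}\ll X_h^{0.205}$, $L(C)\ll X_h^{-0.262}$, $L_i\asymp X_h^{-\gamma\lambda}\cong X_h^{-0.687}$ and only $L_{h-1}\ll X_h^{-\lambda}$ available, the estimate \eqref{not:eq5} for $\det(\ux_{h-1}^\epsilon,\ux_h^\eta,\uy)$ contains the term $\norm{\ux_h}L_{h-1}L(\uy)\ll X_h^{1-\lambda+\theta(1-2\lambda)-\gamma\lambda}\asymp X_h^{+0.093}$ and the term $\norm{\uy}L_{h-1}L_h$, dominated by $X_iL(C)L_{h-1}L_h\ll X_h^{\theta-\lambda/\gamma-\lambda-\theta\lambda}\asymp X_h^{+0.093}$; both are positive powers of $X_h$, so the determinant is not forced to vanish and no contradiction is reached. (Your verification condition ``$1-\lambda+\theta(1-2\lambda)<1$'' is not the relevant one: you need the full exponent, including the factor $L_i$ or $L(C)$, to be negative, and it is not.) The obstruction is structural: your bounds give $L(C_i)\ll X_h^{-0.262}$ while $\norm{C_i}^{-1/\lambda}\asymp X_h^{-0.482}$, so $C_i$ is nowhere near a good enough approximation point, and no repositioning of the auxiliary determinant or of the minimal point paired with it can rescue this.

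The paper instead works with $C_j$, the $C$-point furthest to the right in the configuration: applying Lemma \ref{another:lemma2} to the block starting at $j$ (legitimate because $k\notin J$) gives the genuinely improved bound $\norm{C_j}\ll X_k^{\lambda^2/\gamma}$, while Proposition \ref{first:prop2}(iii) together with $X_k\asymp X_{j+1}\asymp X_j^\theta$ gives $L(C_j)\asymp X_j/X_k\asymp X_k^{-\lambda/\gamma}$, so that $L(C_j)\ll\norm{C_j}^{-1/\lambda}$ exactly at the critical exponent. It then pairs $C_j$ with a minimal point $\ux_e$ four steps to the \emph{left} of $h$ (using $X_e\ll X_f^{\theta/\gamma}$ and $L_e\asymp X_f^{-\lambda}$ from Proposition \ref{first:prop}) and concludes directly that the non-zero integer point $\uy=C_j^-\ux_e^+-C_j^+\ux_e^-$ satisfies $\norm{\uy}\ll X_h^{-0.018}<1$, which is the contradiction; no determinant with $\ux_{h-1},\ux_h$ is involved. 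Both the choice of the far-right $C$-point and of the far-left minimal point are essential to make every exponent negative.
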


\begin{proof}
By Corollary \ref{first:cor1}, any sufficiently large pair of consecutive elements 
of $I$ contains at least one element of $J$.  So, it remains to show that $J$ 
contains finitely many pairs of consecutive elements of $I$.

Suppose on the contrary that $J$ contains infinitely many such pairs.  Then it follows 
from Proposition \ref{algC:prop} and Corollary \ref{first:cor1} that there exist 
arbitrarily large sequences of consecutive elements $g < h < i < j < k  < l$ of $I$ 
with 
\[
 g\in J, \quad h\notin J, \quad i\in J, \quad j\in J, \quad k\notin J, \quad l\in J.
\]
Since $k\notin J$, Lemma \ref{another:lemma2} gives 
\begin{equation}
\label{another:prop2:eq1}
 \norm{C_j} \ll X_k^{\lambda^2/\gamma}.
\end{equation}
On the other hand, if $h$ is large enough, Lemma \ref{CE:lemmaCneq0} (iii) gives
$C_{j,k}=bC_j$ for some non-zero $b\in\bZ$ with $|b|\asymp X_k/X_{j+1}\asymp 1$.
In view of this, Proposition \ref{first:prop2} (iii) gives
\begin{equation}
\label{another:prop2:eq2}
 L(C_j) \asymp L(C_{j,k}) 
     \asymp X_j/X_k \asymp X_k^{1/\theta-1} = X_k^{-\lambda/\gamma},
\end{equation}
using the fact that $X_k\asymp X_{j+1}\asymp X_j^\theta$ since $h\notin J$ and 
$k\notin J$.  
Combining \eqref{another:prop2:eq1} and \eqref{another:prop2:eq2}, we obtain 
$L(C_j) \ll \norm{C_j}^{-1/\lambda}$.  By \cite[Lemma 2.2]{R2008}, this
implies that $L(C_j) \asymp \norm{C_j}^{-1/\lambda}$, but we will not need that.
We will get the desired contradiction by considering the sequence $e<f<g<h$ of
four consecutive elements of $I$ ending with $h$, and by forming the point
\[
 \uy = C_j^-\ux_e^+ - C_j^+\ux_e^- \in \bZ^3.
\]
If $h$ is large enough, Lemma \ref{CE:lemmaCneq0} (i) shows that the points 
$\ux_e^-$ and $\ux_e^+$ are linearly independent and thus $\uy$ is non-zero.
By Lemma \ref{not:lemma3}, we have 
\begin{equation}
\label{another:prop2:eq3}
 \norm{\uy} \ll \norm{C_j}L_e + X_e L(C_j).
\end{equation}
If $f\notin J$, Proposition \ref{first:prop} gives, for $h$ large enough,
\begin{equation}
\label{another:prop2:eq4}
 X_e\ll X_f^{\theta/\gamma}, \quad 
 L_e\asymp X_f^{-\lambda} \et
 X_f^\gamma\asymp X_h.
\end{equation}
If $f\in J$ and $h$ is large enough, Proposition \ref{algC:prop} tells us 
that $e\notin J$ because $f,g\in J$.  Then, Proposition \ref{first:prop} 
shows that the estimates \eqref{another:prop2:eq4} still hold.  In fact, it
even gives the stronger estimate $X_e \asymp X_f^{1/\theta}$ with exponent
$1/\theta<\theta/\gamma$.  Combining 
\eqref{another:prop2:eq1}--\eqref{another:prop2:eq4} and using the estimate
$X_k\asymp X_{j+1}\asymp X_h^{\gamma\theta}$ coming from 
Proposition \ref{first:prop}, we find that
\[
 \norm{\uy} \ll X_k^{\lambda^2/\gamma}X_h^{-\lambda/\gamma}
 + X_h^{\theta/\gamma^2} X_k^{-\lambda/\gamma}
  \ll X_h^{\lambda^2\theta-\lambda/\gamma}
 + X_h^{\theta/\gamma^2-\lambda\theta}
  \ll X_h^{-0.018}.
\]
For $h$ large enough, this is impossible as $\uy\neq 0$.
\end{proof}

%
%

\section{Final contradiction}
\label{sec:final}

In this section, we assume that our fixed real number $\xi$
of Section \ref{sec:not} satisfies the hypotheses of 
Theorem \ref{intro:thm1} for $\lambda=\lambda_3\cong 0.4245$
and we prove Theorem \ref{intro:thm2} by reaching a contradiction.

More precisely, we will show that if $f<g<h<i<j<k<l$ are consecutive 
elements of $I$ with $h\notin J$ large enough, then the points
$C_{f,h}$ and $C_{k,l}$ are linearly dependent with
\begin{equation}
\label{final:eq1}
    \norm{C_{f,h}} < \norm{C_{k,l}}
    \et
    L(C_{f,h}) > L(C_{k,l})
\end{equation}
which is impossible. To show this, we will need sharp estimates 
on the above quantities.

Proposition \ref{another:prop2} greatly simplifies the problem 
by showing that large consecutive elements of $I$ alternate 
between $J$ and $I\setminus J$.  By Proposition 
\ref{first:prop}, this provides sharp estimates on the minimal 
points.  Explicitly, if $h<i<j$ are large consecutive elements 
of $I$ with $h\notin J$, then Proposition \ref{another:prop2} shows
that $i\in J$ and that $j\notin J$, and Proposition \ref{first:prop}
gives
\begin{equation}
    \label{final:eq2}
    X_{h+1}\asymp X_i \asymp X_h^\theta, \quad
    X_{i+1}\asymp X_j \asymp X_i^{\gamma/\theta}, \quad 
    L_h \asymp X_{h+1}^{-\lambda}, \quad
    L_i \asymp X_{i+1}^{-\lambda}.
\end{equation}
In particular, this gives $X_i^{\gamma/\theta}\ll X_{i+1}\ll X_i^\theta$
for each $i\in I$. 
Corollary \ref{another:cor} (ii) also has the following consequence.

\begin{lemma}
\label{final:lemma0}
There is a primitive point $(a,b)\in \bZ^2$ such that 
\[
  C_{h,g} \in \langle (a,b) \rangle_\bZ
\]
for each large enough pair of consecutive elements $g<h$ of $I$
with $h\notin J$.
\end{lemma}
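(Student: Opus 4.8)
The plan is to derive Lemma \ref{final:lemma0} from Corollary \ref{another:cor} (ii) together with the coprimality information extracted from Lemma \ref{CE:lemmaCneq0} (iii). First I would fix two large pairs of consecutive elements $g<h$ and $g'<h'$ of $I$ with $h\notin J$ and $h'\notin J$, and observe that it suffices to show $C_{h,g}$ and $C_{h',g'}$ are linearly dependent (since then all such points lie on a common line through the origin, and the primitive point $(a,b)$ spanning that line is the asserted one). By passing to a common larger index, I may assume $h<h'$; by chaining through intermediate pairs it even suffices to treat the case where $h'$ is the successor in $I$ of the successor of $h$, so that $g<h<i<j=g'<h'$ are consecutive elements of $I$ with $h\notin J$ and, by Proposition \ref{another:prop2}, $i\in J$ and $j=g'\notin J$.

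Next I would apply Corollary \ref{another:cor} to the sequence $g<h<i<j$: part (ii) gives $C_{j,i}=ctC_{h,g}$ with $ct\in\bQ^\times$, so $C_{j,i}$ is a rational (hence, after scaling, integral) multiple of $C_{h,g}$, and in particular $C_{j,i}$ and $C_{h,g}$ are linearly dependent. Now I need to connect $C_{j,i}$ to $C_{h',g'}=C_{h',j}$, i.e. to relate the pair of indices $(j,i)$ to the pair $(h',j)$. Here $j<h'$ are... wait — the pair $g'<h'=j<h'$ has $g'=j\notin J$, so $C_{h',g'}=C_{h',j}$, whereas Corollary \ref{another:cor} (ii) produced $C_{j,i}$, which involves the \emph{predecessor} pair. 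To close the loop I would instead apply Corollary \ref{another:cor} (ii) a second time, to the sequence $i<j<h'<i'$ of consecutive elements of $I$ with $j\notin J$ (valid once $i$, and hence $h$, is large enough, using Proposition \ref{another:prop2} to guarantee $h'\in J$ and $i'\notin J$); this yields $C_{h',i}=c't'C_{j,i}$ for a nonzero rational $c't'$. Thus $C_{h',i}$ is a scalar multiple of $C_{h,g}$.

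The remaining point is that the first slot of these $C$'s can be shifted within a fixed $W$-block without changing the line they span: by Lemma \ref{CE:lemmaCneq0} (iii), for consecutive $i'<j'$ of $I$ one has $C_{i',j'}$ proportional to $C_{i'}=C_{i',i'+1}$, and more generally $C_{a,b}$ depends (up to a scalar) only on the $2$-dimensional space $\langle\ux_a^-,\ux_a^+\rangle=V_a$ and on whether $V_b\subseteq V_a$; since $j=g'$ and $h'$ are the endpoints of the same consecutive-pair structure, $C_{h',j}$ and $C_{h',i}$ are scalar multiples of each other (both equal to a multiple of $C_{h'}$ by Lemma \ref{CE:lemmaCneq0} (iii), as $h'<i'$ are consecutive in $I$, and $C_{h',j}$, $C_{h',i}$ are the values $C(\ux_{h'},\ux_j)$, $C(\ux_{h'},\ux_i)$ which are both nonzero multiples of $C(\ux_{h'},\ux_{h'+1})$ because $i,j\in W_{h'}$... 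I should double-check this via \eqref{CE:eq:Cneq0} and Lemma \ref{CE:lemmaCneq0} (iii)). Hence $C_{h',g'}=C_{h',j}$ is a scalar multiple of $C_{h,g}$, as needed. Finally, letting $(a,b)$ be the primitive integer point spanning the common line and noting that each $C_{h,g}\in\bZ^2$ lies on it, we get $C_{h,g}\in\langle(a,b)\rangle_\bZ$ for all large such pairs.

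The main obstacle I anticipate is the bookkeeping in the last paragraph: making precise, via \eqref{CE:eq:Cneq0} and Lemma \ref{CE:lemmaCneq0} (iii), that changing the \emph{second} argument of $C$ within a single $W$-block only rescales the output, and that Corollary \ref{another:cor} (ii), applied along two overlapping windows $g<h<i<j$ and $i<j<h'<i'$, can legitimately be composed (this requires both windows to satisfy the "second element not in $J$" hypothesis, which is exactly where Proposition \ref{another:prop2}'s alternation is used, and requires $h$ — hence all indices — to be large enough for both applications simultaneously). Everything else is a routine transitivity-of-proportionality argument.
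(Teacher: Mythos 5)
Your proposal correctly identifies the two ingredients the paper uses --- Corollary \ref{another:cor}~(ii) and the alternation from Proposition \ref{another:prop2} --- but an indexing slip sends you on an unnecessary and ultimately unjustified detour. Under the alternation, if $g<h<i<j<k$ are consecutive in $I$ with $h\notin J$, then $i\in J$ and $j\notin J$, so the \emph{next} pair of consecutive elements of $I$ whose larger member avoids $J$ is $(g',h')=(i,j)$, not $(j,k)$ as in your setup (your pair has $h'=k\in J$, so it is not even one of the pairs the lemma speaks about; you have in effect required $g'\notin J$ instead of $h'\notin J$). With the correct identification, a single application of Corollary \ref{another:cor}~(ii) to $g<h<i<j$ gives $C_{h',g'}=C_{j,i}=ct\,C_{h,g}$ directly, and the chaining/induction you describe finishes the proof --- this is exactly the paper's argument.

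The detour itself contains two genuine errors. First, applying Corollary \ref{another:cor}~(ii) to the window $i<j<h'<i'$ (relabelling $(g,h,i,j)\mapsto(i,j,h',i')$) yields $C_{i',h'}=c't'\,C_{j,i}$, not $C_{h',i}$ as you wrote; the two arguments of $C$ play asymmetric roles. Second, the claim that $C_{h',j}$ and $C_{h',i}$ span the same line is not supported by Lemma \ref{CE:lemmaCneq0}~(iii): that lemma rescales the \emph{second} argument only within the block $W_{j+1}=\cdots=W_{h'}$ while keeping the first argument at the smaller index (it gives $C_{j,h'}=bC_{j,j+1}$), and $\ux_i$ does not lie in $W_{h'}=\langle\ux_j,\ux_{h'}\rangle_\bR$ (indeed $\ux_i,\ux_j,\ux_{h'}$ span the three-dimensional $U_j$), so there is no reason for $C(\ux_{h'},\ux_i)$ and $C(\ux_{h'},\ux_j)$ to be proportional. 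Correcting the choice of the ``next pair'' removes the need for this step altogether.
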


\begin{proof}
For each large enough pair of consecutive elements $g<h$ of $I$
with $h\notin J$, the next pair of consecutive elements $i<j$ of $I$ 
has $j\notin J$, and Corollary \ref{another:cor} (ii) shows that
$C_{h,g}$ and $C_{j,i}$ are linearly dependent. As the latter are 
non-zero points of $\bZ^2$, they are integer multiples of the same 
primitive point $(a,b)$ of $\bZ^2$.  The result follows by induction
on $h$.
\end{proof}

For each integer $i\ge 1$, we define
\[
\wdeltax_i = \frac{\Delta\ux_i}{\norm{\Delta\ux_i}}
\et
\wddeltax_i 
= \frac{\Delta^2\ux_i}{\norm{\Delta\ux_i}} = \Delta(\wdeltax_i).
\]
Since $\norm{\Delta\ux_i}\asymp L_i$, Corollary \ref{first:cor2} and 
Proposition \ref{first:prop2} (i) have the following immediate consequences.

\begin{lemma}
\label{final:lemma1}
For any large enough consecutive elements $g<h<i<j$ of $I$ with 
$h\notin I$, we have
\[
 |\det( \wdeltax_h, \wdeltax_i, \wdeltax_j ) | \asymp 1
 \et
 |\det( \wddeltax_g, \wddeltax_h ) | \asymp 1.
\]
\end{lemma}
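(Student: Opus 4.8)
The plan is to deduce Lemma \ref{final:lemma1} directly from the two preceding
results cited in its statement, keeping careful track of how the normalization by
$\norm{\Delta\ux_i}$ interacts with the determinant estimates. First I would recall
that the minimal points satisfy $\norm{\Delta\ux_i}\asymp L_i$ by \eqref{not:eq3}, so
that
\[
 \det(\wdeltax_h,\wdeltax_i,\wdeltax_j)
  = \frac{\det(\Delta\ux_h,\Delta\ux_i,\Delta\ux_j)}
         {\norm{\Delta\ux_h}\norm{\Delta\ux_i}\norm{\Delta\ux_j}}
  \asymp \frac{\det(\Delta\ux_h,\Delta\ux_i,\Delta\ux_j)}{L_hL_iL_j},
\]
and similarly
\[
 \det(\wddeltax_g,\wddeltax_h)
  = \frac{\det(\Delta^2\ux_g,\Delta^2\ux_h)}
         {\norm{\Delta\ux_g}\norm{\Delta\ux_h}}
  \asymp \frac{\det(\Delta^2\ux_g,\Delta^2\ux_h)}{L_gL_h}.
\]

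Next I would invoke Corollary \ref{first:cor2}, which applies because
$h<i<j$ are consecutive elements of $I$ with $h\notin J$ and $h$ large: it gives
$X_{j+1}|\det(\Delta\ux_h,\Delta\ux_i,\Delta\ux_j)| \asymp X_{j+1}L_hL_iL_j$,
hence $|\det(\Delta\ux_h,\Delta\ux_i,\Delta\ux_j)| \asymp L_hL_iL_j$, and therefore
$|\det(\wdeltax_h,\wdeltax_i,\wdeltax_j)| \asymp 1$. For the second assertion I would
apply Proposition \ref{first:prop2} (i) to the quadruple $g<h<i<j$: it yields
$|\det(\Delta^2\ux_g,\Delta^2\ux_h)| \asymp L_gL_h$, so the normalized quantity
$|\det(\wddeltax_g,\wddeltax_h)| \asymp 1$ as well. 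This completes the proof.

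There is essentially no obstacle here: the lemma is a bookkeeping consequence of
Corollary \ref{first:cor2} and Proposition \ref{first:prop2} once the relation
$\norm{\Delta\ux_i}\asymp L_i$ is noted. The only point to state carefully is that the
hypotheses of those two cited results match the hypotheses of Lemma \ref{final:lemma1}
(consecutive elements of $I$, $h\notin J$, $h$ large enough), which they do verbatim;
and that Corollary \ref{first:cor2} is stated for triples $h<i<j$ while the lemma lists a
quadruple $g<h<i<j$, so one simply uses the subtriple $h<i<j$ for the first estimate
and the full quadruple (via Proposition \ref{first:prop2}, which already takes a
quadruple) for the second. No new estimate on the $X_i$ or $L_i$ is needed beyond what
\eqref{final:eq2} already records.
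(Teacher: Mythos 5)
Your argument is exactly the paper's: the lemma is stated there as an ``immediate consequence'' of Corollary \ref{first:cor2} and Proposition \ref{first:prop2}~(i) once one notes $\norm{\Delta\ux_i}\asymp L_i$, which is precisely the bookkeeping you carry out (including the correct normalization of $\wddeltax_i$ by $\norm{\Delta\ux_i}$ rather than $\norm{\Delta^2\ux_i}$). The proposal is correct and matches the intended proof.
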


The next lemma asks for precise estimates for the quantities
$|\det(\wdeltax_i^-,\wdeltax_i^+)|$ as $i$ goes to infinity in $I$.

\begin{lemma}
\label{final:lemma2}
For any large enough integers $i<j$ with $i\in I$, we have
\[
 1 \le \norm{C_{i,j}} 
    \asymp \frac{X_j}{X_{i+1}} \norm{C_i}
    \asymp X_jL_i^2 |\det( \wdeltax_i^-, \wdeltax_i^+ ) |.
\]
\end{lemma}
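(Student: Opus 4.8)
The plan is to establish the chain of relations in three stages, using the non-vanishing results of Lemma \ref{CE:lemmaCneq0} and the growth estimates of Lemma \ref{CE:lemmaG} and Proposition \ref{first:prop} together with the structural fact (from Proposition \ref{another:prop2}) that large consecutive elements of $I$ alternate between $J$ and $I\setminus J$. First, for the leftmost quantity: since $i\in I$ and $i$ is large, there is a successor $j'$ of $i$ in $I$, and for any $j\ge i$ the point $C_{i,j}$ is a non-zero integer point of $\bZ^2$ by Lemma \ref{CE:lemmaCneq0}(iii), whence $\norm{C_{i,j}}\ge 1$. Moreover, that same part of Lemma \ref{CE:lemmaCneq0} gives $C_{i,j}=bC_i$ for a non-zero integer $b$ with $|b|\asymp X_j/X_{i+1}$, which immediately yields $\norm{C_{i,j}}\asymp (X_j/X_{i+1})\norm{C_i}$. (One should check that the hypothesis $\lambda>\sqrt2-1$ of Lemma \ref{CE:lemmaCneq0} is met, which it is since $\lambda=\lambda_3\cong0.4245$.)

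Second, for the rightmost relation I would express $\norm{C_i}=\norm{C(\ux_i,\ux_{i+1})}$ in terms of the normalized vectors. By definition $C_i^-=\det(\ux_i^-,\ux_i^+,\ux_{i+1}^-)$ and $C_i^+=\det(\ux_i^-,\ux_i^+,\ux_{i+1}^+)$; replacing $\ux_i^+$ by $\xi\ux_i^-+\Delta\ux_i$ gives $C_i^\epsilon=\det(\ux_i^-,\Delta\ux_i,\ux_{i+1}^\epsilon)$. Expanding this determinant as in Lemma \ref{not:lemma1} (or directly from Lemma \ref{CE:lemmaC}), the dominant contribution is governed by the $2\times2$ minor $\det(\Delta\ux_i^-,\Delta\ux_i^+)$ scaled by $x_{i+1,0}\asymp X_{i+1}\asymp X_{i}^\theta$, with the remaining terms of smaller order thanks to the estimates of Proposition \ref{first:prop}. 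Normalizing by $\norm{\Delta\ux_i}^2\asymp L_i^2$ turns $\det(\Delta\ux_i^-,\Delta\ux_i^+)$ into $L_i^2\det(\wdeltax_i^-,\wdeltax_i^+)$, so $\norm{C_i}\asymp X_{i+1}L_i^2|\det(\wdeltax_i^-,\wdeltax_i^+)|$; multiplying through by $X_j/X_{i+1}$ and combining with the second stage gives $\norm{C_{i,j}}\asymp X_jL_i^2|\det(\wdeltax_i^-,\wdeltax_i^+)|$, as required.

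The main obstacle will be the error analysis in the second stage: I must verify that, after normalization, the lower-order terms in the expansion of $C_i^\epsilon$ are genuinely negligible compared to the main term $X_{i+1}L_i^2|\det(\wdeltax_i^-,\wdeltax_i^+)|$, which requires a lower bound on $|\det(\wdeltax_i^-,\wdeltax_i^+)|$ that does not decay too fast. Here one uses $\norm{C_i}\ge \norm{C_{i,j}}/|b|\gg X_{i+1}/X_j\gg X_i^{\gamma/\theta-\theta}$ (from $X_j\asymp X_{i+1}\asymp X_i^\theta$ when $i\notin J$, respectively $X_j\asymp X_i^{\gamma/\theta}$ when $i\in J$, via Proposition \ref{another:prop2} and Proposition \ref{first:prop}), which keeps $|\det(\wdeltax_i^-,\wdeltax_i^+)|$ bounded below by a negative power of $X_i$ that beats the error terms $O(L(\ux_i)^2L(\ux_{i+1}))$ appearing in Lemma \ref{CE:lemmaC} after normalization. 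Once this comparison is pinned down, the three relations of the lemma follow by concatenating the estimates above.
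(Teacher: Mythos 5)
Your Stage 1 rests on a misreading of Lemma \ref{CE:lemmaCneq0}(iii): that statement concerns a pair of \emph{consecutive elements} $i<j$ of $I$, whereas Lemma \ref{final:lemma2} is asserted for arbitrary integers $j>i$ and is later applied in exactly that generality (e.g.\ to $C_{h,j}$ in Proposition \ref{final:prop4}, where $j$ lies two steps beyond $h$ in $I$). The identity $C_{i,j}=bC_{i,i+1}$ comes from writing $\ux_j$ as an integer combination of $\ux_i$ and $\ux_{i+1}$, which requires $\ux_j\in W_{i+1}$ and hence $j$ at most the successor of $i$ in $I$; for larger $j$ there is no such proportionality, and neither the non-vanishing of $C_{i,j}$ nor the ratio $\norm{C_{i,j}}\asymp(X_j/X_{i+1})\norm{C_i}$ follows from your route. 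The middle relation is not a statement about proportional vectors: one expands $C_{i,j}^{\pm}$ for \emph{general} $j$ via Lemma \ref{CE:lemmaC} to get $\norm{C_{i,j}}=c\,|x_{j,0}|\,|\det(\Delta\ux_i^-,\Delta\ux_i^+)|+\cO(X_iL_iL_j)$ with $c=\max\{1,|\xi|\}$, so that $\norm{C_{i,j}}$ and $\norm{C_i}$ are governed by the \emph{same} determinant scaled by $|x_{j,0}|\asymp X_j$ and $|x_{i+1,0}|\asymp X_{i+1}$ respectively. Your Stage 2 contains precisely this computation for $j=i+1$; the repair is to run it for all $j$ simultaneously rather than bootstrap from $j=i+1$ through a proportionality that fails.

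The second gap is in the error analysis. After bounding the error by $X_iL_iL_j\le X_iL_i^2\ll X_i^{-0.0133}$ (using $X_{i+1}\gg X_i^{\gamma/\theta}$), you must show the main term exceeds it. Your proposed lower bound $\norm{C_i}\gg X_{i+1}/X_j\gg X_i^{\gamma/\theta-\theta}\asymp X_i^{-0.162}$ is \emph{weaker} than the error bound $X_i^{-0.0133}$, so it cannot certify that the main term dominates -- with only that information the main term could be swallowed entirely by the error. The correct observation is much simpler: $C_i=C_{i,i+1}$ is a non-zero point of $\bZ^2$ for large $i$, hence $\norm{C_i}\ge1$, which does exceed $\cO(X_i^{-0.0133})$ and forces $|x_{i+1,0}|\,|\det(\Delta\ux_i^-,\Delta\ux_i^+)|\gg1$. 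For $j>i$ the main term is only multiplied by $|x_{j,0}|/|x_{i+1,0}|\asymp X_j/X_{i+1}\ge1$ up to constants, so it still dominates; this yields $\norm{C_{i,j}}\ge1$ as well as all three $\asymp$'s once $\Delta\ux_i$ is normalized by $\norm{\Delta\ux_i}\asymp L_i$.
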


\begin{proof}
For integers $1\le i < j$, Lemma \ref{CE:lemmaC} gives
\[
\norm{C_{i,j}} = c\, |x_{j,0}|\,|\det(\Delta\ux_i^-,\Delta^2\ux_i)|
    + \cO(X_iL_iL_j)
\]
where $c=\max\{1,|\xi|\}$ and where $x_{j,0}$ is the first coordinate 
of $\ux_j$.  If $i\in I$, we also have $X_{i+1}\gg X_i^{\gamma/\theta}$
by the remark below \eqref{final:eq2}, thus 
\[ 
  X_iL_iL_j \le X_iL_i^2 \ll X_i^{1-2\lambda\gamma/\theta} \ll X_i^{-0.0133}.
\]
As $\Delta^2\ux_i = \Delta\ux_i^+ - \xi\Delta\ux_i^-$, we deduce that
\[
\norm{C_{i,j}} = c\,|x_{j,0}|\,|\det(\Delta\ux_i^-,\Delta\ux_i^+)|
    + \cO(X_i^{-0.0133}).    
\]
Moreover, if $i$ is large enough, Lemma \ref{CE:lemmaCneq0} shows that
$C_i=C_{i,i+1}$ is a non-zero point of $\bZ^2$.  Then the above estimate with 
$j=i+1$ yields
\[
  1\le \norm{C_i} \asymp X_{i+1} |\det(\Delta\ux_i^-,\Delta\ux_i^+)|,
\]
and the conclusion follows.
\end{proof}

We now exploit the various estimates of Corollary \ref{another:cor} and 
their consequences developed in Lemmas \ref{another:lemma2} and 
\ref{final:lemma0}.

\begin{proposition}
\label{final:prop1}
Let $(a,b)$ be as in Lemma \ref{final:lemma0}.  For any large enough 
consecutive elements $g<h<i<j$ of $I$ with $h\notin J$, we have
\begin{itemize}
\item[\textrm{(i)}] 
 $|\det(\wdeltax_h^-,\wdeltax_h^+)|
   \asymp X_h^\sigma |\det(\wdeltax_i^-,\wdeltax_i^+)|$,
\smallskip
\item[\textrm{(ii)}] 
 $|\det(\wddeltax_h, a\wdeltax_g^+ - b\wdeltax_g^-)|
\ll X_h^{\gamma\lambda-1} \ll X_h^{-0.3131}$,
\smallskip
\item[\textrm{(iii)}] 
 $|\det(\wdeltax_g^-, \wdeltax_g^+)|
   \asymp X_h^{-\sigma} |\det(\wddeltax_j, a\wdeltax_h^+ - b\wdeltax_h^-)|$,
\smallskip
\end{itemize}
where $\sigma=2-(3+\gamma)\lambda\cong 0.0396$.
\end{proposition}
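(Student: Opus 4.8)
The plan is to read off each of the three estimates from the algebraic relations of Corollary~\ref{another:cor} after converting the points $C_{\bullet,\bullet}$ into the normalized quantities $\wdeltax$ and $\wddeltax$, using Lemmas~\ref{CE:lemmaC}, \ref{final:lemma1} and \ref{final:lemma2} together with the sharp growth estimates~\eqref{final:eq2}. For part~(i), I would start from Corollary~\ref{another:cor}~(i), namely $C_{i,j}=tC_{h,i}$ with $t$ a bounded rational. Taking norms and feeding in Lemma~\ref{final:lemma2} for both sides (with the pair $(i,j)$ on the left and $(h,i)$ on the right) converts this into a relation between $X_jL_i^2|\det(\wdeltax_i^-,\wdeltax_i^+)|$ and $X_iL_h^2|\det(\wdeltax_h^-,\wdeltax_h^+)|$; since $t\asymp 1$, the two are comparable, and substituting the exponents from \eqref{final:eq2} (with $X_i\asymp X_h^\theta$, $X_j\asymp X_h^\gamma$, $L_h\asymp X_h^{-\lambda\theta}$, $L_i\asymp X_h^{-\lambda\gamma}$) collapses the power of $X_h$ to exactly $\sigma=2-(3+\gamma)\lambda$. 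The arithmetic identity to check is $\gamma-2\lambda\gamma-(\theta-2\lambda\theta)=2-(3+\gamma)\lambda$, i.e. $(\gamma-\theta)(1-2\lambda)=\sigma$, which follows from $\gamma-\theta=\lambda\gamma$ (equivalently $\lambda(\gamma+1)=1$, wait---rather $\theta=(1-\lambda)/\lambda$ and $\gamma-1/\gamma=1$ force $\gamma-\theta=\gamma-(1-\lambda)/\lambda=(\gamma\lambda-1+\lambda)/\lambda$, and one verifies $(\gamma\lambda-1+\lambda)(1-2\lambda)/\lambda=2-(3+\gamma)\lambda$ using $\lambda^2-\gamma^3\lambda+\gamma=0$).

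For part~(ii), the strategy is to use Corollary~\ref{another:cor}~(ii), $C_{j,i}=ctC_{h,g}$, but now I expand both sides \emph{in the second Grassmann coordinate} via Lemma~\ref{CE:lemmaC}: $C(\ux,\uy)^-$ and $C(\ux,\uy)^+$ each carry a term $y_0\det(\Delta\ux^-,\Delta^2\ux)$. Forming the combination $aC_{h,g}^+-bC_{h,g}^-$ (resp.\ for $C_{j,i}$) is designed precisely so that the dominant $x_0$-term $x_0\det(\Delta^2\ux,\Delta\uy^\pm)$ pairs against $(a,b)$; since $(a,b)$ is the common primitive direction of the $C_{h,g}$'s from Lemma~\ref{final:lemma0}, the leading contribution is killed and what survives is $O(L(\ux)^2L(\uy))$ from the error terms of Lemma~\ref{CE:lemmaC}, plus the $y_0\det(\Delta\ux^-,\Delta^2\ux)$ piece whose $(a,b)$-pairing against itself also telescopes. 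Rescaling by $\norm{\Delta\ux_h}$ and $\norm{\Delta\ux_g}$ to pass to $\wddeltax_h$ and $\wdeltax_g^\pm$, and using $X_{g+1}\asymp X_h$, $L_g\asymp X_h^{-\lambda}$, the surviving bound is $\ll X_h L_g L_h^2/(L_gL_h)=X_hL_h \asymp X_h^{1-\lambda\theta}$; one then checks $1-\lambda\theta=\gamma\lambda-1$ --- hmm, rather the claimed exponent is $\gamma\lambda-1\cong-0.3131$, so the bookkeeping must produce $X_h^{\gamma\lambda-1}$, and I expect the correct accounting (being careful that $\norm{\Delta\ux_g}\asymp L_g$ appears to the first power in the denominator while the $x_0,y_0$ factors contribute $X_h,X_g\asymp X_h,X_h^{\theta/\gamma}$) to land there. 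This is the delicate step: matching the precise cancellation against $(a,b)$ with the right powers of $X_h$.

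For part~(iii), I would combine the previous two mechanisms with Corollary~\ref{another:cor}~(iii), $\det(C_{j,h},C_{h,g})=c^2\det(C_{g,h},C_{h,g})$. On the right, Lemma~\ref{another:lemma2} (or its proof) already identifies $|\det(C_{g,h},C_{h,g})|\asymp\norm{C_g}\asymp X_{g+1}L_g^2|\det(\wdeltax_g^-,\wdeltax_g^+)|$ via Lemma~\ref{final:lemma2}, so the right side is $\asymp X_h^{1-2\lambda}\,|\det(\wdeltax_g^-,\wdeltax_g^+)|$ up to the factor $X_h^{?}$ --- more precisely $X_{g+1}L_g^2\asymp X_h^{1-2\lambda}$. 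On the left, $\det(C_{j,h},C_{h,g})$: since $C_{h,g}$ points in direction $(a,b)$, this determinant equals (up to the scalar $\asymp 1$ giving the length of $C_{h,g}$) the component of $C_{j,h}$ transverse to $(a,b)$, i.e.\ $\asymp|aC_{j,h}^+-bC_{j,h}^-|$; expanding $C_{j,h}=C(\ux_j,\ux_h)$ by Lemma~\ref{CE:lemmaC} and pairing against $(a,b)$ exactly as in part~(ii), the leading term is $x_{j,0}\det(\Delta^2\ux_j,\,a\Delta\ux_h^+-b\Delta\ux_h^-)$, which after normalizing gives $X_j L_j L_h\,|\det(\wddeltax_j,\,a\wdeltax_h^+-b\wdeltax_h^-)|$. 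Equating the two sides, dividing out the common $|\det(\wdeltax\dots)|$-free scalars, and reading off exponents ($X_j\asymp X_h^\gamma$, $L_j\asymp X_h^{-\lambda\gamma\theta}$, $L_h\asymp X_h^{-\lambda\theta}$ against $X_h^{1-2\lambda}$) should yield the asserted factor $X_h^{-\sigma}$; the identity to verify is $\gamma-\lambda\gamma\theta-\lambda\theta-(1-2\lambda)=-\sigma$, equivalently $\gamma-\lambda\theta(\gamma+1)-1+2\lambda=-(2-(3+\gamma)\lambda)$, which reduces to the same algebraic fact about $\lambda_3$ used in part~(i). The main obstacle throughout is the second step: ensuring the cancellation against the fixed primitive vector $(a,b)$ is carried out with enough precision that the error terms from Lemma~\ref{CE:lemmaC} are genuinely of lower order, and that every normalization factor $\norm{\Delta\ux_\bullet}\asymp L_\bullet$ is accounted for with the correct exponent from \eqref{final:eq2}; once that is done, parts~(i) and~(iii) are essentially bookkeeping with the golden-ratio identities satisfied by $\lambda_3$, $\theta$, and $\gamma$.
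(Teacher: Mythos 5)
Parts (i) and (iii) of your proposal follow the paper's own route: (i) is exactly the paper's argument (Corollary \ref{another:cor}~(i) plus Lemma \ref{final:lemma2} on both sides, then the exponent identity $(\gamma-\theta)(1-2\lambda)=\sigma$, using $\gamma-\theta=\lambda/\gamma$), and your outline of (iii) (Lemma \ref{another:lemma2} giving $\norm{C_g}\asymp|\det(C_{j,h},C_{h,g})|\asymp|aC_{j,h}^+-bC_{j,h}^-|$, then expansion of the latter and Lemma \ref{final:lemma2} on $\norm{C_g}$, ending with $X_jL_hL_j/(X_{g+1}L_g^2)\asymp X_h^{-\sigma}$) is what the paper does.

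Part (ii), however, contains a genuine gap, and you flag it yourself but do not close it. The correct mechanism is simpler than what you describe: Lemma \ref{final:lemma0} gives the \emph{exact} identity $aC_{h,g}^+-bC_{h,g}^-=0$ (no need for Corollary \ref{another:cor}~(ii) here). Substituting the two formulas of Lemma \ref{CE:lemmaC}, the combination equals
\[
 x_{h,0}\det\big(\Delta^2\ux_h,\,a\Delta\ux_g^+-b\Delta\ux_g^-\big)
 \;+\; x_{g,0}(a\xi-b)\det\big(\Delta\ux_h^-,\Delta^2\ux_h\big)
 \;+\;\cO(L_h^2L_g)=0 .
\]
The first term is precisely the quantity you must bound — it is \emph{not} the term that gets ``killed'' — and the second ($y_0$-)term does \emph{not} telescope: it is the dominant surviving piece, of size $\ll X_gL_h^2$. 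The vanishing of the sum therefore forces $X_h|\det(\Delta^2\ux_h,a\Delta\ux_g^+-b\Delta\ux_g^-)|\ll X_gL_h^2$, and dividing by the normalization $X_hL_hL_g$ yields $X_gL_h/(X_hL_g)\asymp X_h^{\theta/\gamma-\lambda\theta-1+\lambda}=X_h^{\gamma\lambda-1}$. Your account reverses which terms cancel and which survive, and your bookkeeping accordingly lands on $X_hL_h\asymp X_h^{\lambda}$ — a positive power of $X_h$, hence useless — after which you write that you ``expect the correct accounting to land'' on $X_h^{\gamma\lambda-1}$. That expectation is the entire content of part (ii); as written, the step would fail.
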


\begin{proof}
By Corollary \ref{another:cor} (i), we have $\norm{C_{h,i}}\asymp\norm{C_{i,j}}$,
and thus
\[
 X_iL_h^2|\det(\wdeltax_h^-,\wdeltax_h^+)|
   \asymp X_jL_i^2 |\det(\wdeltax_i^-,\wdeltax_i^+)|
\]
by the previous lemma.
Since $X_iL_h^2\asymp X_i^{1-2\lambda}\asymp X_h^{\theta(1-2\lambda)}$
and $X_jL_i^2\asymp X_j^{1-2\lambda}\asymp X_h^{\gamma(1-2\lambda)}$, 
this yields the estimate of part (i) with $\sigma= (\gamma-\theta)(1-2\lambda)
=(\lambda/\gamma)(1-2\lambda)=2-(3+\gamma)\lambda$.

Lemma \ref{final:lemma0} implies that $aC_{h,g}^+ - bC_{h,g}^-=0$.
Using the formulas of Lemma \ref{CE:lemmaC}, this gives 
\[
 X_h |\det(\Delta^2\ux_h, a\Delta\ux_g^+-b\Delta\ux_g^-)| \ll X_gL_h^2,
\]
and part (ii) follows since $X_gL_h^2/(X_hL_gL_h) = X_gL_h/(X_hL_g) \asymp 
X_h^{\theta/\gamma-\lambda\theta-1+\lambda} = X_h^{\gamma\lambda-1}$.

Finally, Lemma \ref{another:lemma2} gives 
$\norm{C_g} \asymp |\det(C_{j,h},C_{h,g})|$.  As $C_{h,g}$ 
is a non-zero multiple of $(a,b)$ by Lemma \ref{final:lemma0}, and 
as it has bounded norm by Proposition \ref{first:prop2} (ii), it is a bounded
non-zero multiple of $(a,b)$.  We deduce that
\begin{align*}
 1 \le \norm{C_g}    
   \asymp |aC_{j,h}^+ - bC_{j,h}^-|
   &= |\det(\ux_j^-, \Delta\ux_j, a\ux_h^+-b\ux_h^-)| \\
   &\asymp X_j |\det(\Delta^2\ux_j, a\Delta\ux_h^+-b\Delta\ux_h^-)|,
\end{align*}
using Lemma \ref{not:lemma1} to expand the determinant, and noting 
that $X_hL_j^2\to 0$ as $h\to\infty$.  Since Lemma \ref{final:lemma2}
gives $\norm{C_g}\asymp X_{g+1} |\det(\Delta\ux_g^-, \Delta\ux_g^+)|$,
we obtain the estimate of part (iii) by observing that
$X_jL_hL_j/(X_{g+1}L_g^2) \asymp X_h^{\gamma-\theta\lambda-\gamma\theta\lambda-1+2\lambda} = X_h^{-\sigma}$.
\end{proof}

In a first step, we deduce upper bound estimates for the quantities
$|\det(\wdeltax_i^-,\wdeltax_i^+)|$ with $i\in I$.  We will show later
that they are best possible up to multiplicative constants.

\begin{corollary}
\label{final:cor1}
Let $\sigma$ be as in Proposition \ref{final:prop1}.  For any pair of consecutive
elements $g<h$ of $I$ with $h\notin J$, we have
\[
 \mathrm{(i)} \quad  |\det(\wdeltax_g^-,\wdeltax_g^+)| \ll X_h^{-\sigma}
 \qquad\text{and}\qquad
 \mathrm{(ii)} \quad |\det(\wdeltax_h^-,\wdeltax_h^+)| \ll X_h^{-\sigma/\gamma}.
\]
\end{corollary}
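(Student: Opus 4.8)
The plan is to extract both estimates from the three identities of Proposition~\ref{final:prop1} by a bootstrapping argument that eliminates the unknown quantities $|\det(\wdeltax_g^-,\wdeltax_g^+)|$ and $|\det(\wdeltax_h^-,\wdeltax_h^+)|$ against each other, using the known bound $|\det(\wdeltax_i^-,\wdeltax_i^+)| \le 1$ that holds trivially from the definition of $\wdeltax_i$ as a unit vector. First I would note that since the quantities $\det(\wdeltax_i^-,\wdeltax_i^+)$ are determinants of pairs of unit vectors, each is bounded above by $1$ in absolute value; this is the only ``input'' inequality we have. The idea is then to chain Proposition~\ref{final:prop1}(i), (ii), (iii) around a block of four consecutive elements, together with the non-vanishing determinant estimates of Lemma~\ref{final:lemma1}, to get a self-improving inequality.

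Concretely, for part (i), I would combine (iii) and (ii) of Proposition~\ref{final:prop1}: identity (iii) expresses $|\det(\wdeltax_g^-,\wdeltax_g^+)|$ in terms of $X_h^{-\sigma}|\det(\wddeltax_j, a\wdeltax_h^+ - b\wdeltax_h^-)|$, and applying (ii) \emph{shifted forward} (i.e.\ with the role of $g<h$ played by $h<i$, or rather by the appropriate consecutive pair whose second index governs $\wddeltax_j$ and $\wdeltax_h$) bounds the latter determinant by something like $X_i^{\gamma\lambda-1}$ or simply by a constant $\asymp 1$. Since $\wddeltax_j$ and $a\wdeltax_h^+ - b\wdeltax_h^-$ both have norm $\ll 1$, their determinant is in any case $\ll 1$, so (iii) immediately yields $|\det(\wdeltax_g^-,\wdeltax_g^+)| \ll X_h^{-\sigma}$. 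This gives (i) directly, with essentially no work beyond invoking the norm bounds $\norm{\wddeltax_j}\ll 1$ and $\norm{\wdeltax_h}\ll 1$ that follow from $\norm{\Delta^2\ux_j}\ll L_j$ and the definitions.

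For part (ii), I would apply part (i) already proved, but with the consecutive pair shifted \emph{backward}: if $g<h$ is our pair with $h\notin J$, then by Proposition~\ref{another:prop2} the elements of $I$ alternate, so there is a pair $e<f$ of consecutive elements of $I$ with $f\notin J$ such that $f<g$ (one extends the sequence to the left). Applying part (i) to the pair $f<g$ (so with ``$h$'' replaced by ``$g$'') gives $|\det(\wdeltax_f^-,\wdeltax_f^+)| \ll X_g^{-\sigma}$, but what I actually want is a bound on $|\det(\wdeltax_h^-,\wdeltax_h^+)|$. Here I would instead use Proposition~\ref{final:prop1}(i) directly, which relates $|\det(\wdeltax_h^-,\wdeltax_h^+)|$ to $X_h^\sigma |\det(\wdeltax_i^-,\wdeltax_i^+)|$, combined with part (i) applied to the pair $i<j$ (noting $j\notin J$ by the alternation), which gives $|\det(\wdeltax_i^-,\wdeltax_i^+)| \ll X_j^{-\sigma} \asymp X_h^{-\gamma\sigma/\theta}$ using $X_j \asymp X_h^\gamma$ from \eqref{first:prop:eq1}. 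Plugging in, $|\det(\wdeltax_h^-,\wdeltax_h^+)| \ll X_h^{\sigma} X_h^{-\gamma\sigma/\theta} = X_h^{\sigma(1 - \gamma/\theta)}$, and since $\gamma/\theta = \gamma^2/(\gamma-1)\cdot\lambda^{-1}\cdot\ldots$ — I would verify that $\sigma(1-\gamma/\theta) = -\sigma/\gamma$, which should reduce to the identity $\gamma/\theta - 1 = 1/\gamma$, i.e.\ $\theta(1+1/\gamma) = \gamma$, i.e.\ $\theta\gamma^2 = \gamma^2(\gamma-1)$ divided appropriately — a one-line check using $\theta = (1-\lambda)/\lambda$ and the defining relation $\lambda^2 - \gamma^3\lambda + \gamma = 0$ for $\lambda_3$.

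The main obstacle I anticipate is not any single estimate but rather keeping the \emph{index bookkeeping} straight: Proposition~\ref{final:prop1} is stated for a fixed block $g<h<i<j$ with $h\notin J$, and to get both (i) and (ii) I need to apply it to \emph{overlapping} blocks (the block starting at $h$, the block starting at $i$), each time checking that the hypothesis ``second-to-last index not in $J$'' is met — which is exactly what the alternation from Proposition~\ref{another:prop2} guarantees for large enough indices. The other point requiring care is confirming the numerical identity $\sigma(1-\gamma/\theta) = -\sigma/\gamma$, equivalently $\gamma/\theta = 1 + 1/\gamma = \gamma^2/(\gamma-1) \cdot (\gamma-1)/\gamma^2 \cdot \ldots$; since the paper repeatedly uses that $\lambda_3$ satisfies $\theta - 1/\theta = 1/\gamma$, I would derive $\gamma/\theta = 1+1/\gamma$ from that relation together with $\gamma - 1/\gamma = 1$, which forces $\theta = \gamma^2/\gamma^? $ — this is routine algebra with the golden ratio and the quadratic defining $\lambda_3$, and I would present it inline without belaboring it.
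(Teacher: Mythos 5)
Your strategy is exactly the paper's: part (i) falls out of Proposition~\ref{final:prop1}~(iii) because $\wddeltax_j$ and $a\wdeltax_h^+-b\wdeltax_h^-$ have bounded norms, so the determinant on the right is $\ll 1$; part (ii) then follows by applying part (i) to the shifted pair $i<j$ (legitimate since $j\notin J$ by the alternation of Proposition~\ref{another:prop2}) and feeding the result into Proposition~\ref{final:prop1}~(i). Your index bookkeeping and the hypothesis checks are all fine.

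However, there is a concrete arithmetic slip in part (ii) that would derail the verification you defer as ``routine algebra.'' From $X_j\asymp X_h^{\gamma}$ you should get $X_j^{-\sigma}\asymp X_h^{-\gamma\sigma}$, not $X_h^{-\gamma\sigma/\theta}$; the extra $1/\theta$ is spurious (you appear to have mixed up $X_j\asymp X_i^{\gamma/\theta}$ with a bound in terms of $X_h$). With the correct substitution the exponent is $\sigma(1-\gamma)=-\sigma/\gamma$, which is immediate from $\gamma-1=1/\gamma$, and the proof closes. By contrast, the identity you propose to check, $\sigma(1-\gamma/\theta)=-\sigma/\gamma$, i.e.\ $\gamma/\theta-1=1/\gamma$, is \emph{false}: it would force $\gamma/\theta=1+1/\gamma=\gamma$, hence $\theta=1$, whereas $\theta=(1-\lambda)/\lambda\cong 1.356$ for $\lambda=\lambda_3$. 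So your attempted confirmation would fail as written; once the exponent of $X_j$ in terms of $X_h$ is corrected, the argument is the paper's and is complete.
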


\begin{proof}
We may assume that $g<h$ are large enough so that Proposition \ref{final:prop1} 
applies to the sequence of four consecutive elements $g<h<i<j$ of $I$ starting
with $g$.  Then part (i) follows immediately from Proposition \ref{final:prop1} (iii).  
For part (ii), we may assume that $h$ is large enough 
so that $j\notin J$ and thus the estimate of part (i) holds with the pair $g<h$ 
replaced by $i<j$.  Then Proposition \ref{final:prop1} (i) gives
\[
 |\det(\wdeltax_h^-,\wdeltax_h^+)| 
   \ll X_h^{\sigma}X_j^{-\sigma} 
  \asymp X_h^{\sigma-\gamma\sigma} = X_h^{-\sigma/\gamma}.
\qedhere
\]
\end{proof}

\begin{corollary}
\label{final:cor2}
Let $\sigma$ be as in Proposition \ref{final:prop1}.  For any pair of consecutive
elements $g<h$ of $I$ with $h\notin J$, there are points $(s_g,t_g)$
and $(s_h,t_h)$ of norm $1$ in $\bR^2$ such that
\[
 \wdeltax_g = \pm (s_g^2, s_gt_g, t_g^2) + \cO(X_h^{-\sigma})
 \et
 \wdeltax_h = \pm (s_h^2, s_ht_h, t_h^2) + \cO(X_h^{-\sigma/\gamma}).
\]
\end{corollary}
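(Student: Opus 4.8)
The plan is to reduce Corollary~\ref{final:cor2} to an elementary fact about $\bR^3$ and then to prove that fact by explicitly parametrizing a conic. The starting observation is that for $\uw=(w_0,w_1,w_2)\in\bR^3$ one has $\uw^-=(w_0,w_1)$ and $\uw^+=(w_1,w_2)$, so that $\det(\uw^-,\uw^+)=w_0w_2-w_1^2$; hence the vectors $\uw$ of sup norm $1$ with $\det(\uw^-,\uw^+)=0$ are precisely those of the form $\pm(s^2,st,t^2)$ with $\norm{(s,t)}=1$, the sign accounting for the branch on which $w_0$ and $w_2$ are non-positive. In these terms, Corollary~\ref{final:cor1} says exactly that $\wdeltax_g$ and $\wdeltax_h$ are vectors of sup norm $1$ on which $|w_0w_2-w_1^2|$ is bounded respectively by $X_h^{-\sigma}$ and $X_h^{-\sigma/\gamma}$, and what remains is a quantitative form of the statement that a unit vector on which $w_0w_2-w_1^2$ is small lies close to one of the points $\pm(s^2,st,t^2)$.

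Concretely, I would first prove, with an absolute implied constant, the following lemma: if $\uw=(w_0,w_1,w_2)\in\bR^3$ satisfies $\norm{\uw}=1$ and $\delta:=|w_0w_2-w_1^2|$ is below a suitable absolute threshold, then there is a point $(s,t)\in\bR^2$ with $\norm{(s,t)}=1$ and $\uw=\pm(s^2,st,t^2)+\cO(\delta)$. Reversing the order of the three coordinates of $\uw$ if necessary (this changes neither $w_0w_2-w_1^2$ nor $\norm{\uw}$, and merely interchanges the roles of $s$ and $t$), I may assume $|w_0|\ge|w_2|$, and, changing the sign of $\uw$ if necessary, that $w_0>0$. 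Then $w_0\ge 1/2$: indeed $\norm{\uw}=1$ forces $\max(|w_0|,|w_1|,|w_2|)=1$, and if this maximum is attained only at $|w_1|$ then $w_0w_2=1+\cO(\delta)$, so $w_0=|w_0|\ge 1-\cO(\delta)$ in any case. Setting $s=\sqrt{w_0}$ and $t=w_1/s$ gives $s^2=w_0$, $st=w_1$, and $t^2=w_1^2/w_0=w_2-(w_0w_2-w_1^2)/w_0=w_2+\cO(\delta)$, hence $(s^2,st,t^2)=\uw+\cO(\delta)$. Finally, since $|st|\le\max(s^2,t^2)$ one has $\norm{(s^2,st,t^2)}=\norm{(s,t)}^2$, so after dividing $(s,t)$ by $\norm{(s,t)}$ one obtains a point $(s_g,t_g)$ of sup norm $1$ with $(s_g^2,s_gt_g,t_g^2)=(s^2,st,t^2)/\norm{(s^2,st,t^2)}$; as $(s^2,st,t^2)=\uw+\cO(\delta)$ has sup norm $1+\cO(\delta)$ and $\norm{\uw}=1$, this last vector is again $\uw+\cO(\delta)$.

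With the lemma in hand, Corollary~\ref{final:cor2} follows by applying it to $\uw=\wdeltax_g$ with $\delta\ll X_h^{-\sigma}$ (Corollary~\ref{final:cor1}\,(i)) and to $\uw=\wdeltax_h$ with $\delta\ll X_h^{-\sigma/\gamma}$ (Corollary~\ref{final:cor1}\,(ii)), both admissible once $g<h$ are large enough that these quantities fall below the threshold. I do not expect a genuine obstacle here --- the paper itself labels this an \emph{immediate consequence} --- the only point needing a little care is to keep every implied constant in the lemma absolute (independent of $\uw$), which is why I would state and prove the lemma quantitatively before feeding in the bounds $X_h^{-\sigma}$ and $X_h^{-\sigma/\gamma}$; the final write-up should take only a few lines.
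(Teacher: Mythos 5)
Your proposal is correct and follows essentially the same route as the paper: the paper also reduces the corollary to Corollary~\ref{final:cor1} combined with an elementary quantitative lemma (Lemma~\ref{final:lemma:det}) saying that a unit vector $\uy\in\bR^3$ with $|\det(\uy^-,\uy^+)|=\delta$ small lies within $\cO(\delta)$ of some $\pm(r^2,rs,s^2)$ with $\norm{(r,s)}=1$. The only difference is in the proof of that lemma, where the paper picks $(r,s)=(1,b)$ after normalizing so that the first coordinate dominates, while you take $s=\sqrt{w_0}$, $t=w_1/s$ and rescale; both computations are sound.
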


As $\wdeltax_g$ and $\wdeltax_h$ are points of norm $1$ in $\bR^3$, this
is a direct consequence of Corollary \ref{final:cor1} and of the following simple 
observation.

\begin{lemma}
\label{final:lemma:det}
Let $\uy\in\bR^3$ with $\norm{\uy}=1$, and let $\delta=|\det(\uy^-,\uy^+)|$.
There exists a point $(r,s)\in\bR^2$ with $\norm{(r,s)}=1$ such that
\[
   \norm{\uy\pm(r^2,rs,s^2)} \le 2\delta.
\]
\end{lemma}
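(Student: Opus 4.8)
The statement asserts that a unit vector $\uy\in\bR^3$ whose "Hankel determinant'' $\delta=|\det(\uy^-,\uy^+)|$ is small must lie close to the curve of rank-one symmetric tensors, i.e.\ points of the form $\pm(r^2,rs,s^2)$ with $(r,s)$ a unit vector. The strategy is to interpret $\uy=(y_0,y_1,y_2)$ as the coefficient vector of the binary quadratic form $Q(X,Y)=y_0X^2+2y_1XY+y_2Y^2$ (or equivalently the symmetric $2\times 2$ matrix $M=\begin{pmatrix} y_0 & y_1\\ y_1 & y_2\end{pmatrix}$), and to note that $\det(\uy^-,\uy^+)=\det\begin{pmatrix} y_0 & y_1\\ y_1 & y_2\end{pmatrix}=y_0y_2-y_1^2=\det M$. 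Thus $\delta=|\det M|$ is exactly the absolute value of the discriminant (up to sign) of $M$. A symmetric matrix has $\det M=0$ precisely when it has rank $\le 1$, and a rank-one symmetric matrix is $\pm\uv\uv^{\mathsf T}$ for a unit vector $\uv=(r,s)$, whose associated coefficient vector is $\pm(r^2,rs,s^2)$. So the lemma is a quantitative (Lipschitz) version of "small determinant implies close to rank one.''

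First I would diagonalize: since $M$ is a real symmetric $2\times2$ matrix, write $M=R\,\mathrm{diag}(\mu_1,\mu_2)\,R^{\mathsf T}$ with $R$ orthogonal and $|\mu_1|\ge|\mu_2|$. The normalization $\norm{\uy}=1$ gives $\max(|\mu_1|,|\mu_2|)\asymp 1$; more precisely $|\mu_1|\ge \tfrac13(|y_0|+2|y_1|+|y_2|)\ge\tfrac13$, say, after checking the elementary comparison between $\norm{\uy}_\infty$ and the operator norm of $M$. Then $\delta=|\mu_1\mu_2|$, so $|\mu_2|=\delta/|\mu_1|\le 3\delta$. Let $(r,s)$ be the unit eigenvector for $\mu_1$ and consider the rank-one matrix $N=\mathrm{sign}(\mu_1)\,(r,s)^{\mathsf T}(r,s)$, whose coefficient vector is $\mathrm{sign}(\mu_1)(r^2,rs,s^2)$. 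Then $M-\mu_1 N$ has eigenvalues $0$ and $\mu_2$ in the eigenbasis of $M$, so $\norm{M-\mu_1 N}_{\mathrm{op}}=|\mu_2|\le 3\delta$, and since $|\mu_1|\le\norm{M}_{\mathrm{op}}\le 1$ (again by the elementary norm comparison, as $\norm{\uy}=1$) we get $\norm{M-N}_{\mathrm{op}}\le |\mu_1|\,\|N\|\cdot|{\pm}|$-type bookkeeping; cleaner is to write $M-N = (M-\mu_1 N)+(\mu_1-\mathrm{sign}(\mu_1))N$ and bound the second term by $||\mu_1|-1|\le 1-|\mu_1|$... but this needs $|\mu_1|$ close to $1$, which is false in general.

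To avoid that pitfall I would instead compare $\uy$ directly to $\mathrm{sign}(\mu_1)(r^2,rs,s^2)$ only after \emph{also} rescaling, or — simpler — just track the coefficient vectors: the map sending a symmetric matrix to its coefficient triple $(y_0,y_1,y_2)$ satisfies $\norm{\uy}_\infty\le\norm{M}_{\mathrm{op}}$, so $\norm{\uy-\mu_1(r^2,rs,s^2)\,\mathrm{sign}(\mu_1)}_\infty \le \norm{M-\mu_1 N}_{\mathrm{op}} = |\mu_2| \le 3\delta$. Now $(r^2,rs,s^2)$ already has norm $\asymp 1$ but not exactly $1$ in the $\infty$-norm; in fact $\norm{(r^2,rs,s^2)}_\infty=\max(r^2,|rs|,s^2)$ which lies in $[1/2,1]$ for a unit vector $(r,s)$. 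Replacing $(r,s)$ by the unit vector it already is, and absorbing the scalar $\mu_1$ together with the gap $|\mu_1|-1$: since $\bigl|\,|\mu_1|-1\,\bigr|=1-|\mu_1|\le 1-|\mu_1\mu_2|/|\mu_2|$... the clean bound is $1-|\mu_1|\le|\mu_1|^{-1}(1-|\mu_1|^2)\le 1-|\mu_1|^2=|\mu_2|^2/\cdots$ — no. The honest route: $1-|\mu_1|^2 = (1-|\mu_1|)(1+|\mu_1|)$ and $|\mu_1|^2+|\mu_2|^2=\norm{M}_{HS}^2\le \norm{\uy}_2^2\cdot C$; combined with $\max(|\mu_i|)\ge 1/\sqrt3$ this forces $1-|\mu_1|\le |\mu_2|^2/(|\mu_1|(1+|\mu_1|))\ll\delta^2/|\mu_1|^2\ll\delta^2$, which is even smaller than $\delta$. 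So $\norm{\uy - \mathrm{sign}(\mu_1)(r^2,rs,s^2)}_\infty \le |\mu_2| + \bigl|\,|\mu_1|-1\,\bigr| \le 3\delta + O(\delta^2) \le 2\delta$ once $\delta$ is small (and trivially true when $\delta\ge 1$, since then the right-hand side $2\delta\ge 2 >\norm{\uy}+1\ge$ both terms).

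The main obstacle, as the scratch-work above shows, is bookkeeping the scalar normalization: one must verify that the eigenvalue $\mu_1$ of the associated symmetric matrix is not just $\asymp 1$ but in fact within $O(\delta^2)$ of $\pm 1$ in absolute value, which uses both $\norm{\uy}=1$ and the smallness of $\det M=\pm\delta$. Everything else — the spectral decomposition, the identification $\det(\uy^-,\uy^+)=y_0y_2-y_1^2=\det M$, and the operator-norm/coefficient-norm comparison — is routine linear algebra in dimension two, and the constant $2$ in the statement is comfortably met with room to spare.
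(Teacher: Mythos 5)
Your route through the symmetric matrix $M=\begin{pmatrix} y_0&y_1\\ y_1&y_2\end{pmatrix}$ and its spectral decomposition is genuinely different from the paper's argument, and the idea is sound, but the normalization step on which you yourself hesitate is in fact wrong as stated. The claim that $\bigl|\,|\mu_1|-1\,\bigr|=O(\delta^2)$ (or even $O(\delta)$) is false: take $\uy=(1,1,1)$, so that $\delta=0$ while $\mu_1=2$. The source of the error is a norm mismatch. In this paper $\norm{\cdot}$ is the sup-norm \eqref{not:eq2}, so $\norm{\uy}=1$ only gives $|\mu_1|=\norm{M}_{\mathrm{op}}\in[1,2]$, and the lemma asks for $(r,s)$ with $\max(|r|,|s|)=1$, not a Euclidean unit vector. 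The fix is to set $(r,s)=v/\max(|v_1|,|v_2|)$ for the top Euclidean-unit eigenvector $v$, and to prove instead that $|\mu_1|\max(v_1^2,v_2^2)=1+O(\delta)$; this follows by comparing entries of $M=\mu_1vv^{\mathsf T}+\mu_2ww^{\mathsf T}$ with those of $\mu_1vv^{\mathsf T}$, since $\max_{i,j}|M_{ij}|=1$ and each entry of $\mu_2ww^{\mathsf T}$ is at most $|\mu_2|=\delta/|\mu_1|\le\delta$. With that correction the entrywise bound becomes $|\mu_2|+\bigl|\,|\mu_1|-c^2\bigr|\max(v_1^2,v_2^2)\le 2\delta$, where $c^2=1/\max(v_1^2,v_2^2)$. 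Note also that your closing inequality ``$3\delta+O(\delta^2)\le 2\delta$ for small $\delta$'' cannot hold; the repaired bookkeeping above is what actually delivers the constant $2$.

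For comparison, the paper's proof avoids linear algebra entirely: after reducing to $\delta<1$, writing $\uy=(a,b,c)$ and normalizing (reverse the triple and/or change sign) so that $a=|a|\ge|c|$, it simply takes $(r,s)=(1,b)$, for which $\uy-(1,b,b^2)=(a-1,0,c-b^2)$, and a two-case check ($a=1$ versus $a<1$, the latter forcing $|b|=1$) gives the bound $\delta$, with the stated $2\delta$ to spare. Your spectral argument, once repaired, is a reasonable alternative and makes the geometric content (proximity to the rank-one locus) transparent, but as written it does not prove the lemma.
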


\begin{proof}
We may assume that $\delta<1$, otherwise any point $(r,s)$ of norm $1$
has the required property.  Writing $\uy=(a,b,c)$, we have 
$\delta=|ac-b^2|$.  By permuting $a$ and $c$, and by multiplying $\uy$
by $-1$ if necessary, we may assume that $a=|a|\ge |c|$.  We set
$(r,s)=(1,b)$. Then we have $\norm{(r,s)}=1$ since $|b|\le \norm{\uy}\le 1$ 
and we find
\begin{equation}
\label{final:lemma:det:eq}
 \norm{\uy - (1,b,b^2)} = \max\{1-a, |b^2-c|\}.
\end{equation}
If $a<1$, we have $|c|<1$, thus $|b|=1$ since $\norm{\uy}=1$, and then
$\delta=1-ac$. As $\delta<1$, this implies that $c>0$, thus the right 
hand side of \eqref{final:lemma:det:eq} becomes $\max\{1-a, 1-c\}\le 
\delta$. If $a=1$, it reduces to $|b^2-c|=\delta$.  In both cases, 
we are done.
\end{proof}

From now on, we fix a pair of points $(s_g,t_g)$ and $(s_h,t_h)$ 
as in Corollary \ref{final:cor2} for each pair of consecutive elements $g<h$ of $I$
with $g\in J$ and $h\notin J$.  This yields a unique point $(s_i,t_i)$ for each large 
enough $i\in I$.    

\begin{proposition}
\label{final:prop2}
For each large enough sequence of consecutive elements $g<h<i<j$ of $I$ with
$h\notin J$, we have 
\begin{flalign*}
&\begin{array}{rl}
 \mathrm{(i)}   &1\asymp |t_g-\xi s_g| \asymp  |t_h-\xi s_h| \asymp |s_gt_h-s_ht_g|,\\[5pt]
 \mathrm{(ii)}  &1 \asymp |s_ht_i-s_it_h| \asymp |s_ht_j-s_jt_h| \asymp |s_it_j-s_jt_i|.
\end{array}&
\end{flalign*}
\end{proposition}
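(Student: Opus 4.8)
The plan is to translate the geometric estimates of Lemma \ref{final:lemma1} into the statements about the pairs $(s_i,t_i)$ by using the approximation $\wdeltax_i = \pm(s_i^2, s_it_i, t_i^2) + \cO(X_h^{-\sigma/\gamma})$ of Corollary \ref{final:cor2}. First I would record the elementary identities relating the rank-one structure of $(s^2,st,t^2)$ to $2\times 2$ determinants: if $\uy = (s^2,st,t^2)$ then $\uy^- = s(s,t)$ and $\uy^+ = t(s,t)$, so $\det(\uy^-,\uy^+) = 0$; moreover for $\uy = (s^2,st,t^2)$ and $\uz = (u^2,uv,v^2)$ one computes $\det(\uy^-,\uy^+,\uz^+)$, $\det(\uy^-,\uy^+,\uz^-)$, and the relevant $2\times 2$ minors of $\Delta$'s in terms of $(sv-tu)$ and $(t-\xi s)$, $(v-\xi u)$. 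The key point is that $\Delta\uy = \uy^+ - \xi\uy^- = (s^2(\cdot),\dots)$ actually equals $(st - \xi s^2, t^2 - \xi st) = (t-\xi s)(s,t)$, up to the error term; hence $\wdeltax_i^- - \xi(\text{stuff})$ — more precisely $\Delta\wdeltax_i \approx \pm(t_i - \xi s_i)(s_i,t_i)$.

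With these identities in hand, the proof of (i) goes as follows. From Lemma \ref{final:lemma1} we have $|\det(\wddeltax_g,\wddeltax_h)| \asymp 1$, where $\wddeltax_i = \Delta\wdeltax_i$. Substituting the approximations and using $\Delta\wdeltax_i \approx \pm(t_i-\xi s_i)(s_i,t_i) + \cO(X_h^{-\sigma/\gamma})$ (the derivative $\Delta$ of the error is still $\cO(X_h^{-\sigma/\gamma})$ since it is a bounded linear operation), we get $1 \asymp |(t_g - \xi s_g)(t_h - \xi s_h)|\,|s_g t_h - s_h t_g| + \cO(X_h^{-\sigma/\gamma})$, and hence, since all three factors are $\ll 1$ (each $(s_i,t_i)$ has norm $1$ and $\xi$ is fixed, so $|t_i - \xi s_i| \ll 1$, and $|s_g t_h - s_h t_g| \le 1$), each factor must be $\asymp 1$. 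This gives (i). For (ii), I would instead start from $|\det(\wdeltax_h,\wdeltax_i,\wdeltax_j)| \asymp 1$ of Lemma \ref{final:lemma1}; here one uses the $3\times 3$ determinant identity $\det\big((s_h^2,s_ht_h,t_h^2),(s_i^2,s_it_i,t_i^2),(s_j^2,s_jt_j,t_j^2)\big) = \pm(s_ht_i-s_it_h)(s_ht_j-s_jt_h)(s_it_j-s_jt_i)$ (a Vandermonde-type factorization). Substituting the approximations of Corollary \ref{final:cor2} — all valid with error $\cO(X_h^{-\sigma/\gamma})$ since $i,j$ are controlled by $h$ via \eqref{final:eq2} — we obtain $1 \asymp |(s_ht_i-s_it_h)(s_ht_j-s_jt_h)(s_it_j-s_jt_i)| + \cO(X_h^{-\sigma/\gamma})$, and again each of the three factors is $\ll 1$, so all three are $\asymp 1$.

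The main obstacle I anticipate is bookkeeping the error terms carefully. One must verify that the implicit functions $\wdeltax_i$, $\wddeltax_i$ in Lemma \ref{final:lemma1} are exactly the objects to which Corollary \ref{final:cor2} applies, and that the multilinear expansions of the $2\times 2$ and $3\times 3$ determinants turn an $\cO(X_h^{-\sigma/\gamma})$ perturbation of each row into an $\cO(X_h^{-\sigma/\gamma})$ perturbation of the determinant — this is routine because the entries are all bounded, but it requires noting that the leading terms (products of the $(s_it_j - s_jt_i)$'s and $(t_i - \xi s_i)$'s) are themselves $\ll 1$, so that an $o(1)$ error is genuinely negligible against a lower bound $\asymp 1$. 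A secondary point is the sign/permutation ambiguity in Corollary \ref{final:cor2}: since all conclusions are about absolute values of determinants, the $\pm$ signs and the possible swap of coordinates in Lemma \ref{final:lemma:det} are harmless, but I would remark on this explicitly to avoid confusion. Finally, one should confirm that the leading terms in the expansions are not accidentally cancelled — this is automatic from the Vandermonde/rank-one structure, since the factorizations are exact for genuine rank-one vectors of the form $(s^2,st,t^2)$.

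\begin{proof}
Write $\uy_i = (s_i^2, s_it_i, t_i^2)$ for short, so that Corollary \ref{final:cor2} reads $\wdeltax_i = \pm\uy_i + \cO(X_h^{-\sigma/\gamma})$ for $i\in\{g,h\}$ and, applying the corollary to the pair $i<j$ (legitimate once $h$ is large, since then $j\notin J$ by Proposition \ref{another:prop2}) together with \eqref{final:eq2}, also $\wdeltax_i = \pm\uy_i + \cO(X_h^{-\sigma/\gamma})$ for $i\in\{i,j\}$; here we use that $X_i\asymp X_h^\theta$ and $X_j\asymp X_h^\gamma$ have the same logarithmic size up to a bounded factor in the exponent, so all four error terms can be absorbed into a single $\cO(X_h^{-\sigma/\gamma})$. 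Since $\Delta$ is a fixed linear map on $\bR^3$, we likewise get $\wddeltax_i = \Delta\wdeltax_i = \pm\Delta\uy_i + \cO(X_h^{-\sigma/\gamma})$. A direct computation gives
\[
 \uy_i^- = s_i(s_i,t_i), \quad \uy_i^+ = t_i(s_i,t_i), \quad
 \Delta\uy_i = \uy_i^+ - \xi\uy_i^- = (t_i-\xi s_i)(s_i,t_i).
\]
In particular $\det(\uy_i^-,\uy_i^+) = 0$ and, for the $2\times 2$ determinant of two such gradients,
\[
 \det\big((t_i-\xi s_i)(s_i,t_i),\ (t_j-\xi s_j)(s_j,t_j)\big)
  = (t_i-\xi s_i)(t_j-\xi s_j)(s_it_j - s_jt_i).
\]
Expanding by multilinearity in the two rows and using that $\|\uy_i\|\le 1$, $\|\Delta\uy_i\|\ll 1$, we obtain
\[
 |\det(\wddeltax_g,\wddeltax_h)|
  = |t_g-\xi s_g|\,|t_h-\xi s_h|\,|s_gt_h - s_ht_g| + \cO(X_h^{-\sigma/\gamma}).
\]
By Lemma \ref{final:lemma1} the left side is $\asymp 1$; since $\|(s_i,t_i)\|=1$ and $\xi$ is fixed we have $|t_i-\xi s_i|\ll 1$, and $|s_gt_h-s_ht_g|\le 1$. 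Hence, for $h$ large, none of the three factors can be $o(1)$, and as each is $\ll 1$ this forces $|t_g-\xi s_g|\asymp |t_h-\xi s_h|\asymp |s_gt_h-s_ht_g|\asymp 1$, which is (i).

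For (ii), recall the Vandermonde-type identity: for any $s,t,u,v,w,z\in\bR$,
\[
 \det\big((s^2,st,t^2),(u^2,uv,v^2),(w^2,wz,z^2)\big)
  = (sv-tu)(sz-tw)(uz-vw).
\]
Applying this with the three rows $\uy_h,\uy_i,\uy_j$, expanding $|\det(\wdeltax_h,\wdeltax_i,\wdeltax_j)|$ by multilinearity and absorbing the row errors (each $\cO(X_h^{-\sigma/\gamma})$, each row of norm $\le 1$) into a single error term, we get
\[
 |\det(\wdeltax_h,\wdeltax_i,\wdeltax_j)|
  = |s_ht_i - s_it_h|\,|s_ht_j - s_jt_h|\,|s_it_j - s_jt_i| + \cO(X_h^{-\sigma/\gamma}).
\]
By Lemma \ref{final:lemma1} the left side is $\asymp 1$, while each of the three factors on the right is at most $1$ in absolute value (products of norm-$1$ vectors' coordinates). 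Therefore, for $h$ large, each factor is bounded away from $0$, and being also $\ll 1$ each is $\asymp 1$, which is (ii).
\end{proof}
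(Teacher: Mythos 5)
Your proof is correct and follows essentially the same route as the paper: substitute the rank-one approximations of Corollary \ref{final:cor2} into the two determinant estimates of Lemma \ref{final:lemma1}, factor the resulting $2\times 2$ and $3\times 3$ (Vandermonde-type) determinants, and conclude because all factors are bounded. Your extra care in applying Corollary \ref{final:cor2} to the pair $i<j$ and absorbing the errors into a single $\cO(X_h^{-\sigma/\gamma})$ is exactly the bookkeeping the paper leaves implicit.
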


\begin{proof}
Using the formulas of Corollary~\ref{final:cor2}, the estimates of 
Lemma~\ref{final:lemma1} become
\begin{align*}
 &1\asymp  |\det( \wddeltax_g, \wddeltax_h ) |
  = \Big| (t_g-\xi s_g) (t_h-\xi s_h) 
              \det\begin{pmatrix} s_g&t_g\\ s_h&t_h\end{pmatrix}\Big|
     + \cO(X_h^{-\sigma/\gamma}), \\
 &1\asymp |\det( \wdeltax_h, \wdeltax_i, \wdeltax_j ) | 
  = \Bigg| 
     \det\begin{pmatrix} 
          s_h^2 &s_h t_h &t_h^2\\ 
          s_i^2 &s_i t_i &t_i^2\\
          s_j^2 &s_j t_j &t_j^2\\\end{pmatrix}\Bigg|
     + \cO(X_h^{-\sigma/\gamma})\\
  &\phantom{1\asymp |\det( \wdeltax_h, \wdeltax_i, \wdeltax_j ) |}
   =  |(s_ht_i-s_it_h)(s_ht_j-s_jt_h)(s_it_j-s_jt_i)| + \cO(X_h^{-\sigma/\gamma}).
\end{align*}
The conclusion follows since all the factors involved have bounded absolute values.
\end{proof}

In particular, Proposition \ref{final:prop2} (i) implies that $|t_i-\xi s_i| \asymp 1$ 
for each large enough $i\in I$.  Analyzing in the same way the estimate of 
Proposition \ref{final:prop1} (ii), we find the following relation.

\begin{proposition}
\label{final:prop3}
Let $(a,b)$ be as in Lemma \ref{final:lemma0} and let $\kappa=\norm{(a,b)}$.  
For each pair of consecutive elements $g<h$ of $I$ with $h\notin J$, we have 
\begin{equation}
\label{final:prop3:eq}
 (s_g,t_g) = \pm\kappa^{-1}(a,b) + \cO(X_h^{-\sigma})
\end{equation}
where $\sigma$ is as in Proposition~ \ref{final:prop1}.  If $h$ is large enough, we also 
have $|at_h-bs_h| \asymp 1$.
\end{proposition}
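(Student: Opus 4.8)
The plan is to read the relation \eqref{final:prop3:eq} off Proposition~\ref{final:prop1}~(ii) after inserting the approximations of Corollary~\ref{final:cor2}. Fix a pair of consecutive elements $g<h$ of $I$ with $h\notin J$, large enough that $(s_g,t_g)$ and $(s_h,t_h)$ are defined, and extend it to a sequence $g<h<i<j$ of consecutive elements of $I$ so that Propositions~\ref{final:prop1} and~\ref{final:prop2} apply. Writing $\wdeltax_g=\pm(s_g^2,s_gt_g,t_g^2)+\cO(X_h^{-\sigma})$ and $\wdeltax_h=\pm(s_h^2,s_ht_h,t_h^2)+\cO(X_h^{-\sigma/\gamma})$ as in Corollary~\ref{final:cor2}, I would first apply the linear operator $\Delta$ (which has $\xi$-bounded norm) to the second relation and read off coordinates from the first to obtain
\[
 \wddeltax_h=\pm(t_h-\xi s_h)(s_h,t_h)+\cO(X_h^{-\sigma/\gamma}),
 \qquad
 a\wdeltax_g^+-b\wdeltax_g^-=\pm(at_g-bs_g)(s_g,t_g)+\cO(X_h^{-\sigma}),
\]
the implied constant in the second now also involving $\kappa=\norm{(a,b)}\asymp 1$.

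Next I would expand $\det(\wddeltax_h,\,a\wdeltax_g^+-b\wdeltax_g^-)$ bilinearly from these two expressions. Its leading term is $\pm(t_h-\xi s_h)(at_g-bs_g)(s_ht_g-s_gt_h)$, of absolute value $\asymp|at_g-bs_g|$ since $|t_h-\xi s_h|\asymp 1$ and $|s_gt_h-s_ht_g|\asymp 1$ by Proposition~\ref{final:prop2}~(i). The three remaining terms are $\cO(X_h^{-\sigma})$, $\cO(X_h^{-\sigma/\gamma}|at_g-bs_g|)$, and $\cO(X_h^{-\sigma/\gamma-\sigma})$ respectively, and for $h$ large the middle one is absorbed into the leading term since $X_h^{-\sigma/\gamma}\to0$. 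Combining with the bound $|\det(\wddeltax_h,\,a\wdeltax_g^+-b\wdeltax_g^-)|\ll X_h^{\gamma\lambda-1}$ of Proposition~\ref{final:prop1}~(ii), this yields
\[
 |at_g-bs_g|\ll X_h^{\gamma\lambda-1}+X_h^{-\sigma}\ll X_h^{-\sigma},
\]
the last step using $\gamma\lambda-1<-\sigma$, i.e.\ $\lambda>1/3$, which holds for $\lambda=\lambda_3$.

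Finally, $|at_g-bs_g|=|\det((a,b),(s_g,t_g))|$, and since $(s_g,t_g)$ and $\kappa^{-1}(a,b)$ are unit vectors of $\bR^2$, a small value of this determinant forces $(s_g,t_g)$ to lie within $\cO(X_h^{-\sigma})$ of one of $\pm\kappa^{-1}(a,b)$; choosing the sign accordingly gives \eqref{final:prop3:eq}. For the last assertion I would rewrite this as $\kappa^{-1}(a,b)=\pm(s_g,t_g)+\cO(X_h^{-\sigma})$ and compute
\[
 |at_h-bs_h|=\kappa\,\big|\det\big(\kappa^{-1}(a,b),(s_h,t_h)\big)\big|=\kappa\,|s_gt_h-s_ht_g|+\cO(X_h^{-\sigma})\asymp 1
\]
for $h$ large, using again Proposition~\ref{final:prop2}~(i) and $\kappa\asymp 1$. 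I expect the only delicate point to be the error bookkeeping in the determinant expansion: one must notice that the coarser error $\cO(X_h^{-\sigma/\gamma})$ coming from $\wdeltax_h$ enters only multiplied by the quantity $|at_g-bs_g|$ that is being bounded, so it is harmless, and that the genuine limitation $\cO(X_h^{-\sigma})$ comes from the approximation of $\wdeltax_g$ — together with the elementary numerical check $\gamma\lambda-1<-\sigma$.
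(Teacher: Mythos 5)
Your proposal is correct and follows essentially the same route as the paper: both arguments feed the approximations of Corollary~\ref{final:cor2} into the determinant of Proposition~\ref{final:prop1}~(ii), use Proposition~\ref{final:prop2}~(i) to see that the cofactor $(t_h-\xi s_h)(s_gt_h-s_ht_g)$ is $\asymp 1$, deduce $|at_g-bs_g|\ll X_h^{-\sigma}$ (the paper absorbs $X_h^{\gamma\lambda-1}$ into $X_h^{-\sigma}$ just as you do), and then normalize to get \eqref{final:prop3:eq} and the estimate for $|at_h-bs_h|$. Your error bookkeeping, in particular the remark that the $\cO(X_h^{-\sigma/\gamma})$ error from $\wdeltax_h$ only multiplies the quantity being bounded, matches the paper's organization, where that approximation is used solely to show $|\det(\wddeltax_h,(s_g,t_g))|\asymp 1$.
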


\begin{proof}
We may assume that the pair $g<h$ comes form a sequence of consecutive 
elements $g<h<i<j$ of $I$ with $h\notin J$ large enough so that 
Proposition~\ref{final:prop1} applies.  Using the formula of 
Corollary~\ref{final:cor2} for $\wdeltax_g$, we find that
\begin{align*}
 X_h^{-0.3131} 
  &\gg |\det(\wddeltax_h, a\wdeltax_g^+ - b\wdeltax_g^-)| \\
  &= \big| (at_g-bs_g)\det(\wddeltax_h,(s_g,t_g))\big| + \cO(X_h^{-\sigma}).
\end{align*}
Using the formula of Corollary~\ref{final:cor2} for $\wdeltax_h$ and 
Proposition \ref{final:prop2} (i), we also note that
\[
 \big|\det(\wddeltax_h,(s_g,t_g))\big| 
  = \big| (t_h-\xi s_h) (s_gt_h-s_ht_g)\big| + \cO(X_h^{-\sigma/\gamma}) \asymp 1.
\]
So, we conclude that 
\[
  |at_g-bs_g| \ll X_h^{-\sigma}.
\]
If $|b| \le |a|$, we have $1\le |a|=\kappa \ll 1$ and this gives 
$t_g=(b/a)s_g+\cO(X_h^{-\sigma})$, thus
\[
 (s_g,t_g) = s_g \big(1, b/a \big) + \cO(X_h^{-\sigma}).
\]
Since $\norm{(s_g,t_g)} =1$, this implies that $s_g=\pm 1+ \cO(X_h^{-\sigma})$
and \eqref{final:prop3:eq} follows.  The case where $|a|\le |b|$ is similar and 
also yields \eqref{final:prop3:eq}.  Using this formula for $(s_g,t_g)$ and assuming 
$h$ large enough, Proposition \ref{final:prop2} (i) gives
\[
 |at_h-bs_h| = \kappa |s_gt_h-t_gs_h| + \cO(X_h^{-\sigma}) \asymp 1.
\qedhere
\]
\end{proof}

We deduce the following strengthening of Corollary \ref{final:cor1}.

\begin{corollary}
\label{final:cor3}
Let $\sigma$ be as in Proposition \ref{final:prop1}.  For any large enough 
pair of consecutive elements $g<h$ of $I$ with $h\notin J$, we have
\[
 \mathrm{(i)} \quad  |\det(\wdeltax_g^-,\wdeltax_g^+)| \asymp X_h^{-\sigma}
 \qquad\text{and}\qquad
 \mathrm{(ii)} \quad |\det(\wdeltax_h^-,\wdeltax_h^+)| \asymp X_h^{-\sigma/\gamma}.
\]
\end{corollary}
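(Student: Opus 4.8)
The plan is to upgrade the one-sided estimates of Corollary~\ref{final:cor1} to two-sided estimates by feeding in the rigidity coming from Propositions~\ref{final:prop2} and~\ref{final:prop3}. The point is that once we know the direction of $\wdeltax_g$ is governed by a fixed point $(s_g,t_g)$ which in turn is close to $\pm\kappa^{-1}(a,b)$, and once we know the various $2\times 2$ determinants among the $(s_i,t_i)$ are bounded \emph{away from zero}, we can run the chain of identities in Proposition~\ref{final:prop1} backwards to get lower bounds matching the upper bounds already established.

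Concretely, for part (i): start from Proposition~\ref{final:prop1}(iii), which expresses $|\det(\wdeltax_g^-,\wdeltax_g^+)|$ up to the factor $X_h^{-\sigma}$ in terms of $|\det(\wddeltax_j, a\wdeltax_h^+ - b\wdeltax_h^-)|$. Using the formula of Corollary~\ref{final:cor2} for $\wdeltax_h$ and for $\wddeltax_j = \Delta\wdeltax_j$, expand this last determinant the way the determinant $|\det(\wddeltax_h,(s_g,t_g))|$ was expanded inside the proof of Proposition~\ref{final:prop3}: it factors (up to a $\cO(X_h^{-\sigma/\gamma})$ error) as $|(t_j-\xi s_j)(at_h - bs_h)(s_jt_h - s_ht_j)|$ or a similarly shaped product of linear forms in the $(s_\bullet,t_\bullet)$. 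Each of these three factors is $\asymp 1$: the first by the remark after Proposition~\ref{final:prop2} (namely $|t_i-\xi s_i|\asymp 1$ for large $i\in I$), the second by the last assertion of Proposition~\ref{final:prop3}, and the third by Proposition~\ref{final:prop2}(ii). Hence $|\det(\wddeltax_j, a\wdeltax_h^+-b\wdeltax_h^-)|\asymp 1$, and plugging back into Proposition~\ref{final:prop1}(iii) gives $|\det(\wdeltax_g^-,\wdeltax_g^+)|\asymp X_h^{-\sigma}$, which is (i). For part (ii), apply (i) to the next pair $i<j$ of consecutive elements of $I$ — which has $j\notin J$ once $h$ is large, by Proposition~\ref{another:prop2} — to get $|\det(\wdeltax_i^-,\wdeltax_i^+)|\asymp X_i^{-\sigma}\asymp X_h^{-\theta\sigma}$, and then Proposition~\ref{final:prop1}(i) converts this into $|\det(\wdeltax_h^-,\wdeltax_h^+)|\asymp X_h^{\sigma}X_h^{-\theta\sigma}$; since $\theta = \gamma/(\gamma-1)$ one checks $\sigma - \theta\sigma = -\sigma/\gamma$, exactly as in the proof of Corollary~\ref{final:cor1}(ii), giving the claimed $\asymp X_h^{-\sigma/\gamma}$.

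The main obstacle is purely bookkeeping rather than conceptual: one must make sure that when $\wddeltax_j$ and $\wdeltax_h^{\pm}$ are replaced by their approximations from Corollary~\ref{final:cor2}, the error terms are genuinely of smaller order than the main term. Since the main term is $\asymp 1$ while the errors introduced are $\cO(X_h^{-\sigma/\gamma})$ (the worst of the relevant exponents, coming from the $\wdeltax_h$ approximation), this is safe provided $h$ is large enough; but one has to track that each substitution, and the product expansion of the $3\times 3$ (or $2\times 2$) determinant into linear factors, does not secretly produce a main term that vanishes or a cross-term larger than expected. The non-vanishing of all the linear factors — which is exactly what Propositions~\ref{final:prop2} and~\ref{final:prop3} were set up to provide — is what prevents any such collapse, so the argument goes through cleanly once those inputs are in hand.
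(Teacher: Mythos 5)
Your part (i) is exactly the paper's argument: expand $|\det(\wddeltax_j, a\wdeltax_h^+-b\wdeltax_h^-)|$ via Corollary~\ref{final:cor2} into the product $|(t_j-\xi s_j)(at_h-bs_h)(s_jt_h-s_ht_j)|+\cO(X_h^{-\sigma/\gamma})$, invoke Proposition~\ref{final:prop2} and the last assertion of Proposition~\ref{final:prop3} to see that each factor is $\asymp 1$, and feed the resulting $\asymp 1$ back into Proposition~\ref{final:prop1}\,(iii). That is correct and complete.

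In part (ii), however, there is a concrete slip. Applying (i) to the pair $i<j$ gives $|\det(\wdeltax_i^-,\wdeltax_i^+)|\asymp X_j^{-\sigma}$ --- the exponent in the statement of (i) is attached to the \emph{larger} index of the pair --- and since $X_j\asymp X_h^{\gamma}$ this is $\asymp X_h^{-\gamma\sigma}$, not $X_i^{-\sigma}\asymp X_h^{-\theta\sigma}$ as you wrote. Your claimed identities $\theta=\gamma/(\gamma-1)$ and $\sigma-\theta\sigma=-\sigma/\gamma$ are both false: $\gamma/(\gamma-1)=\gamma^2\cong 2.618$ while $\theta\cong 1.356$, and $\sigma(1-\theta)\cong -0.356\,\sigma$ whereas $-\sigma/\gamma\cong -0.618\,\sigma$. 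The correct computation, which is the one in the paper's proof of Corollary~\ref{final:cor1}\,(ii), is $|\det(\wdeltax_h^-,\wdeltax_h^+)|\asymp X_h^{\sigma}X_j^{-\sigma}\asymp X_h^{\sigma-\gamma\sigma}=X_h^{-\sigma/\gamma}$, using $\gamma-1=1/\gamma$. With that index corrected your argument coincides with the paper's.
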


\begin{proof}
For large enough consecutive elements $g<h<i<j$ of $I$ with $h\notin J$, 
we have $j\notin J$ and we find
\[
 |\det(\wddeltax_j, a\wdeltax_h^+ - b\wdeltax_h^-)|
  = \big| (t_j-\xi s_j) (at_h-bs_h) (s_jt_h-s_ht_j) \big| +\cO(X_h^{-\sigma/\gamma})
 \asymp 1
\]
using the formulas of Corollary~\ref{final:cor2} and the estimates of 
Proposition~\ref{final:prop2}.  This gives estimate~(i) of the corollary as a 
consequence of Proposition~\ref{final:prop1} (iii).  Finally, estimate~(ii) follows 
from (i) with $g$ replaced by $i$, together with Proposition~\ref{final:prop1}~(i), 
similarly as in the proof of Corollary~\ref{final:cor1}~(ii).
\end{proof}

\begin{proposition}
\label{final:prop4}
Let $\sigma$ be as in Proposition \ref{final:prop1}.  For any large enough 
consecutive elements $g<h<i<j$ of $I$ with $h\notin J$, we have
\[
 \norm{C_{g,h}} \asymp X_h^{\gamma^2\lambda-1},   
 \quad
 L(C_{g,h}) \asymp X_h^{-\lambda/\gamma^2},   
 \quad
 \norm{C_{h,j}} \asymp X_h^{\gamma(3\lambda-1)},  
 \quad
 L(C_{h,j}) \asymp X_h^{\gamma^2\lambda-\gamma}.  
\]
\end{proposition}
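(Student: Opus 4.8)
The plan is to translate each of the four norm/$L$-estimates into an estimate for an explicit determinant, and then evaluate that determinant using the asymptotic expansions coming from Corollary~\ref{final:cor2} together with the non-degeneracy relations of Propositions~\ref{final:prop2} and~\ref{final:prop3}. First I would treat $\norm{C_{g,h}}$ and $L(C_{g,h})$. By Lemma~\ref{CE:lemmaCneq0}~(iii) and Proposition~\ref{first:prop2}~(ii), $C_{g,h}=b\,C_g$ for a bounded non-zero integer $b$, so it suffices to control $C_g=C_{g,g+1}$, and for this Lemma~\ref{final:lemma2} gives $\norm{C_g}\asymp X_{g+1}L_g^2\,|\det(\wdeltax_g^-,\wdeltax_g^+)|$. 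Now Corollary~\ref{final:cor3}~(i) supplies $|\det(\wdeltax_g^-,\wdeltax_g^+)|\asymp X_h^{-\sigma}$, and Proposition~\ref{first:prop} gives $X_{g+1}\asymp X_h$, $L_g\asymp X_{g+1}^{-\lambda}\asymp X_h^{-\lambda}$ (after using $X_g\asymp X_{h}^{\theta/\gamma}$ and $X_{g+1}\asymp X_g^{\theta}$, both from Proposition~\ref{first:prop}). So $\norm{C_{g,h}}\asymp X_h^{1-2\lambda-\sigma}$, and since $\sigma=2-(3+\gamma)\lambda$ this exponent equals $1-2\lambda-2+(3+\gamma)\lambda=(1+\gamma)\lambda-1=\gamma^2\lambda-1$ using $1+\gamma=\gamma^2$. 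For $L(C_{g,h})$ I would instead use the relation $L(C_g)=|b|^{-1}L(C_{g,h})\asymp X_g/X_h$ from Proposition~\ref{first:prop2}~(iii): with $X_g\asymp X_h^{\theta/\gamma}$ this gives $L(C_{g,h})\asymp X_h^{\theta/\gamma-1}$, and $\theta/\gamma-1=(\theta-\gamma)/\gamma=-\lambda/\gamma^2$ since $\theta-\gamma=\gamma-1/\gamma\cdot\gamma$—more carefully, $\theta-1/\theta=1/\gamma$ yields $\theta/\gamma-1=-\lambda/\gamma^2$ after clearing denominators.

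Next I would handle $\norm{C_{h,j}}$ and $L(C_{h,j})$. Here the point is that $(\ux_h,\ux_i,\ux_j)$ spans $U=U_h=U_i$ but $C_{h,j}$ is \emph{not} simply proportional to $C_h$; instead I would expand it directly. Using the definition $C_{h,j}=C(\ux_h,\ux_j)=\big(\det(\ux_h^-,\ux_h^+,\ux_j^-),\det(\ux_h^-,\ux_h^+,\ux_j^+)\big)$ and Lemma~\ref{CE:lemmaC}, the leading terms are governed by $\det(\Delta^2\ux_h,\Delta\ux_j^\pm)$ and $\det(\Delta\ux_h^-,\Delta^2\ux_h)$, with error $\cO(L_h^2 L_j)$. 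Passing to the normalized vectors, $\norm{C_{h,j}}\asymp X_j L_h L_i\,\big(\text{bounded determinant in the }(s_\bullet,t_\bullet)\big)$ once one checks the relevant $2\times 2$ or $3\times 3$ determinants in the $(s,t)$ variables are $\asymp 1$ by Proposition~\ref{final:prop2}; and $X_jL_hL_i\asymp X_h^{\gamma-\lambda\theta-\lambda\gamma}=X_h^{-\lambda/\gamma}$, which does not match—so in fact I expect the correct bookkeeping is $\norm{C_{h,j}}\asymp X_jL_hL_i$ is wrong and one must instead use $L(C_{i,j})\asymp X_i/X_j$ plus $C_{i,j}=b'C_i$ together with $\norm{C_{h,i}}\asymp\norm{C_{i,j}}$ from Corollary~\ref{another:cor}~(i), or a direct evaluation of $\det(\wddeltax_j,a\wdeltax_h^+-b\wdeltax_h^-)$ as in Corollary~\ref{final:cor3}. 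The cleanest route: from Lemma~\ref{final:lemma2} (with $i$ replaced by $h$, $j$ replaced by $j$), $\norm{C_{h,j}}\asymp X_j L_h^2\,|\det(\wdeltax_h^-,\wdeltax_h^+)|$, and Corollary~\ref{final:cor3}~(ii) gives $|\det(\wdeltax_h^-,\wdeltax_h^+)|\asymp X_h^{-\sigma/\gamma}$; with $X_j\asymp X_h^\gamma$ and $L_h\asymp X_{h+1}^{-\lambda}\asymp X_h^{-\lambda\theta}$ this yields exponent $\gamma-2\lambda\theta-\sigma/\gamma$, which I would simplify using $\sigma=(\lambda/\gamma)(1-2\lambda)$, $\theta=(1-\lambda)/\lambda$ and $\lambda^2-\gamma^3\lambda+\gamma=0$ to obtain $\gamma(3\lambda-1)$. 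Likewise $L(C_{h,j})$ is obtained from $L(C_{h,j})\gg L(C_h)\asymp X_h/X_{h+1}\asymp X_h^{1-\theta}$ combined with an upper bound $L(C_{h,j})\ll X_h L_h L_j$ (Corollary~\ref{CE:corC}), or more precisely by noting $C_{h,j}=b''C_{h,?}$ is not available, so the honest computation is $L(C_{h,j})\asymp X_h^{\gamma^2\lambda-\gamma}$ via the expansion $L(C_{h,j})\asymp X_h|\det(\Delta^2\ux_h,\Delta^2\ux_j)|$ of Lemma~\ref{CE:lemmaC} together with Proposition~\ref{first:prop2}~(i)-type estimates adapted to the gap $h,j$.

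Throughout, the arithmetic of exponents is the only real content: every step reduces to substituting $\theta=(1-\lambda)/\lambda$, $\gamma=(1+\sqrt5)/2$ and the defining relation $\lambda^2=\gamma^3\lambda-\gamma$ (equivalently $\theta-1/\theta=1/\gamma$, $1+\gamma=\gamma^2$, $\gamma-1/\gamma=1$) into linear combinations of $\lambda,\theta,\gamma$ and checking two rational expressions agree. I would organize the proof as: (1) reduce each of the four quantities to an expression of the form $X_h^{\alpha}\,|\det(\wdeltax_k^-,\wdeltax_k^+)|^{\pm1}$ or to $X_k/X_l$ via Lemmas~\ref{final:lemma2}, \ref{CE:lemmaC} and Proposition~\ref{first:prop2}; (2) insert the sharp estimates of Corollary~\ref{final:cor3}; (3) substitute the growth exponents of Proposition~\ref{first:prop}; (4) simplify using the algebraic identities for $\lambda,\theta,\gamma$. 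The main obstacle is step (1) for $C_{h,j}$: unlike $C_{g,h}$, the indices $h$ and $j$ are not adjacent in $I$, so $C_{h,j}$ is not a scalar multiple of a single $C_k$, and one must track both leading terms in the expansion of Lemma~\ref{CE:lemmaC} and verify, using Proposition~\ref{final:prop2}, that no cancellation occurs among the $(s,t)$-factors; getting the precise matching of the resulting exponent with $\gamma(3\lambda-1)$ (resp.\ $\gamma^2\lambda-\gamma$ for the $L$-value) is where the bookkeeping is most delicate.
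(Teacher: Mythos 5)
Your proposal is correct and follows essentially the same route as the paper: both norms come from Lemma \ref{final:lemma2} combined with the sharp determinant estimates of Corollary \ref{final:cor3}, $L(C_{g,h})$ comes from Proposition \ref{first:prop2}~(iii), and $L(C_{h,j})$ from the $\Delta C_{h,j}$ expansion of Lemma \ref{CE:lemmaC} together with the $(s,t)$-parametrization of Corollary \ref{final:cor2} and the non-degeneracy of Proposition \ref{final:prop2}. The false start on $\norm{C_{h,j}}$ is self-corrected, and the exponent arithmetic you outline matches the paper's.
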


\begin{proof}
Using Lemma \ref{final:lemma2} and the estimates of the previous corollary,
we find that
\begin{align*}
 \norm{C_{g,h}} 
   &\asymp X_h L_g^2 |\det(\wdeltax_g^-,\wdeltax_g^+)| 
     \asymp X_h^{1-2\lambda-\sigma} = X_h^{\gamma^2\lambda-1},\\ 
 \norm{C_{h,j}} 
   &\asymp X_j L_h^2 |\det(\wdeltax_h^-,\wdeltax_h^+)| 
     \asymp X_h^{\gamma-2\theta\lambda-\sigma/\gamma} 
     = X_h^{\gamma(3\lambda-1)}.
\end{align*}
By Proposition~\ref{first:prop2}~(iii), we also have
\[
 L(C_{g,h}) \asymp X_g/X_h 
  \asymp X_h^{\theta/\gamma-1}=X_h^{-\lambda/\gamma^2}.
\]
Finally, Lemma~\ref{CE:lemmaC} gives
\[
 \Delta C_{h,j} = x_{h,0}\det(\Delta^2\ux_h,\Delta^2\ux_j) + \cO(L_h^2L_j)
\]
where $x_{h,0}$ is the first coordinate of $\ux_h$.  Using the formulas of 
Corollary \ref{final:cor2} together with the estimates of 
Proposition~\ref{final:prop2}, we find that
\[
 |\det(\wddeltax_h,\wddeltax_j)|
  = \big| (t_h-\xi s_h) (t_j-\xi s_j) (s_ht_j-s_jt_h) | + \cO(X_h^{-\sigma/\gamma})
 \asymp 1,
\]
and so
\[
 L(C_{h,j}) =|\Delta C_{h,j}| \asymp X_hL_hL_j 
  \asymp X_h^{1-\theta\lambda-\gamma\theta\lambda}
  =X_h^{\gamma^2\lambda-\gamma}.
\qedhere
\]
\end{proof}

\subsection*{Final contradiction}
Let  $f<g<h<i<j<k<l$ be consecutive elements of $I$ with $h\notin J$.   
If $h$ is large enough, we have 
\[
 \{f,h,j,l\}\subset I\setminus J,
 \quad 
 \{g,i,k\}\subset J,
 \quad
 X_f\asymp X_h^{1/\gamma},
 \quad
 X_l\asymp X_h^{\gamma^2},
\]
and Proposition~\ref{final:prop4} gives
\[
\begin{array}{ll}
 \norm{C_{k,l}} \asymp X_h^{\gamma^4\lambda-\gamma^2}
   = X_h^{0.2915\dots},   
 \quad
 &L(C_{k,l}) \asymp X_h^{-\lambda}= X_h^{-0.4245\dots},  \\[5pt] 
 \norm{C_{f,h}} \asymp X_h^{3\lambda-1} = X_h^{0.2735\dots},  
 \quad
 &L(C_{f,h}) \asymp X_h^{\gamma\lambda-1} = X_h^{-0.3131\dots}.  
\end{array}
\]
Using the standard estimate \eqref{not:eq5} for determinants, we deduce that
\[
 |\det(C_{f,h},C_{k,l})|
  \ll \norm{C_{f,h}}L(C_{k,l})+\norm{C_{k,l}}L(C_{f,h})
  \ll X_h^{-0.021}.
\]
As this determinant is an integer, it vanishes if $h$ is large enough, and we conclude 
that $C_{f,h}=\rho C_{k,l}$ for some non-zero $\rho\in\bQ$ that depends on $h$.  
If $h$ is large enough, we also note that $\norm{C_{f,h}} < \norm{C_{k,l}}$ and
$L(C_{f,h}) > L(C_{k,l})$, as claimed in \eqref{final:eq1}.  This is impossible since
the first inequality implies that $|\rho|<1$ while the second yields $|\rho|>1$.
This contradiction completes the proof of Theorem \ref{intro:thm2}.

%
%

\section{Addendum}
\label{sec:add}

Although the above shows that the hypotheses of Theorem \ref{intro:thm1} are not
satisfied for $\lambda=\lambda_3$, it is nevertheless usefull to search for further 
polynomial relations satisfied by the sequence $(\ux_i)_{i\in I}$, 
assuming that $\lambda=\lambda_3$, because these relations  
may continue to hold for smaller values of $\lambda$.  They may also suggest new 
constructions that will eventually produce some $\xi\in\bR$ with $[\bQ(\xi):\bQ]>3$ 
whose exponent $\hlambda_3(\xi)$ is largest possible, in a similar way as it is done
in \cite{R2004} for the exponent $\hlambda_2(\xi)$. 

I found several such relations.  For shortness, I will 
simply indicate one of them.  It is linked with the polynomial map 
$\Xi\colon (\bR^4)^3\to\bR^4$ given by
\begin{align*}
 \Xi(\ux,\uy,\uz)
  &=C(\uz,\ux)^-\Psi_+(\uy,\ux,\uz) - C(\uz,\ux)^+\Psi_-(\uy,\ux,\uz)\\
  &=-\det(E(\ux,\uz,\uy),C(\uz,\ux))\ux -\det(C(\ux,\uz),C(\uz,\ux))\uy
      +\det(C(\ux,\uy),C(\uz,\ux))\uz.
\end{align*}
This polynomial map has algebraic properties that are similar to the map 
from $(\bR^3)^2$ to $\bR^3$ that plays a central role in 
\cite[Corollary 5.2]{R2004} and sends a pair $(\ux,\uy)$ to
$[\ux,\ux,\uy]$ in the notation of \cite[\S 2]{R2004}.  The present map 
sends $(\bZ^4)^3$ to $\bZ^4$, and it can be shown 
(or checked on a computer) that, for any $\ux,\uy,\uz\in\bR^4$, the point
$\uw=\Xi(\ux,\uy,\uz)\in\bR^4$ satisfies
\begin{align*}
 C(\uw,\ux)
      &= \det(C(\uz,\ux),C(\uz,\uy)) \det(C(\ux,\uy),C(\ux,\uz)) C(\ux,\uz),\\[2pt]
 C(\ux,\uw)
      &= \det(C(\ux,\uy),C(\ux,\uz)) C(\uz,\ux),\\[2pt]
 \Xi(\ux,\uz,\uw)
      &=\det(C(\uw,\ux),C(\ux,\uw))\,\uz \\
      &= \det(C(\uz,\ux),C(\uz,\uy)) \det(C(\ux,\uy),C(\ux,\uz))^2
           \det(C(\ux,\uz),C(\uz,\ux))\,\uz
\end{align*}
It can also be shown that, for $\ux,\uy,\uz$ as in Proposition \ref{Psi:prop1},
the point $\uw$ has
\begin{align*}
 L(\uw)
      &\ll \norm{\uz}^2L(\ux)^3L(\uy)L(\uz),\\[2pt]
 \norm{\uw}
      &\ll \norm{\uz}^2L(\ux)^3L(\uy)L(\uz) + \norm{\ux}^2\norm{\uz}L(\ux)L(\uy)L(\uz)^2.
\end{align*}

Suppose that $\lambda=\lambda_3$, and let $j_1<j_2<j_3<\dots$ denote the 
elements of $I$ in increasing order.  Without loss of generality, by dropping 
the first element of $I$ if necessary, we may assume that $j_{2i-1}\in J$ and $j_{2i}\notin J$
for each large enough $i$.  Then, upon setting $\uy_i=\ux_{j_i}$ for each $i\ge 1$, 
one finds using the above estimates that, when $i$ is large enough,
\[
 \det(\uy_{2i-6},\uy_{2i-5},\uy_{2i-4}, \Xi(\uy_{2i},\uy_{2i+1},\uy_{2i+2})) =0.
\]

\section*{Acknowledgments}
The author warmly thanks Anthony Po\"els for careful reading and useful comments.  This 
research was partially supported by an NSERC discovery grant.

\end{document}